\newtheorem{theorem}{Theorem}[subsection]
\newtheorem{lemma}[theorem]{Lemma}
\newtheorem{cor}[theorem]{Corollary}
\newtheorem{proposition}[theorem]{Proposition}
\theoremstyle{remark}
\newtheorem{remark}[theorem]{Remark}
\numberwithin{equation}{subsection}
\newcommand{\K}{\ensuremath{\mathbb{K}}}
\gdef\mnote#1{\marginpar{\tiny
 \tolerance\@M\spaceskip2.6\p@ plus10\p@ minus.9\p@\rm#1}}}
\def\Dg:{\endgraf{\bf Dg:\enspace}\ignorespaces}
\let\Bbb\mathbb
\let\Cal\mathcal
\def\sm{\smallsetminus}
\newcommand{\be}{\begin{equation}}
\newcommand{\ee}{\end{equation}}
\let\ge\geqslant 
\let\le\leqslant 
\let\la\langle
\let\ra\rangle
\let\til\widetilde
\def\Im{\operatorname{Im}}
\def\Z{\Bbb Z}
\def\R{\Bbb R}
\def\C{\Bbb C}
\def\T{\Bbb T}
\def\S{\Bbb S}
\def\PP{\Bbb P}
\def\Rp#1{\Bbb{RP}^{#1}}
\def\Pin{\operatorname{Pin}}
\def\rk{\operatorname{rk}}
\def\v{\frak v}
\def\conj{\operatorname{conj}}
\def\dsum{\bot\!\!\!\bot}
\def\bhv{\operatorname{\varUpsilon}}
\def\q{\operatorname{\widehat q}}
\let\a=\alpha
\def\io{\lambda}
\def\e{\varepsilon}
\newcommand{\addresseshere}{%
  \enddoc@text\let\enddoc@text\relax
}
\title[Two kinds of real lines]
{Two kinds of real lines  on real del Pezzo surfaces of degree 1}
\author[]
{S.~Finashin, V.~Kharlamov}
\begin{document}
\begin{abstract}
We show how the real lines on a real del Pezzo surface of degree 1 can be split into two species, elliptic and hyperbolic,
via a certain distinguished, intrinsically defined, $\Pin^-$-structure on the real locus of the surface.
We prove that this splitting
is invariant under real automorphisms and real deformations of the surface, and that the
difference between the total numbers of hyperbolic and elliptic lines
is always equal to 16.
\end{abstract}

\maketitle

\setlength\epigraphwidth{.70\textwidth}
\epigraph{Z: How do you draw the line between algebra and topology?\\
L: Well, if it’s just turning the crank it’s algebra, but if it’s got an idea in it, it’s topology.
}{A dialog attributed to Oscar Zariski and Solomon Lefschetz. \\}

\section{Introduction}
In what follows, by a {\it real algebraic variety} (real surface, real curve, etc.) we mean a complex variety equipped
with an anti-holomorphic involution on its complex point set, $\conj:X\to X$. We denote by $X_\R$ the set of real points,
that is the fixed point set of $\conj$.

\subsection{The setting}\label{del_Pezzo}
By definition, a compact complex surface $X$ is a {\it del Pezzo surface of degree $1$},
if $X$ is non-singular and irreducible,
its anticanonical class $-K_X$ is ample,
and $K^2_X=1$. As is known, the image of $X$ by the bi-anticanonical
map $X\to \PP^3$ is then
a non-degenerate quadratic cone $Q\subset \PP^3$, with $X\to Q$ being
a double covering branched at the vertex of the cone
and along a non-singular sextic curve $C\subset Q$
(a transversal intersection of $Q$ with a cubic surface).
Thus, in particular,
each del Pezzo surface of degree 1 carries a non-trivial automorphism, known as  the {\it Bertini involution}, the deck transformation $\tau_X$ of the covering.

Any real structure, $\conj:X\to X$,
has to commute with $\tau_X$, and this gives another real structure $\tau_X\circ\conj=\conj\circ\tau_X$ called {\it Bertini dual} to $\conj$.
A pair of real structures, $\{\conj,\ \conj\circ\tau_X\}$,
will be called a {\it Bertini pair}. We generally use notation
$\conj^\pm$ for Bertini pairs
of real structures
and write $X^\pm$ for the corresponding pairs of real del Pezzo surfaces to simplify a more formal notation $(X,\conj^\pm)$.

The bi-anticanonical map projects the real loci $X^\pm_\R$
to two complementary domains $Q^\pm_\R\subset Q_\R$
on $Q_\R$, where the latter
is a cone over a real non-singular conic \emph{with non-empty real locus.}
The branching curve $C$ is real too,
and its real locus $C_\R$ together with the vertex of the cone form the common boundary of $Q^\pm_\R$.
Conversely, for any real non-singular curve $C\subset Q$ which is a transversal intersection of $Q$ with a real cubic surface, the surface $X$ which is the double covering of
$Q$ branched at the vertex of $Q$ and along $C$ is a del Pezzo surface of degree $1$ inheriting from $Q$ a pair of
Bertini dual real structures $\conj^\pm$.

\subsection{Main results}\label{main-intro}
As a starting point, we prove the following existence and uniqueness statement.

\begin{theorem}\label{main-1}
There is a unique way to supply each real del Pezzo surface $X$ of degree 1
with a $\Pin^-$-structure $\theta_{X}$ on $X_\R$, so that the following properties hold:
\begin{enumerate}
\item\label{invar}
$\theta_X$
is invariant under real automorphisms and real deformations of $X$. In particular, the associated quadratic function
$q_{X} : H_1(X_\R;\Z/2)\to\Z/4$ is preserved by the Bertini involution.
\item\label{vanish}
$q_{X}$ vanishes on each real vanishing cycle in $H_1(X_\R;\Z/2)$ and
takes value $1$ on the class dual to $w_1(X_\R)$.
\item\label{sym}
If $X^\pm$ is
a Bertini pair of real del Pezzo surfaces of degree 1, then
the corresponding quadratic functions $q_{X^\pm}$
take equal values on the elements represented
in $H_1(X^\pm_\R;\Z/2)$  by the connected components of $C_\R$.
\end{enumerate}
\end{theorem}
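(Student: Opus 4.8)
The plan is to trade the $\Pin^-$-structure for its quadratic refinement and to build the latter from the embedding $X_\R\subset X$. By the Kirby--Taylor correspondence, on any closed surface the assignment $\theta\mapsto q_\theta$ is a bijection between $\Pin^-$-structures and $\Z/4$-valued quadratic functions $q\colon H_1(\,\cdot\,;\Z/2)\to\Z/4$ refining the mod $2$ intersection form, meaning $q(a+b)=q(a)+q(b)+2(a\cdot b)$ with $a\cdot b\in\Z/2\hookrightarrow\Z/4$. Hence specifying $\theta_X$ is the same as specifying $q_X$, and conditions (\ref{vanish}), (\ref{sym}) become conditions on $q_X$ alone. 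I would first dispose of \emph{uniqueness}: once I show that the real vanishing cycles together with the class Poincar\'e dual to $w_1(X_\R)$ generate $H_1(X_\R;\Z/2)$, condition (\ref{vanish}) fixes $q_X$ on a generating set, and the quadratic relation then determines $q_X$ on all of $H_1(X_\R;\Z/2)$; so at most one such function can exist. The generation statement I expect to obtain from a Picard--Lefschetz / Zariski--van Kampen analysis of a real degeneration of degree $1$ del Pezzo surfaces, tracking which circles in $X_\R$ are swept out by real nodes.

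For \emph{existence} I would use that $X_\R$ is a \emph{characteristic} surface in the closed oriented $4$-manifold $X$: the normal bundle of the fixed locus of $\conj$ is identified with its tangent bundle, so $[X_\R]^2=\chi(X_\R)$ and $[X_\R]\cdot x\equiv x^2\pmod 2$ for all $x\in H_2(X;\Z/2)$. The Guillou--Marin/Kirby--Taylor construction therefore attaches to the pair $(X,X_\R)$ a canonical $\Pin^-$-structure and quadratic function; I take these to be $\theta_X$ and $q_X$. Verifying (\ref{vanish}) is then a local computation: a real vanishing cycle bounds a vanishing (Lefschetz) membrane in $X$ whose contribution to the Guillou--Marin form is trivial, giving $q_X=0$ on it, while the normalization $q_X=1$ on the dual of $w_1(X_\R)$ is read off from the self-intersection term of the same formula, whose parity is forced to be $1$ and whose sign picks the value $1$ rather than $3$.

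Property (\ref{invar}) should then be essentially formal. The Guillou--Marin data is natural with respect to diffeomorphisms of pairs, so any real automorphism of $X$ --- in particular the Bertini involution $\tau_X$, which commutes with $\conj$ and hence preserves $(X,X_\R)$ --- carries $\theta_X$ to itself and $q_X$ to $q_X$. Invariance under real deformations follows because $\Pin^-$-structures form a discrete set: over a connected real deformation family the characteristic surface and its induced structure vary continuously, hence are locally constant, hence constant.

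The step I expect to be the genuine obstacle is property (\ref{sym}), which compares the two \emph{different} surfaces $X^+_\R$ and $X^-_\R$ of a Bertini pair. Here the unifying feature is that both are real loci inside one and the same complex surface $X$, for the two commuting real structures $\conj^\pm$, and that their closures share the curve $C_\R$, which is the $\tau_X$-fixed branch locus of $X\to Q$. For a component $c$ of $C_\R$ I would evaluate the Guillou--Marin form of $c$ inside $X^+_\R$ and inside $X^-_\R$ by choosing a membrane in $X$ bounded by $c$ and made symmetric under $\tau_X$; since $\tau_X$ interchanges the normal directions of the two sheets over $Q^\pm_\R$ while fixing $c$, the framing and linking contributions computed on the two sides are forced to agree, yielding $q_{X^+}(c)=q_{X^-}(c)$. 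Controlling these membranes and the self-intersection terms near $C_\R$ uniformly for all components is the delicate point, and is where I would expect the real work to lie.
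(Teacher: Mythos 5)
Your outline (trade the $\Pin^-$-structure for its quadratic function, get uniqueness from a generation statement, get existence from a canonical construction attached to $(X,X_\R)$) is reasonable, but both of its key steps contain genuine errors.

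First, the existence step. The isomorphism between the normal bundle of $X_\R$ in $X$ and $TX_\R$ does give $[X_\R]\cdot[X_\R]\equiv\chi(X_\R)\pmod 2$, but it does \emph{not} give $[X_\R]\cdot x\equiv x\cdot x$ for all $x\in H_2(X;\Z/2)$: the correct general identity is $[X_\R]\cdot x\equiv x\cdot\conj_*x\pmod 2$, so $X_\R$ is characteristic only when $\conj_*$ acts trivially on $H_2(X;\Z/2)$, i.e.\ only for the maximal deformation class. A counterexample inside the class of surfaces at hand: blow up $\PP^2$ (standard real structure) at six generic real points and one pair of complex conjugate points. This is a real del Pezzo surface of degree $1$ with $X_\R\cong\Rp2\#3\T^2$, and the exceptional curve $E$ over an imaginary point is disjoint from $X_\R$, so $[X_\R]\cdot[E]=0$ while $[E]^2=-1$. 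Since the Guillou--Marin function is well defined (independent of the choice of membrane) precisely when the surface is characteristic, the object you propose to take as $\theta_X$ and $q_X$ does not exist for any non-maximal real structure, and the verifications of (\ref{vanish}), (\ref{invar}) and (\ref{sym}) built on it collapse with it. This is why the paper works elsewhere: it embeds $X$ as a hypersurface in $V=\PP(1,1,2,3)$, identifies $V_\R\sm\{\v_2\}$ with $\Rp2\times\R$, and descends one of the two $\Pin^-$-structures of this ambient $3$-manifold to $X_\R=\partial W$ via the outward coorientation. A Guillou--Marin-type construction could only be attempted in the quotient $X/\conj$, where the branch locus is characteristic, and that is not what you propose.

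Second, the uniqueness step. You use only property (\ref{vanish}) together with the claim that real vanishing cycles and the class dual to $w_1(X_\R)$ generate $H_1(X_\R;\Z/2)$. That claim holds for ten of the eleven deformation classes (essentially Lemma \ref{distinguished bases} of the paper), but it is \emph{false} for $X_\R=\Rp2\dsum\K$, and no Picard--Lefschetz analysis can repair it. In that class $H_1(X_\R;\Z/2)$ is $3$-dimensional, generated by the components $\Cal C_1,\Cal C_2,\Cal C_3$ of $C_\R$, with $\Cal C_1,\Cal C_2$ on the $\K$-component and $\Cal C_3$ on the $\Rp2$-component. The real locus of a real geometric vanishing cycle is a single circle, hence lies in one component of $X_\R$; combined with the vanishing requirement this leaves only $0$ and the meridian class $[\Cal C_1]+[\Cal C_2]$ as realizable classes (the bridge between $\Cal C_2$ and $\Cal C_3$ runs across an annulus of $Q^-_\R$ and is a real geometric vanishing cycle only for the Bertini-dual surface; for $X$ itself the corresponding vanishing class is $\conj_*$-invariant rather than anti-invariant). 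So vanishing cycles together with $w_1^*=[\Cal C_1]+[\Cal C_2]+[\Cal C_3]$ span only a $2$-dimensional subspace $U$, and property (\ref{vanish}) admits exactly two extensions to $H_1(X_\R;\Z/2)$. Moreover, both extensions satisfy property (\ref{invar}): they differ by $2\a$, where $\a$ is the unique nonzero functional annihilating $U$, and every real automorphism or monodromy preserves $U$ and hence fixes $\a$. Thus (\ref{invar}) and (\ref{vanish}) alone cannot single out $\theta_X$; it is property (\ref{sym}) --- which your uniqueness argument never invokes --- that breaks the tie, forcing $q(\Cal C_1)=1$, $q(\Cal C_2)=-1$, $q(\Cal C_3)=1$, and this is exactly how the paper's proof proceeds in this exceptional class.
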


By a line on $X$ we understand a {\it $(-1)$-curve}, that is a rational non-singular curve $D\subset X$ with $D^2=-1$,
and as a consequence with $D\cdot K_X=-1$.
If $X$ is a real del Pezzo surface of degree 1, the quadratic function $q_{X}$
as  in Theorem
\ref{main-1}
splits the real lines $l\subset X$ into {\it hyperbolic}, for which
$q_{X}(l_\R)=1\in\Z/4$, and {\it elliptic}, for which $q_X(l_\R)=-1\in\Z/4$.
The number of hyperbolic and elliptic real lines will be denoted by $h(X)$ and $e(X)$, respectively.

Our second goal is to prove
the following invariance of a combined count of lines.

\begin{theorem}\label{main-2}
For each Bertini pair $X^\pm$ of degree 1 real del Pezzo surfaces,
\begin{equation}
h(X^+)-e(X^+)+h(X^-)-e(X^-)=16.
\end{equation}
\end{theorem}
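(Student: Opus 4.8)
The plan is to reduce the signed total to a weighted count of Bertini pairs of lines and then to evaluate that count. First I would use the correspondence between the $240$ lines and the roots of the lattice $E_8 = K_X^\perp \subset \operatorname{Pic} X$: writing $l = -K_X + v$, the condition $l^2 = -1$ forces $v^2 = -2$, so lines correspond to roots, and the Bertini involution acts by $v \mapsto -v$, pairing the $240$ lines into $120$ Bertini pairs $\{l,\tau_X(l)\}$. A real structure $\conj^+$ induces an isometry $\phi$ of $E_8$, while $\conj^- = \tau_X\circ\conj^+$ induces $-\phi$; hence $l = -K_X+v$ is real on $X^+$ iff $\phi(v)=v$ and real on $X^-$ iff $\phi(v)=-v$. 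In either case both members of a Bertini pair are real on the same surface, and since $\tau_X$ is a real automorphism of each of $X^\pm$, property \eqref{invar} of Theorem \ref{main-1} gives $q_{X^\pm}(\tau_X(l)_\R)=q_{X^\pm}(l_\R)$: the two lines of a real Bertini pair share the same species. Each $\conj^+$-invariant Bertini pair therefore contributes exactly $\pm 2$ to the left-hand side, and the asserted identity is equivalent to the statement that the signed count of $\conj^+$-invariant Bertini pairs, each weighted by the common value $q=\pm1$ of its two lines, equals $8$.

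Next I would make the species sign geometrically explicit. Each Bertini pair is the preimage of a real plane section of the cone $Q$ that is a conic tritangent to the branch sextic $C$; the pair is real on $X^+$ (resp.\ $X^-$) according to whether the real locus of this conic lies in $Q^+_\R$ (resp.\ $Q^-_\R$), its three tangency points lying on the common boundary $C_\R$. Using the description of the real locus as a double $X^\pm_\R = D\,Q^\pm_\R$ branched along $C_\R$, I would compute $q_{X^\pm}(l_\R)$ as a local linking invariant of the $1$-sided circle $l_\R$ with the components of $C_\R$ that it meets. Here properties \eqref{vanish} and \eqref{sym} of Theorem \ref{main-1} are essential: the first normalises $q$ on the class dual to $w_1(X_\R)$ and kills the real vanishing cycles, fixing the ambiguity in the quadratic refinement, while the second forces $q_{X^+}$ and $q_{X^-}$ to agree on the classes of the components of $C_\R$, which is precisely the gluing needed to treat the contributions of $X^+$ and $X^-$ on an equal footing across the branch curve.

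With the species sign expressed through intersection data with $C_\R$, I would assemble the weighted count and evaluate it. Since each $q_{X^\pm}$, and hence each of $h,e$, is invariant under real deformation by property \eqref{invar}, and no line is created or destroyed away from the real discriminant, the weighted pair count is constant on each connected component of the space of real del Pezzo surfaces of degree $1$; it thus suffices to evaluate it on one representative per deformation class. I would carry this out either (a) lattice-theoretically, rewriting the weighted count as a sum over the roots lying in the eigen-sublattices $E_8\cap\ker(\phi\mp1)$ weighted by the species sign and showing, via the $\Pin^-$ normalisation of property \eqref{vanish}, that this sum collapses to $8$ independently of $\phi$; or (b) by exhibiting a convenient symmetric model whose real branch sextic and tritangent conics can be enumerated directly, and listing the $\pm1$ contributions explicitly.

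The main obstacle is the middle step: converting the abstract $\Pin^-$-quadratic value $q_{X^\pm}(l_\R)$ into a computable topological quantity, and then proving that the resulting signed count is $8$ uniformly. The delicate points are (i) controlling the defect between $q_{X^\pm}(l_\R)$ and the naive self-intersection $l^2\equiv -1\ (\mathrm{mod}\ 4)$, which is exactly the term distinguishing hyperbolic from elliptic lines, and (ii) showing that this defect, once summed over all invariant pairs with the $C_\R$-gluing of property \eqref{sym}, is insensitive to the real deformation class, so that the constant $8$, hence $16$, is forced rather than merely observed class by class.
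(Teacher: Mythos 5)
Your reduction is sound as far as it goes: the identification of lines with roots of $K_X^\perp\cong E_8$, the pairing into Bertini pairs, the fact that an invariant pair is real on exactly one of $X^\pm$ and that its two lines share the same species by Theorem \ref{main-1}(\ref{invar}) --- all of this matches the paper, and your reformulation ``signed count of invariant Bertini pairs equals $8$'' is precisely the paper's Theorem \ref{th-tritangents}. But the proposal stops where the proof has to begin: you never produce a mechanism that forces the count to be $8$, and you say so yourself (``the main obstacle\dots'', ``forced rather than merely observed''). Neither of your two routes is carried out: route (a) is essentially the assertion to be proved, restated; route (b) degenerates into a class-by-class enumeration over the deformation classes, which besides being unexecuted would require determining the species of every real line in every class --- the very thing the invariant is designed to compute.

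What is missing are the paper's two key steps. First, a \emph{per-surface} identity $h(X)-e(X)=2\rk R_\R(X)$ (Proposition \ref{general-count}), proved by root-system combinatorics: one chooses a Coxeter basis $\mathcal B$ of $R_\R(X)$ on which $\q=q_X\circ\bhv$ vanishes (Proposition \ref{special-basis}; this is the nontrivial geometric input, resting on the realization of the non-$A_1$ summands of $H_2^-(X)\cap K_X^\perp$ by real geometric vanishing cycles, Lemmas \ref{real-vanishing-generation} and \ref{distinguished bases}, with a separate argument for $\Rp2\dsum\K$, Lemma \ref{Klein-count-lemma}); then the quadratic relation $\q(f+e)=\q(f)+\q(e)+2f\cdot e$ shows that adjacent positive roots in the Hasse diagram carry opposite signs (Lemma \ref{cancelation-lemma}), and a matching of $R^+_\R(X)\sm\mathcal B$ into adjacent pairs (Lemma \ref{matching-lemma}) makes all contributions cancel except those of the $\rk R_\R(X)$ basis roots. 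Note that this per-surface quantity is \emph{not} deformation-independent (it ranges from $0$ to $16$ across the classes), so no uniform collapse to $8$ of the kind you hope for in route (a) exists surface by surface. Second, the constant $16$ comes from the hidden symmetry you mention but never exploit quantitatively: $\tau_X$ acts as $-\id$ on $K_X^\perp$, hence $\conj^-_*=-\conj^+_*$ on $K_X^\perp$, so $R_\R(X^+)$ and $R_\R(X^-)$ span complementary eigenlattices and $\rk R_\R(X^+)+\rk R_\R(X^-)=\rk E_8 = 8$. Without these two steps --- or a genuine substitute, for instance an actual execution of your linking-number computation combined with a direct enumeration of tritangents --- the proposal is a correct framing of the problem, not a proof.
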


As an intermediate statement, we establish the relation
$$h(X^\pm)-e(X^\pm)= 2 (\rk H^-_2(X^\pm) -1)$$ where $H^-_2(X^\pm)$ stands for the eigenlattice
$\ker (1+ \conj_*) : H_2(X)\to H_2(X)$ (see Proposition \ref{general-count}). This relation allows us to get also the individual values of $h(X^\pm), e(X^\pm)$
for each of deformation classes, see Table \ref{table-lines} in Subsection \ref{separate}. As can be seen from this table, the alternating sum in Theorem \ref{main-2} is the only (up to a constant factor) linear combination that does not depend on the deformation class.

\subsection{The context}
Existing literature on del Pezzo surfaces is huge and continues to grow due to recurrent involvement
of this class of surfaces in very different topics in mathematics and physics.  Numerology and combinatorics of line arrangements on del Pezzo surfaces occupy there a significant place.
Their properties over non-closed fields, and especially over the real field, always stood in sight. (An excellent summary of the real case is given in \cite{Russo}.)

For cubic surfaces (del Pezzo surfaces of degree 3), it was
B.~Segre  \cite{Segre} who discovered the division of real lines into elliptic and hyperbolic.
The fact that the difference $h-e=3$ (between the numbers of hyperbolic and elliptic lines) is
independent of a choice of a real non-singular cubic surface,
was not emphasised by Segre explicitly. It was only recently, that this remarkable rule has attracted attention and was explained
in the context of a new, integer valued, real enumerative geometry. It generated also a number of
generalizations, such as
counting of real lines, and real projective subspaces of dimension higher, on higher dimensional varieties  (and even not only over the real field),
see \cite{abundance}, \cite{3spaces}, \cite{indices}, \cite{Bach}, \cite{KW}, \cite{LV}, \cite{OT}. A distinctive new feature of a treatment which we present here is that the varieties under consideration (del Pezzo surfaces of degree 1)
have a  "hidden" symmetry (Bertini involution) preserved under deformations. It is this
feature that is responsible for
the invariance phenomenon
in Theorem \ref{main-2}.

For Segre, the division of real lines in species was one of the main tools in his calculation of the monodromy group action on the set of real lines
for each of the deformation classes of real cubic surfaces
(a mistake he made for one of the deformation classes was corrected in \cite{act}).
Our initial motivation came also from a study of monodromy groups
arising in the general theory of
real del Pezzo surfaces,
the subject to which
we plan to devote a separate paper.
Here,
instead, we indicate some other applications, as well as a few directions for generalizations.

Namely, in Section \ref{variations} we discuss a signed count of real
planes tritangent to real sextics $C\subset Q$.
Note that Theorem \ref{coorientation rule} and Proposition \ref{sign_rule}
presented in Section 4 provide not only "intrinsic" definitions of
hyperbolicity/ellipticity for real tritangent sections,
but also an alternative to our principal definition of hyperbolicity/ellipticity for lines.
Next,
in Section \ref{directions} we
count real conics 6-tangent to a real symmetric plane sextic,
and then briefly discuss
extending of the line counting to the case of nodal del Pezzo surfaces together with
the related wall crossing phenomena.

\subsection{Acknowledgements} Our special thanks go to R.~Rasdeaconu,
discussions with whom were among the motivations for this study.

For the artwork with the Hasse diagram of $E_8$ on Figure \ref{matching} we used Ringel's
sample in \cite{Ringel}, and for the diagrams on
Figure \ref{OtherHasse}, McKay's lie-hassse package \cite{McKay}.

The second author was partially funded by the grant ANR-18-CE40-0009 of {\it Agence Nationale de Recherche}.

\section{Preliminaries}
\subsection{Lines, roots, and vanishing cycles}\label{cycles-roots-lines}
Given a del Pezzo surface $X$ of degree 1, we denote by $L(X)$ the set of lines on $X$,
introduce the  set of  {\it exceptional classes}
$$I(X)=\{ v\in H_2(X)\,|\, v^2=v K_X=-1\}, $$
and consider the mapping $\io : L(X)\to I(X)$
that sends a line $l\in L(X)$ to its fundamental class $[l]$.
In the lattice $K_X^\perp=\{x\in H_2(X)\,|\,x\cdot K_X=0\}$, which is isomorphic to $E_8$,
we distinguish its root system
$$
R(X)=\{e\in K_X^\perp\,|\, e^2=-2 \}
$$
and relate it with $I(X)$ by a bijection
$$
 \phi: I(X)\to R(X),
v\mapsto -K_X-v.
$$

The following statement is a consequence of Riemann-Roch theorem, Serre duality, and the adjunction formula (see \cite{Manin}).

\begin{proposition}\label{bijection}
For any del Pezzo surface $X$, the set $L(X)$ is finite and the
map $\io : L(X)\to I(X)$ is a bijection.\qed
\end{proposition}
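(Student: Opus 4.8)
The plan is to prove the statement in three moves: the finiteness of $I(X)$, the injectivity of $\io$, and its surjectivity, with the last being the substantial part. Finiteness of $I(X)$ is immediate from the bijection $\phi\colon I(X)\to R(X)$ recorded just above: since $R(X)$ is the root system of $E_8$, it has exactly $240$ elements, so $|I(X)|=240$. Consequently, once $\io$ is shown to be a bijection, $L(X)$ is automatically finite, of the same cardinality.

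For injectivity I would argue that two distinct lines cannot share a fundamental class. If $l_1\neq l_2$ are distinct irreducible curves on $X$, then, having no common component, they satisfy $[l_1]\cdot[l_2]\ge 0$; but $[l_1]=[l_2]$ would force $[l_1]\cdot[l_2]=[l_1]^2=-1<0$, a contradiction. Hence $\io$ is injective.

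The hard part is surjectivity: given $v\in I(X)$, I must produce a line $l$ with $[l]=v$. Choose a divisor $D$ with $[D]=v$, so $D^2=D\cdot K_X=-1$. The key computation is Riemann--Roch,
$$\chi(\mathcal{O}_X(D))=\chi(\mathcal{O}_X)+\tfrac12\,D\cdot(D-K_X)=1+\tfrac12\bigl(-1-(-1)\bigr)=1,$$
using $\chi(\mathcal{O}_X)=1$ for the rational surface $X$. By Serre duality $h^2(\mathcal{O}_X(D))=h^0(\mathcal{O}_X(K_X-D))$; since $(K_X-D)\cdot(-K_X)=-1+(-1)=-2<0$ and $-K_X$ is ample, the class $K_X-D$ is not effective, whence $h^2(\mathcal{O}_X(D))=0$. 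Therefore $h^0(\mathcal{O}_X(D))=1+h^1(\mathcal{O}_X(D))\ge 1$, so $|D|$ contains an effective divisor $D'$.

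It remains to recognize $D'$ as a smooth rational $(-1)$-curve. Writing $D'=\sum a_iC_i$ with the $C_i$ irreducible and $a_i>0$, ampleness of $-K_X$ gives $C_i\cdot(-K_X)\ge 1$ for every $i$, while $D'\cdot(-K_X)=-D\cdot K_X=1$; this numerical constraint forces a single summand with $a_1=1$, so $D'=C_1$ is irreducible and reduced. Finally, the adjunction formula $2p_a(D')-2=D'^2+D'\cdot K_X=-2$ yields $p_a(D')=0$, so $D'$ is a smooth rational curve with $(D')^2=-1$, i.e. a line with $[D']=v$. I expect the bookkeeping in this last step --- ruling out reducible or non-reduced effective representatives --- to be the only place demanding genuine care, the rest being direct applications of the three cited tools.
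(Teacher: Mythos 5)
Your proof is correct and follows precisely the route the paper intends: the paper offers no written argument, merely noting that the statement is a consequence of the Riemann--Roch theorem, Serre duality, and the adjunction formula (citing \cite{Manin}), and your proposal fills in exactly those three steps in the standard way (Riemann--Roch plus Serre duality to produce an effective representative, ampleness of $-K_X$ to force irreducibility and reducedness, adjunction to conclude smooth rationality). The finiteness and injectivity parts are likewise handled correctly, so there is nothing to object to.
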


For $e\in R(X)$ we denote by $l_e\in L(X)$ the
unique (due to Proposition \ref{bijection}) line with $[l_e]=\phi^{-1}(e)$.

If $X$ is real, we denote by $L_\R(X)=\{l\in L(X)\,|\conj(l)=l\}$
the set of {\it real lines}, and by
$I_\R(X)=\{ v\in I(X)\,|\, \conj_*v=-v \}$
the
set of {\it real exceptional classes}.
We consider also the {\it real root system}
$$
R_\R(X)=\{x\in R(X)\,|\,\conj_*(x)=- x\}
$$
and the restriction map
$$
\phi_\R=\phi\vert_{I_\R(X)} : I_\R(X)\to R_\R(X).
$$

\begin{cor}\label{bijection-R}
If a del Pezzo surface $X$ is real, then the maps
$\io$ and $\phi$ induce bijections
$\io_\R : L_\R(X)\to I_\R (X)$ and $\phi_{\R} : I_\R(X)\to R_\R(X)$.\qed
\end{cor}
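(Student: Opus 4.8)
The plan is to derive both bijections from Proposition~\ref{bijection} by exhibiting $\io$ and $\phi$ as equivariant maps and then identifying $L_\R(X)$, $I_\R(X)$, $R_\R(X)$ as the fixed-point sets of the relevant involutions.

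First I would record two facts about the action of $\conj_*$ on $H_2(X)$. Being an anti-holomorphic involution, $\conj$ is conjugate-linear on each tangent plane $\cong\C^2$, hence orientation-preserving on the $4$-manifold $X$, so $\conj_*$ preserves the intersection form; and since $\conj$ conjugates the complex structure, $\conj_*K_X=-K_X$. Combining these, for $v\in I(X)$ one gets $(-\conj_*v)^2=v^2=-1$ and $(-\conj_*v)\cdot K_X=-(v\cdot\conj_*K_X)=-(v\cdot(-K_X))=v\cdot K_X=-1$, so $v\mapsto-\conj_*v$ is an involution of $I(X)$, and the same computation shows $x\mapsto-\conj_*x$ preserves $R(X)$. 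By the very definitions, the fixed loci of these two involutions are exactly $I_\R(X)$ and $R_\R(X)$.

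The geometric core of the argument is the orientation sign in $[\conj(l)]=-\conj_*[l]$ for a line $l$. Because $d\conj$ is conjugate-linear, it reverses the complex orientation of the (real two-dimensional) curve $l$, so that the fundamental class of the complex curve $\conj(l)$ differs from $\conj_*[l]$ by a minus sign; in particular $\conj$ permutes $L(X)$ and $\io(\conj(l))=-\conj_*\io(l)$, i.e. $\io$ intertwines $\conj$ with $v\mapsto-\conj_*v$. This is precisely what forces the reality condition to read $\conj_*v=-v$ rather than $\conj_*v=v$, and it shows that $\io$ carries the fixed locus $L_\R(X)$ of $\conj$ bijectively onto the fixed locus $I_\R(X)$.

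For $\phi$ the verification is purely arithmetic: from $\phi(v)=-K_X-v$ and $\conj_*K_X=-K_X$ one obtains $\phi(-\conj_*v)=-K_X+\conj_*v=-\conj_*\phi(v)$, so $\phi$ intertwines $v\mapsto-\conj_*v$ on $I(X)$ with $x\mapsto-\conj_*x$ on $R(X)$, and therefore restricts to a bijection $I_\R(X)\to R_\R(X)$. Composing the two restrictions then yields the corollary. I expect the only genuinely delicate point to be the orientation bookkeeping behind $[\conj(l)]=-\conj_*[l]$; once that sign is pinned down, everything else is formal.
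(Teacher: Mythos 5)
Your proof is correct, and it is essentially the paper's (implicit) argument: the corollary is stated with no proof as an immediate consequence of Proposition~\ref{bijection}, the content being exactly what you verify — that $\io$ and $\phi$ intertwine $l\mapsto\conj(l)$ with the involutions $v\mapsto-\conj_*v$ on $I(X)$ and $R(X)$, using $\conj_*K_X=-K_X$, invariance of the intersection form, and the orientation sign $[\conj(l)]=-\conj_*[l]$, so that bijections between fixed loci follow formally. Nothing to correct.
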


As is known, each element of $R(X)$ can be represented by a vanishing cycle of a nodal degeneration of $X$ defined over $\C$.
By contrary, the vanishing cycles of nodal degenerations defined over $\R$ represent only a certain part, denoted $V_\R(X)$,
of $R_\R(X)$ (see, for example, Remark \ref{not-all-vanishing}). By this reason, we call the elements of $R_\R(X)$ {\it real roots} and
reserve the name {\it real geometric vanishing cycles}
for the elements of $V_\R(X)$.

\subsection{Some elements of Smith theory} According to one of basic applications of Smith theory
to topology of real algebraic surfaces (for details, see {\it e.g.} \cite{DIK}),
if a real algebraic surface $(X,\conj)$ has
$H_1(X;\Z/2)=0$
(which is the case if $X$ is a del Pezzo surface), then there exists
a natural homomorphism
$$\bhv : H_2^-(X) \to H_1(X_\R;\Z/2),$$
where by definition
$H_2^-(X)= \ker (1+\conj_*) : H_2(X)\to H_2(X)$.
Given a $\conj$-invariant oriented smooth 2-dimensional submanifold $\Sigma\subset X$, with $\conj$ reversing the orientation of $\Sigma$,
this homomorphism sends
$[\Sigma]\in H_2^-(X)$ to $[\Sigma_\R]\in H_1(X;\Z/2)$, where
$\Sigma_\R=\Sigma\cap X_\R$ is the fixed locus of $\conj\vert_\Sigma$ (a smooth curve in $\Sigma$).

The Smith theory provides also the following.
\begin{proposition}\label{Smith}
If $X$ is a real algebraic surface with $H_1(X;\Z/2)=0$, the homomorphism $\bhv$ is an epimorphism.
It respects the intersection forms in the sense that
\begin{equation}\label{respecting}
\bhv v_1 \cdot \bhv v_2 = v_1\cdot v_2\mod 2 \quad \text{for any} \quad v_1,v_2\in H_2^-(X),
\end{equation}
and has kernel
\begin{equation}\label{Viro-kernel}
 \ker \bhv = (1-\conj_*)H_2(X).
\end{equation}

Thus, $\bhv$ induces an isomorphism $ H_1(X_\R;\Z/2)\cong H_2^-(X)/(1-\conj_*)H_2(X)$.
\qed\end{proposition}

The next relation is a
general property of involutions in unimodular lattices.
\begin{equation}\label{2-kernel}
(1-\conj_*)H_2(X)=\{ v\in H_2^-(X)\,|\, v\cdot H_2^-(X)\subset 2\Z\}.
\end{equation}

\subsection{Deformation classifications}\label{def-class}
The following real deformation classification of real degree 1 del Pezzo surfaces
is well known
(a proof can be found,
for example, in \cite{DIK}). From here on we use notation $\T^2$ for a 2-torus  and $\K$ for a Klein bottle.

\begin{theorem}\label{deform-dP} The deformation class of any real del Pezzo surface $X$ of degree 1 is determined by the topology of $X_\R$. There are 11 deformation classes. They correspond to the following topological types of
$X_\R$: $\Rp2\#(4-a){T}^2$ with $0\le a\le 4$,
$\Rp2\dsum a\S^2$
with $1\le a\le 4$, $\Rp2\dsum \K$, and $(\Rp2\#\T^2)\dsum \S^2$.\qed
\end{theorem}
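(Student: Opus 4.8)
The plan is to reduce the deformation classification to an arithmetic classification of involutions on the Picard lattice and then to read off the topology of $X_\R$. Every degree $1$ del Pezzo surface is a blow-up of $\PP^2$ at $8$ points, so these surfaces form a single connected complex family, $H_2(X;\Z)\cong\la1\ra\oplus E_8$ is the odd unimodular lattice of signature $(1,8)$ with $K_X^2=1$ and $K_X^\perp\cong E_8$, and the monodromy of this family acts on $K_X^\perp$ as the full group $\Aut(E_8)=W(E_8)$ while fixing $K_X$. A real structure induces an isometric involution $\conj_*$ of $H_2(X;\Z)$ preserving $K_X^\perp$; since $\conj$ reverses the orientation of every complex curve one has $\conj_* K_X=-K_X$, consistent with real lines satisfying $\conj_*[l]=-[l]$ (Corollary \ref{bijection-R}). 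The heart of the argument is a Torelli-type reduction: the real deformation class of $(X,\conj)$ is equivalent to the $W(E_8)$-conjugacy class of $\conj_*|_{E_8}$. Proving the nontrivial direction — that every admissible lattice involution is realized by a genuine real del Pezzo surface, and that $W(E_8)$-conjugate involutions are joined by a real deformation — is where I expect the main obstacle to lie, since it requires a real surjectivity statement for the marking together with a connectedness argument keeping the chosen path away from the discriminant of singular members.

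Granting this reduction, the problem becomes the enumeration, up to $W(E_8)$-conjugacy, of the isometries $\sigma$ of $E_8$ with $\sigma^2=\id$ that arise from anti-holomorphic involutions. By Nikulin's theory of involutions on unimodular lattices, each such class is determined by the invariants $(r,a,\delta)$ of the eigenlattice $H_2^-(X)$ (its rank, the $2$-rank of its discriminant group, and the parity $\delta$), subject to signature and unimodularity restrictions and to the existence of a compatible real structure. Carrying out this finite bookkeeping should produce exactly the $11$ admissible classes.

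To recover $X_\R$ from $\conj_*$ I would first compute its Euler characteristic from the Lefschetz fixed-point formula: since $H_1(X)=H_3(X)=0$ and $\conj_*$ acts by $+1$ on $H_0$ and on $H_4$ (an anti-holomorphic involution of a complex surface preserves orientation),
\[
\chi(X_\R)=2+\operatorname{tr}\bigl(\conj_*\mid H_2(X)\bigr)=11-2\,\rk H_2^-(X),
\]
and the isomorphism $H_1(X_\R;\Z/2)\cong H_2^-(X)/(1-\conj_*)H_2(X)$ of Proposition \ref{Smith} yields the first $\Z/2$-Betti number $b_1$; together these give the number of components $b_0=(\chi(X_\R)+b_1)/2$, while orientability is decided by $w_1(X_\R)$. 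These data assemble the homeomorphism type: $\rk H_2^-(X)$ (equivalently $\chi$) already separates the families $\Rp2\#(4-a)\T^2$ and $\Rp2\dsum a\S^2$; among the three classes with $\chi=1$, the surface $\Rp2$ is singled out by being connected, whereas $\Rp2\dsum\K$ and $(\Rp2\#\T^2)\dsum\S^2$ share all $\Z/2$-Betti numbers and are separated only by the orientability of their components (\ie\ by $\delta$). Matching these invariants against the $11$ involution classes both produces the stated list and shows, conversely, that the topology of $X_\R$ is a complete deformation invariant.

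An alternative, more hands-on route uses the model of $X$ as the double cover of the quadric cone $Q$ branched at the vertex and along the sextic $C$: real deformations of $X$ match real deformations of the pair $(Q,C)$, and $X_\R$ is obtained by doubling $Q^\pm_\R$ along $C_\R$, so that the real oval scheme of $C_\R$ on $Q_\R$ dictates the topology. I would expect this to reproduce the same $11$ types, but classifying the admissible real sextic configurations, with the attendant rigidity and wall-crossing analysis, would again be the delicate point.
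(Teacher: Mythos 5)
The paper itself offers no argument for Theorem \ref{deform-dP}; it is quoted as a known result with a reference to \cite{DIK}, so your proposal has to be judged on its own merits, and it contains a genuine, fatal gap. The central ``Torelli-type reduction'' --- that the real deformation class of $(X,\conj)$ is equivalent to the $W(E_8)$-conjugacy class of $\conj_*|_{K_X^\perp}$ --- is false, and the paper's own Table \ref{eigenlattices} exhibits the counterexample. Since $K_X^2=1$, the lattice $H_2(X)$ splits orthogonally as $\Z K_X\oplus K_X^\perp$ with $K_X^\perp\cong E_8$ and $\conj_*=(-1)\oplus\tau$, so the lattice-with-involution data is exactly the conjugacy class of $\tau$ in $\Aut(E_8)=W(E_8)$. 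Now $W(E_8)$ has precisely ten conjugacy classes of elements of order at most two (counting the identity), with $(-1)$-eigenlattices $0$, $A_1$, $2A_1$, $3A_1$, $4A_1$, $D_4$, $D_4+A_1$, $D_6$, $E_7$, $E_8$ --- but there are eleven deformation classes. The classes $(M-2)Ia$ (with $X_\R=\Rp2\dsum\K$) and $(M-2)Ib$ (with $X_\R=(\Rp2\#\T^2)\dsum\S^2$) both have eigenlattice $D_4$, and since all involutions of $E_8$ with eigenlattice $D_4$ are $W(E_8)$-conjugate, these two deformation classes have isomorphic pairs $(H_2(X),\conj_*)$: same $r=5$, same $a=2$, same $\delta$, same $\Z/2$-Betti numbers of $X_\R$, both of type I --- yet non-homeomorphic real loci. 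So your finite bookkeeping would output ten classes, not eleven, and your assertion that the orientability of the components is ``separated by $\delta$'' cannot be repaired: no invariant of the lattice involution whatsoever can distinguish these two surfaces.

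There is a second, related problem: for a rational surface $H^{2,0}(X)=0$, so there is no period map, and no surjectivity or global Torelli statement is available to carry out the realization and connectedness step you correctly flag as the main obstacle. (This is exactly why Nikulin's scheme, which does prove the analogous classification for real K3 surfaces, does not transfer to del Pezzo surfaces.) The viable route is the one you relegate to a closing remark: real deformations of $X$ correspond to real deformations of the pair (cone $Q$, sextic $C$), the real schemes of such sextics are classified as in Theorem \ref{deform-sextic}, and the topology of $X^\pm_\R$ is read off by doubling as in Table \ref{sextics&DelPezzo}; note that it is precisely this model that separates the two $D_4$ classes, via the arrangements $\la\,|||\,\ra$ versus $\la 1|1\ra$. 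That is essentially the argument of the cited reference \cite{DIK}, but in your write-up it is only gestured at, so the proposal as it stands does not prove the theorem.
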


The lattice $H_2^-(X)\cap K_X^\perp $ is one of the main deformation invariants.
These lattices are enumerated in Table \ref{eigenlattices},
where, for each deformation class, we indicate also the Smith type of $X_\R$.
As traditionally, a code $(M-k)$ means that
in the Smith inequality $\dim H_*(X_\R;\Z/2)\le \dim H_*(X;\Z/2)$ the right-hand side is greater by
$2k$ than the left-hand side.
The $(M-2)$-case includes four deformation classes and
two of them, denoted
$(M-2)Ia$ and $(M-2)Ib$, are {\it of type $I$}, which means that the fundamental class of $X_\R$ is realizing $0$
in $H_2(X;\Z/2)$.

\begin{table}[h]
\caption{The root lattices $H_2^-(X)\cap K_X^\perp $}\label{eigenlattices}
\resizebox{\textwidth}{!}{
\hbox{\boxed{\begin{tabular}{c||ccccccc}
Smith type of $X_\R$&$M$&$(M-1)$&$(M-2)$&$(M-3)$&$(M-4)$&$(M-2)Ia$&$(M-2)Ib$\\
\hline\hline
Topology of $X_\R$&$\Rp2\#4\T^2$&$\Rp2\#3\T^2$&$\Rp2\#2\T^2$&$\Rp2\#\T^2$&$\Rp2$&$\Rp2\dsum \K$& $(\Rp2\#\T^2)\dsum \S^2$\\
\hline
$H_2^-(X)\cap K_X^\perp $&$E_8$&$E_7$&$D_6$&$D_4+A_1$&$4A_1$&$D_4$&$D_4$\\
\end{tabular}}}}
\resizebox{0.315\textwidth}{!}{
\hskip-57.3mm\hbox{\boxed{\begin{tabular}{c||cccc}
Smith type of $X_\R$&$M$&$(M-1)$&$(M-2)$&$(M-3)$\\
\hline\hline
Topology of $X_\R$&$\Rp2\dsum 4\S^2$&$\Rp2\dsum 3\S^2$&$\Rp2\dsum 2\S^2$&$\Rp2\dsum \S^2$\\
\hline
$H_2^-(X)\cap K_X^\perp $&0&$A_1$&$2A_1$&$3A_1$\\
\end{tabular}}}}
\end{table}

The real deformation classes of sextics
$C\subset Q$ that arise as branching locus for $X\to Q$
are listed in Table 2
(for a proof see, for example, \cite{DIK}).
The code $\la |||\ra$ refers to $C_\R$ having three ``parallel'' connected
components {\it embracing the vertex $\v$} of $Q$
(see Figure 1(f))).
The code $\la a|b\ra$, with $a\ge 0,  b\ge0$
means that $C_\R$ contains one component which embraces the vertex and $a+b$ components which bound disjoint discs and placed
 in $Q_\R$ so that $a$ of them are separated from the remaining $b$ by the embracing component and the vertex.
The components bounding a disc are called {\it ovals}.

\begin{table}[h]
\caption{Correspondence between two deformation classifications}\label{sextics&DelPezzo}
\resizebox{\textwidth}{!}{
\hbox{\boxed{\begin{tabular}{c||ccccccc}
Smith type&$M$&$(M-1)$&$(M-2)$&$(M-3)$&$(M-4)$&$(M-2)Ia$&$(M-2)Ib$\\
\hline\hline
$C_\R\subset Q_\R$&
$\langle 4\vert 0\rangle$&$\langle 3\vert 0\rangle$&$\langle 2\vert 0\rangle$&$\langle 1\vert 0\rangle$&$\langle 0\vert 0\rangle$&$\langle \vert\vert \vert\rangle$
&$\langle 1\vert 1\rangle$\\
\hline
$X_\R^\pm$&$\Rp2\#4\T^2$&$\Rp2\#3\T^2$&$\Rp2\#2\T^2$&$\Rp2\#\T^2$&$\Rp2$&$\Rp2\dsum \K$& $(\Rp2\#\T^2)\dsum \S^2$\\
$X_\R^\mp$&$\Rp2\dsum 4\S^2$&$\Rp2\dsum 3\S^2$&$\Rp2\dsum 2\S^2$&$\Rp2\dsum \S^2$&$\Rp2$&$\Rp2\dsum \K$& $(\Rp2\#\T^2)\dsum \S^2$\\
\end{tabular}}}}
\end{table}

\begin{theorem}\label{deform-sextic} A real non-singular sextic $C$ on $Q$ not passing through the vertex is determined up to real deformations
of $C$ in $Q$ and central symmetries
of $Q$ by the topological type of
the pair $(Q_\R, C_\R)$. There are 7 equivalence classes of such sextics up to deformation and
central symmetry.
 They correspond to the following arrangements of $C_\R$ on $Q_\R$: $\langle a\vert 0\rangle$ with $0\le a\le 4$,
$\langle 1\vert 1\rangle$, and $\langle \vert\vert\vert\rangle$.
\end{theorem}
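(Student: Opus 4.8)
The plan is to deduce the classification from the real deformation classification of degree $1$ del Pezzo surfaces (Theorem~\ref{deform-dP}) by means of the branched double covering construction of the introduction. To a real non-singular sextic $C\subset Q$ (a transversal intersection with a real cubic, avoiding the vertex) one attaches the double covering $X\to Q$ branched at $\v$ and along $C$, a real del Pezzo surface of degree $1$; conversely the bi-anticanonical map recovers the triple $(Q,C,\v)$ from $X$ canonically. First I would check that this construction is compatible with deformations: a real deformation of $C$ inside $Q$ lifts to a real deformation of $X$, and since all real quadric cones with real vertex and non-empty real locus are projectively equivalent, the cone $Q$ contributes no extra moduli. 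The more delicate direction is the converse, that every real deformation of $X$ descends to a deformation of the pair $(Q,C)$; here I would use the intrinsic nature of $Q$, $C$ and $\v$ (image, branch sextic and branch point of the bi-anticanonical map) to turn a path of del Pezzo surfaces into a path of sextics that stays transversal and vertex-avoiding.

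Next I would pin down the role of the central symmetry. A real sextic $C$ does not single out either lift of the real structure of $Q$ to the covering, so $X$ carries the unordered Bertini pair $\{\conj,\conj\circ\tau_X\}$; yet, as a subset of the fixed cone $Q$ taken up to deformation \emph{only}, $C$ does remember which of the two domains $Q^{+}_\R,Q^{-}_\R$ cut out by $C_\R\cup\v$ lies over which surface, since an oval cannot be pushed across the vertex-embracing component without passing through a singular sextic or through $\v$. The central symmetry $\sigma$ of $Q$ — the projective involution fixing $\v$ and interchanging the two nappes of $Q_\R$ — is exactly the discrete equivalence (not realisable by a deformation of $C$) that swaps $Q^{+}_\R\leftrightarrow Q^{-}_\R$, hence $\conj^{+}\leftrightarrow\conj^{-}$. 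I therefore expect real sextics up to deformation alone to be in bijection with the $11$ deformation classes of Theorem~\ref{deform-dP}, and imposing in addition the equivalence by $\sigma$ to identify each class with its Bertini dual.

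It then remains to count the orbits of Bertini duality on the $11$ classes and to read off the arrangements. From the topology recorded in Theorem~\ref{deform-dP} and Table~\ref{eigenlattices}, the duality exchanges the class with $X_\R\cong\Rp2\#a\T^2$ and the one with $X_\R\cong\Rp2\dsum a\S^2$ for $a=1,2,3,4$ (four two-element orbits), and fixes the three classes with $X_\R\cong\Rp2$, $\Rp2\dsum\K$ and $(\Rp2\#\T^2)\dsum\S^2$; this gives $4+3=7$ orbits. To attach the labels I would analyse $Q_\R\setminus\v$, which is an open cylinder, so that the components of $C_\R$ are either ovals (bounding discs) or vertex-embracing circles (essential on the cylinder); computing the topology of the double covering of each domain then matches the four exchanged orbits with $\langle a\vert 0\rangle$, $a=1,\dots,4$, the fixed orbit over $\Rp2$ with $\langle 0\vert 0\rangle$, and the two remaining fixed orbits with $\langle 1\vert 1\rangle$ and $\langle\vert\vert\vert\rangle$.

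The main obstacle, as is typical here, is the completeness half of the correspondence rather than the counting: one must ensure that no further configuration of circles on the cylinder is realised by an \emph{actual} sextic — that is, by a smooth real curve disjoint from the exceptional section in the minimal resolution of $Q$ (a Hirzebruch surface $\Sigma_2$), in the class cut out by cubics, a canonical genus-$4$ curve — and that the topological type of the pair $(Q_\R,C_\R)$ is a complete invariant for the combined deformation-and-$\sigma$ equivalence. Both facts I would extract from Theorem~\ref{deform-dP}: the forbidden configurations are precisely those whose double coverings would produce a topology of $X_\R$ absent from the list, while completeness follows from the completeness of the del Pezzo classification together with the descent of deformations established in the first step. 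I thus expect all the difficulty to be concentrated in making that descent and the resulting bijection on path components precise, the passage $11\to 7$ being then purely formal.
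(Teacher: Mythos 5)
The paper itself gives no proof of this theorem: it is quoted, with references, from \cite{DIK} (see the sentence preceding Table \ref{sextics&DelPezzo}), and in that literature the deformation classification of real degree~1 del Pezzo surfaces is traditionally deduced \emph{from} the classification of the branch sextics, not conversely. Inverting the implication is legitimate inside this paper, where Theorem \ref{deform-dP} is taken as a black box, but your execution has a genuine gap exactly at the step you defer to the end. For \emph{every} real automorphism $\gamma$ of $Q$ the double coverings determined by $C$ and by $\gamma(C)$ are real-isomorphic (with the two domains matched via $\gamma$), hence indistinguishable by any invariant of the real deformation class of the covering surfaces. Now $\pi_0(\Aut_\R(Q))\cong\Z/2\times\Z/2$: besides the central symmetry $\sigma$ there is a nappe-preserving ``mirror'' $\mu$ (say $(x_0,x_1,y)\mapsto(x_0,-x_1,y)$ on $\PP(1,1,2)$, of negative determinant in $\PP^3$) and the nappe-swapping composition $\sigma\mu$, which has positive determinant, so it lies in the identity component of $\PGL_4(\R)$ and is realized as the monodromy of a loop of cones. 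Consequently what your argument can deliver is the classification of sextics up to deformation together with the action of the \emph{whole} group $\Aut_\R(Q)$ --- an a priori coarser equivalence than ``deformation and central symmetry''. In particular your claimed bijection between the $11$ sextic classes and the $11$ surface classes cannot be set up this way: for an asymmetric arrangement, $C$ and $(\sigma\mu)(C)$ are not deformation equivalent, yet their coverings are isomorphic surfaces. To recover the theorem as stated you must prove in addition that $\mu(C)$ is deformation equivalent to $C$; nothing in Theorem \ref{deform-dP} or in the topology of the coverings can see this. Concretely, your tools cannot exclude that $\la 1\vert 1\ra$ splits into two deformation classes interchanged by $\sigma\mu$ but not by $\sigma$ --- a scenario in which all your surface-level statements remain true while the theorem fails. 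The standard repair is to exhibit in each arrangement an explicit mirror-invariant sextic (the constructions of Lemma \ref{contraction} and Figure \ref{M-e8} do exactly this); that ingredient is absent from your plan.

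Two further steps are under-justified. First, the Bertini pairing of the $11$ classes, on which your orbit count rests, does not follow ``from Theorem \ref{deform-dP} and Table \ref{eigenlattices}'': the Bertini-dual structure has eigenlattice equal to the orthogonal complement in $E_8$ of the given one, which does pair $E_8\leftrightarrow 0$, $E_7\leftrightarrow A_1$, $D_6\leftrightarrow 2A_1$, $D_4+A_1\leftrightarrow 3A_1$ and fixes $4A_1$, but the two type $I$ classes $\Rp2\dsum\K$ and $(\Rp2\#\T^2)\dsum\S^2$ both have eigenlattice $D_4$, so the lattice data cannot decide between each being self-dual (giving $7$ orbits) and the two being dual to each other (giving $6$); this must instead come from computing \emph{both} coverings of explicit sextics of types $\la\vert\vert\vert\ra$ and $\la1\vert1\ra$, i.e. from realizability constructions again. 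Second, the completeness step needs the orientability of the coverings to be computed correctly: an annulus bounded by two vertex-embracing components lifts to a Klein bottle, whereas an annulus bounded by two nested ovals lifts to a torus (the difference is the $\Z/2$-monodromy coming from the branch point at $\v$). For the arrangement ``embracing component plus a nested pair of ovals'' one of the two coverings is $(\Rp2\#\T^2)\dsum\S^2$, which \emph{is} on the list of Theorem \ref{deform-dP}; that arrangement is excluded only because the other covering is $\Rp2\dsum\T^2$ rather than $\Rp2\dsum\K$. If the torus/Klein-bottle distinction is fudged, the exclusion argument collapses.
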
\qed

By {\it central symmetries} of $Q$ here we mean automorphisms of $Q$ induced by real projective involutions that fix the vertex $\v\in Q$ and a hyperplane not passing through $\v$.

\begin{remark} If in Theorem \ref{deform-sextic} we exclude
central symmetries, then the number of classes becomes 11, since then, for each
$1\le a\le 4$,
we will be obliged to distinguish $\langle a\vert 0\rangle$ and $\langle 0\vert a\rangle$.
\end{remark}

\begin{lemma}\label{contraction}
For any real non-singular sextic $C\subset Q\sm\{\v\}$,
there exists a real degeneration of $C$ which contracts simultaneously
all the ovals of $C_\R$.
\end{lemma}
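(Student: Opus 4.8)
\emph{Plan.} The plan is to work in the standard weighted model of the anticanonical pair and to reduce the global contraction to a fibrewise one over the base circle of the cone. Write $Q=\PP(1,1,2)$ with vertex $\v=[0:0:1]$, so that $X$ is the double cover $\{w^2=F(x,y,z)\}$ with $F=z^3+c_1(x,y)z^2+c_2(x,y)z+c_3(x,y)$, where $\deg c_i=2i$ and $c_0$ is normalized to $1$ because $C$ misses $\v$. The projection $[x:y:z]\mapsto[x:y]$ presents $Q\sm\{\v\}$ as a line bundle over $\PP^1$, and over each real point $s=[x:y]\in\Rp1$ the locus $C_\R$ is cut out by the monic real cubic $h_s(z)=F(x,y,z)$. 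Thus $C_\R$ is swept out by the real roots of $h_s$ over the circle $\Rp1$, and the real zeros of the $z$-discriminant $\Delta(x,y)$ (a binary form of degree $12$) partition $\Rp1$ into arcs over which $h_s$ has one or three real roots.

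First I would identify the ovals in this picture. An oval bounds a disc and does not embrace $\v$; equivalently it projects onto a proper closed arc $I\subset\Rp1$ and, over the interior of $I$, it is swept out by a pair of real roots of $h_s$ that are born and then die at the two endpoints of $I$, where $\Delta$ vanishes with a sign change. (The roots that persist over all of $\Rp1$ make up the components embracing $\v$.) \emph{Contracting} such an oval to a \emph{solitary node} (an isolated real point, \ie an $A_1$-singularity with two complex conjugate branches) then amounts to shrinking the arc $I$ to a single point: at the limiting moment the two roots coincide over exactly one base point, and $\Delta$ acquires there a double real zero.

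The mechanism for a single oval is a fibrewise vertical shift. Shrinking $I$ is achieved by replacing $c_3$ with $c_3+t\,g(x,y)$ for a real binary form $g$ of degree $6$ (equivalently, adding the cubic section $t\,g$ to $F$), chosen so that over $I$ the shift moves the relevant critical value of $h_s$ monotonically toward $0$; the sign of $g$ over $I$ is dictated by whether the oval sits above the local minimum or below the local maximum of $h_s$. Because the linear system of cubic sections of $Q$ is large (it is a real $\PP^{15}$), there is ample room to prescribe the sign of $g$ on a neighbourhood of $I$ while keeping $g$ small and controlled away from $I$, so that the deformation is supported near the chosen oval and creates no new real zeros of $\Delta$; for small $t$ this keeps all other components of $C_\R$ isotopic to the original ones. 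Letting $t$ run up to the first collapse value yields a real one-parameter family $C_t$ with $C_t\cong C$ below the critical value and with the chosen oval contracted to a solitary node at it.

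Finally I would carry out all the ovals at once. The projections of the (at most four) ovals are pairwise disjoint arcs of $\Rp1$, so their contractions are supported in disjoint regions and do not interfere. One may either contract them one at a time and concatenate the resulting real paths --- the solitary nodes already formed are isolated and persist under the subsequent deformations --- or, using the freedom in $\PP^{15}$, choose a single section $g$ whose restriction has the correct sign over each arc and tune the finitely many collapse values to coincide, producing one family that contracts all ovals simultaneously. Reality is preserved throughout because $g$ is a real form. The main obstacle is exactly this bookkeeping of signs together with global control: a single shift must push each oval's bubble toward collapse while neither resurrecting extra real roots of $\Delta$ elsewhere nor perturbing the embracing components, and (for the simultaneous version) the several collapse values must be matched. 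Both points are handled by exploiting the large dimension of the linear system and the disjointness of the relevant arcs, but this is where the argument demands care rather than a formal citation.
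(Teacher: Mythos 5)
Your fibrewise mechanism is sound at the local level: writing $G=F+tg$, a moment at which an oval's arc shrinks to an interior minimum of the shifted critical-value function is indeed a point where $\partial_yG$ and $\partial_sG$ vanish simultaneously, i.e.\ a solitary node. The structural facts you assert are also true, but they are not ``equivalent'' to being an oval by definition and need an argument: since $C$ meets every real generator of $Q$ in an odd number $\le 3$ of real points, there is at least one embracing component; hence each oval has at most two points over every generator, hence (a winding-number/degree argument) its projection is a proper arc of $\Rp1$, and two ovals cannot have overlapping arc interiors. These are fixable omissions.

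The genuine gap is the global control of the path $F+tg$, $t\in[0,t_0]$, which you delegate to ``ample room'' in the linear system. What must be proved is that the \emph{first} singular moment of this path is exactly the collapse of the chosen ovals and nothing else, and the dimension count does not give this. Concretely: (i) the degree of $g$ is fixed, so by a Remez/compactness bound the ratio between $\inf_I|g|$ and the size of $g$ on the rest of $\Rp1$ is bounded by a constant depending only on the arcs; since the collapse time is of order $\sup_I(-m)/\inf_I|g|$ (with $m$ the relevant critical value of the fibre cubic), the perturbation off the arcs at time $t_0$ cannot be made small compared with the ``safety margin'' of an arbitrary non-singular $C$ --- for instance the embracing component may have a fold arbitrarily close to pinching, and raising critical values over the ovals will then create a node elsewhere \emph{before} the ovals finish contracting; (ii) avoiding this imposes sign conditions on $g$ not only over the ovals' arcs but at every near-degenerate spot of the other components, and the number of required sign alternations can exceed what a form of bounded degree realizes. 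So for a given $C$ the asserted $g$ need not exist, the linear path must be replaced by a genuinely nonlinear one, and constructing it is the actual content of the lemma; one would also need a genericity step guaranteeing that the limit singularities are honest nodes, since the application in Lemma \ref{real-vanishing-generation} requires vanishing cycles. The paper sidesteps all of this: by the deformation classification (Theorem \ref{deform-sextic}) it suffices to exhibit a \emph{single} degenerate model per deformation class (an embracing component plus the right number of solitary points), and these are built explicitly from plane quartics with solitary points via a double blow-up and blow-down onto $Q$; the degeneration of an arbitrary $C$ is then the composition of a deformation onto the perturbed model with the reverse perturbation. If you want to keep your direct approach, the missing piece is precisely a replacement for that reduction.
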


\begin{proof}
Theorem \ref{deform-sextic} implies that it is sufficient to prove existence of a real sextic $C_0\subset Q\sm\{\v\}$ which consists of a real component embracing the vertex and $2$ solitary points separated by this component, as well as existence, for each $1\le a\le 4$, of a sextic consisting, besides the embracing component, of $a$ solitary points not separated by it.

To construct such sextics, we start  from a real plane quartic $B$ having either
(a) one oval and $1$ solitary point inside it, or
(b) one oval and $a-1$ solitary points outside it.
Next, we pick a generic real support line $L$ to the oval of $B$, which intersects $B$ at a real point with multiplicity 2
and has two imaginary complex conjugate common points with $B$.
We perform a double blow-up of the plane at the tangency point, $B\cap L$, so that the proper images $\hat L$ and $\hat B$ of $L$ and $B$ become disjoint. Then $\hat L$ is a real $(-1)$-curve, while the proper image $\hat E$ of the exceptional curve $E$ of the first blow-up is a real $(-2)$-curve.
The contraction of $\hat L$ and $\hat E$ gives a real surface isomorphic to $Q$ and transforms
$\hat B$ to a real sextic consisting
of an embracing component and $2$ solitary points separated by it in the case (a), or an embracing component and $a$ not separated solitary points in the case (b)
(in both cases, the additional solitary point is given by
the image of $\hat L\cap \hat B$).

As for
the initial quartic $B$,
it can be obtained, for instance, by a small perturbation of a suitable reducible quartic:
$(x^2+y^2)(x^2+y^2-z^2)+\e (x^4+y^4)=0$ in the case (a)
and
$F_1^2+F_2^2+\e G_1G_2=0$ in the case (b), $0<|\e|<\!\!<1$.
Here $F_1$, $F_2$, $G_1$, $G_2$ are real conics chosen so that $F_1, F_2$ intersect each other at 4 real points $p_0,\dots, p_3$, while
$G_1, G_2$ pass through $p_1,\dots, p_{a-1}$ and have  $\e G_1\cdot G_2>0$ at $p_a,\dots, p_3$ and $<0$ at $p_0$.
\end{proof}

\subsection{$\pmb{\Pin^-}$-structures and quadratic functions}
We send a reader to \cite{Kirby} concerning generalities on $\Pin^-$-structures and recall here just
a few key points.

First of all, we permanently make use of
the canonical correspondence between $\Pin^-$-structures $\theta$ on a
smooth 2-manifold $F$ and {\it quadratic functions} $q_\theta: H_1(F;\Z/2)\to\Z/4$,
that is the functions satisfying $q_\theta(x+y)=q_\theta(x)+q_\theta(y)+2(x\!\cdot\!y)\mod4$.
The set
of $\Pin^-$-structures on $F$ and that of quadratic functions on $H_1(F;\Z/2)$
have both a natural affine structure over $H^1(F;\Z/2)$ (in particular, the action of $\a\in H^1(F;\Z/2)$ on quadratic functions is given by sending $q_\theta(x)$ to $q_\theta(x)+2\a(x)$)
and the above correspondence is an affine mapping with respect to these affine structures.

As soon as a two-sided embedding $F\subset M$ of $F$ in a smooth 3-manifold $M$ is equipped with a coorientation, the {\it descent rule} (\cite{Kirby}, Lemma 1.6) gives us
a natural {\it descent map}
$\Pin^-(M)\to\Pin^-(F)$ between the sets of $\Pin^-$-structures on $M$ and $F$.
This map
is compatible with the inclusion homomorphism $H^1(M;\Z/2)\to H^1(F;\Z/2)$ and the corresponding affine group actions.

The {\it switch of coorientation rule} (\cite{Kirby}, Lemma 1.10)
implies that alternation of the coorientation of $F\subset M$ results in a shift of the descent map by
$w_1\in H^1(F;\Z/2)$, which in terms of the function $q_\theta$ is just alternation of sign.

 \begin{lemma}\label{alternating-coorientation}
 Let $M$ be a smooth 3-manifold equipped with a $\Pin^-$-structure and $F\subset M$ be a two-sided 2-submanifold
equipped with two opposite coorientations, $\xi$ and $\xi'$.
Then, the quadratic functions $q,q' : H_1(F;\Z/2)\to\Z/4$ associated with the two $\Pin^-$-structures
induced on $F$ from the $\Pin^-$-structure on $M$ with respect to $\xi$ and $\xi'$, respectively, are opposite: $q'=-q$.
\end{lemma}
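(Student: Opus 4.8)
The plan is to reduce the statement directly to the two cited facts from Kirby--Taylor, namely the switch of coorientation rule (\cite{Kirby}, Lemma 1.10) and the affine compatibility of the $\Pin^-$/quadratic-function correspondence recalled in the paragraph preceding the lemma. The key observation is that both $q$ and $q'$ arise from the \emph{same} ambient $\Pin^-$-structure on $M$ \via\ the descent map, differing only in the choice of coorientation; so the whole content is to track what the coorientation switch does under the dictionary between $\Pin^-$-structures and quadratic functions.

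First I would record the two affine structures in play. The set $\Pin^-(F)$ and the set of quadratic functions on $H_1(F;\Z/2)$ are both affine over $H^1(F;\Z/2)$, and the canonical correspondence $\theta\mapsto q_\theta$ is affine, with $\a\in H^1(F;\Z/2)$ acting by $q_\theta(x)\mapsto q_\theta(x)+2\a(x)$. Let $\theta_\xi,\theta_{\xi'}\in\Pin^-(F)$ denote the two structures descended from the fixed $\Pin^-$-structure on $M$ with respect to $\xi$ and $\xi'$, so that $q=q_{\theta_\xi}$ and $q'=q_{\theta_{\xi'}}$. By the switch of coorientation rule, reversing the coorientation shifts the descent by the class $w_1=w_1(F)\in H^1(F;\Z/2)$; that is, $\theta_{\xi'}=\theta_\xi+w_1$ in the affine structure. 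Translating through the affine correspondence gives
\begin{equation}\label{q-prime-shift}
q'(x)=q(x)+2w_1(x)\mod 4\qquad\text{for all }x\in H_1(F;\Z/2).
\end{equation}

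It remains to identify the right-hand side of \eqref{q-prime-shift} with $-q$. The defining quadratic relation $q(x+y)=q(x)+q(y)+2(x\cdot y)\bmod 4$ forces $q$ to reduce mod $2$ to a linear form, and the standard fact for $\Pin^-$ quadratic functions is that this mod-$2$ reduction is precisely the self-intersection form, $q(x)\equiv x\cdot x\pmod 2$, which by Wu's formula equals $w_1(x)$. Hence $2w_1(x)\equiv 2q(x)\pmod 4$, and substituting into \eqref{q-prime-shift} yields $q'(x)=q(x)+2q(x)=3q(x)=-q(x)\bmod 4$, as claimed. The one place demanding care — the main (and only) real obstacle — is justifying the identity $2w_1(x)=2q(x)\bmod 4$ rigorously: one must confirm that $q$ reduces mod $2$ to the $\Z/2$ self-intersection form and invoke $x\cdot x=w_1(x)$ on a closed surface (equivalently, that $q$ refines $w_1$ in the sense of $\Pin^-$-theory). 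Once this is in hand the computation is immediate, so I would state this mod-$2$ reduction as the crux and cite it from \cite{Kirby}, then close with the three-line arithmetic above.
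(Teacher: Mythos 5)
Your proposal is correct and follows essentially the same route as the paper: both apply the switch-of-coorientation rule to get $q'(x)=q(x)+2w_1(x)$, then combine Wu's formula $w_1(x)=x\cdot x \bmod 2$ with the defining quadratic relation to conclude. The only cosmetic difference is that where you flag the identity $2w_1(x)=2q(x)\bmod 4$ as the crux to be cited from Kirby--Taylor, the paper derives it inline in one line --- $q(x)+q'(x)=2\bigl(q(x)+w_1(x)\bigr)=2q(x)+2(x\cdot x)=q(x+x)=q(0)=0\bmod 4$ --- and indeed your ``standard fact'' $q(x)\equiv x\cdot x\pmod 2$ needs no external citation, being immediate from setting $y=x$ in the quadratic relation and using $q(0)=0$.
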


\begin{proof}
Due to the switch of coorientation rule,
$q'(x)=q(x)+2w_1(x)$ and thus,
$$q(x)+q'(x)=2(q(x)+w_1(x))=2q(x)+2(x\!\cdot\!x)=q(x+x)=q(0)=0\mod4.\qedhere$$
\end{proof}

We will be using also the following observation.

\begin{lemma}\label{vanishing-on-vanishing}
If a $\Pin^-$-structure $\theta $ on a real algebraic surface $X_\R$ is induced from a $\Pin^-$-structure
on an ambient 3-fold $Y_\R\supset X_\R$, then
$q_\theta (\bhv v)=0$  for any geometric real vanishing cycle $v\in H_2^-(X)$.
\end{lemma}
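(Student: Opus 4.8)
The plan is to evaluate $q_\theta$ on an explicit circle representing $\bhv v$ that sits in the standard local model of a real node, and to show, through the descent rule, that this circle bounds a disc in $Y_\R$ along which the $\Pin^-$-structure is trivial.

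First I would record the parity of the answer, which already localizes the problem. A \emph{geometric} real vanishing cycle $v$ is, by its very definition, realized by the vanishing $2$-sphere $\Sigma\subset X$ of a real nodal degeneration; this $\Sigma$ is $\conj$-invariant with $\conj|_\Sigma$ orientation-reversing, its real locus $\Sigma_\R=\Sigma\cap X_\R$ is a circle $C$, and $\bhv v=[\Sigma_\R]=[C]$. Since $v^2=-2$, the intersection-preserving property \eqref{respecting} gives $\bhv v\cdot\bhv v=v\cdot v\equiv 0\bmod 2$, and since every quadratic function satisfies $q_\theta(x)\equiv x\cdot x\bmod 2$, we obtain $q_\theta(\bhv v)\in\{0,2\}\subset\Z/4$. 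The entire content of the lemma is thus to exclude the value $2$.

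Next I would pass to the standard local model of a real node. In suitable real-analytic coordinates the smoothing of the complex node is $z_1^2+z_2^2+z_3^2=t$ with real structure $(z_1,z_2,z_3)\mapsto(-\bar z_1,\bar z_2,\bar z_3)$; for $t>0$ the vanishing sphere is $\Sigma=\{z\in\R^3:\sum z_i^2=t\}$, on which $\conj$ acts as the orientation-reversing reflection $z_1\mapsto -z_1$. Writing $y=\Im z_1$, the ambient real locus $Y_\R$ is the $\R^3$ with coordinates $(y,z_2,z_3)$, the surface $X_\R$ is the one-sheeted hyperboloid $-y^2+z_2^2+z_3^2=t$, and $C=\{y=0,\ z_2^2+z_3^2=t\}$ is its waist. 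The decisive feature of this picture is that $C$ bounds the disc $\Delta=\{\,y=0,\ z_2^2+z_3^2\le t\,\}\subset Y_\R$, whose interior is disjoint from $X_\R$ and which meets $X_\R$ transversally exactly along $\partial\Delta=C$.

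Finally I would compute $q_\theta(C)$ through the descent rule of \cite{Kirby}. As the value of a quadratic function on an embedded circle depends only on the $\Pin^-$-structure near that circle, it suffices to argue in the local model, where the $\Pin^-$-structure on the ball $Y_\R$ is the unique $\operatorname{Spin}$-structure. Because $C$ bounds the contractible disc $\Delta$, this ambient structure restricts along $C$ to the bounding structure; descending it to $X_\R$ along the coorientation then yields on the annular neighbourhood of $C$ the $\Pin^-$-structure with $q_\theta(C)=0$. By Lemma \ref{alternating-coorientation} the opposite coorientation only replaces $q_\theta$ by $-q_\theta$, which is harmless since $-0=0$, so the answer is independent of the chosen coorientation, as it must be. I expect the main obstacle to be precisely this last identification---turning ``$C$ bounds a contractible disc in the $\Pin^-$ $3$-fold $Y_\R$'' into ``$q_\theta(C)=0$''. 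This requires unwinding the descent and switch-of-coorientation rules in the model, carefully separating the two normal directions of $C$ in $Y_\R$ (the coorientation of $X_\R$ and the inward normal of $C$ in $\Delta$) and checking that the resulting framing of $C$ induces the bounding, rather than the nonbounding, $\operatorname{Spin}$-structure on the circle---equivalently, the quadratic value $0$ rather than $2$.
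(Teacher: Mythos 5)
Your proposal is correct and takes essentially the same route as the paper: the paper's proof likewise invokes functoriality (locality) of $\Pin^-$-structures to reduce to the standard local real smoothing $x^2+y^2=z^2+\epsilon^2$ of a real node in $\R^3$ (your hyperboloid model is the same one after a coordinate change) and then declares the check there straightforward. The explicit verification you sketch --- the waist circle bounding a flat transverse disc, the untwisted band framing induced by the hyperboloid, hence $q_\theta(\bhv v)=0$ rather than $2$ --- is exactly the ``straightforward'' computation the paper leaves to the reader.
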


\begin{proof} Due to the functoriality of $\Pin^-$ structures,
it is sufficient to check this statement for a standard local real smoothing $x^2+y^2=z^2+\epsilon^2$, $0<\vert \epsilon\vert \ll 1$, of a real nodal surface singularity $x^2+y^2=z^2$ in $\R^3$, which is straightforward.
\end{proof}

\begin{remark} Alternation of sign claimed in
Lemma \ref{alternating-coorientation} has (and even can be deduced from) a very simple visual interpretation.
It is sufficient
to treat just two cases: $F$ homeomorphic to an annulus and $F$ homeomorphic to a M\"obius band. In the first case, $(F,\xi)$ is isotopic
to $(F,\xi')$, so that $q=q'$, which implies $q=-q'$, since in this case $q$ and $q'$ take only even values.
In the second case, $(F,\xi)$ is isotopic to
$(F',\xi'')$ obtained from $(F,\xi')$ by performing a full twist. The latter operation results in adding 2 (the number of half-twists) to the value of $q'$
on the core-circle of the M\"obius band, which implies $q=-q'$,
since $q$ and $q'$ take odd values on this core-circle.
\end{remark}

\subsection{Inside a weighted projective space}\label{weighted-model}
Using an identification of the quadratic cone $Q$ with the weighted projective plane $\PP(1,1,2)$, the presentation of del Pezzo surfaces $X$ of degree 1 as double coverings of $Q$ (see Section \ref{main-intro})
can be
enriched by
embeddings of $X$ into the weighted projective space $\PP(1,1,2,3)$ ({\it cf.} \cite{Dolgachev}).

Namely, one considers the graded anti-canonical ring
$R=\sum_{m\ge 0} H^0(X;-mK_X)$
together with its graded subring $R'$  generated by $H^0(X;-K_X)$ and  $H^0(X;-2K_X)$.
More precisely, $R'$ is generated by two elements in $H^0(X;-K_X)$ and one element of $H^0(X;-2K_X)$, and a choice of such
three elements
defines a double covering $X\to \PP(1,1,2)$ which coincides with the classical bi-anticanonical model considered above.

To generate the whole ring $R$ one adds one element of $H^0(X;-3K_X)$ and obtains a natural embedding
of $X$ into the weighted projective space $\PP(1,1,2,3)$ as a non-singular degree 6 hypersurface defined by equation
\begin{equation}\label{weighted-eq}
w^2=y^3+p_2(x_0,x_1)y^2+p_4(x_0,x_1)y+
p_6(x_0,x_1)
\end{equation}
where
$p_{2k}$ are binary homogeneous polynomials of degree $2k$.
This embedding is unique up to automorphisms of $\PP(1,1,2,3)$.

When $X$ is equipped with a real structure, all the ingredients in this construction
can be chosen defined over the reals. However,
unlike in the classical model, for the second real structure in a Bertini pair, to make it defined over the reals too,
one should
either switch to another real structure on $\PP(1,1,2,3)$
that is defined by $(x_0,x_1,y,w)\mapsto(\bar x_0,\bar x_1,\bar y,-\bar w)$,
or pick another embedding that presents $X$ by equation  $-w^2=y^3+p_2(x_0,x_1)y^2+p_4(x_0,x_1)y+p_6(x_0,x_1)$.

The both models are exhaustive:
every non-singular degree 6 real hypersurface
in $\PP(1,1,2,3)$ as in the second model, as well as
for every real double covering of $Q$ as in the first model, is a
real del Pezzo surface of degree 1. These models are also functorial:
every real automorphism (or a real deformation)
of a real del Pezzo surface of degree 1
extends as a real automorphism (respectively, a real deformation)
to each of the models.

The topology of the real locus of $\PP(1,1,2,3)$ is described below in a bit more general setting.

\begin{lemma}\label{weighted}
The weighted projective 3-space $V=\PP(1,1,p,q)$, for coprime $p,q>1$, has precisely two singular points,
$\v_p=(0,0,1,0)$, $\v_q=(0,0,0,1)$.
Furthermore:
\begin{enumerate}\item
For odd $q$,
the real locus $V_\R$
is topologically non-singular
at the point $\v_q$, and, thus, on $V_\R\sm\{\v_p\}$
there exists a unique up to diffeomorphism smooth structure which agrees with the natural smooth structure on $V_\R\sm \{\v_p,\v_q\}$.
\item
If in addition
$p$ is even, then $V_\R\sm\{\v_p\}$ is diffeomorphic to $\Rp2\times\R$.
\item
If  $q$ is odd and $p$ is even, then on
$V_\R\sm\{\v_p\}$
there exist precisely two $\Pin^-$-structures,
which differ by the class $w_1(V_\R\sm\{\v_p\})$.
\end{enumerate}
\end{lemma}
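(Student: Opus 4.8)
The plan is to work locally at each of the two singular points and then globalize by exhibiting $V_\R\sm\{\v_p\}$ as the total space of a line bundle over $\Rp2$. Recall first that the singular points of $\PP(1,1,p,q)$ are exactly those whose nonzero homogeneous coordinates carry weights with a common factor $>1$; since $\gcd(1,p)=\gcd(1,q)=\gcd(p,q)=1$, the only such points are $\v_p$ (with isotropy $\mu_p$) and $\v_q$ (with isotropy $\mu_q$), which gives the preliminary assertion.

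For item (1) I would use the orbifold chart at $\v_q$: the locus $w\neq0$ identifies a neighborhood of $\v_q$ in $V$ with $\C^3/\mu_q$, where $\zeta=e^{2\pi i/q}$ acts by $\zeta\cdot(x_0,x_1,y)=(\zeta x_0,\zeta x_1,\zeta^p y)$ and complex conjugation descends to this quotient. The key point is that for odd $q$ every conjugation-invariant orbit contains a \emph{unique} real point: a real representative exists because $2$ is invertible modulo the odd number $q$ (so the relevant phase can be killed by a power of $\zeta$), and it is unique because $1$ is the only real $q$-th root of unity (with $\gcd(p,q)=1$ handling the orbits on the $y$-axis). Thus the composite $\R^3\hookrightarrow\C^3\to\C^3/\mu_q$ is a continuous bijection onto the real locus near $\v_q$; being proper (a closed embedding followed by a finite quotient), it is a homeomorphism onto its image, so $V_\R$ is locally Euclidean, hence topologically non-singular, at $\v_q$. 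Consequently $V_\R\sm\{\v_p\}$ is a topological $3$-manifold, and by Moise's theorem it carries a unique smooth structure, necessarily agreeing with the given one off $\v_q$.

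For item (2), assume in addition that $p$ is even. I would project away the weight-$p$ coordinate: the map $\pi\colon V_\R\sm\{\v_p\}\to\PP(1,1,q)_\R$, $[x_0:x_1:y:w]\mapsto[x_0:x_1:w]$, is well defined precisely because $\v_p$ is the only real point with $x_0=x_1=w=0$. For odd $q$ one has $\PP(1,1,q)_\R\cong\S^2/(\pm1)=\Rp2$ (the weight-$q$ coordinate turns the $t=-1$ action into the antipodal map, and there are no further real identifications), and the fibre of $\pi$ over each base point is a single copy of $\R$ in the $y$-coordinate, as one checks in the affine charts $x_0\neq0$ and $x_1\neq0$, and in the chart $w\neq0$ around the smooth point $\v_q\in\Rp2$ (where $\pi$ is literally $\R^3\to\R^2$, $(x_0,x_1,y)\mapsto(x_0,x_1)$). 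Hence $\pi$ is a smooth rank-$1$ real vector bundle, whose transition functions are the rescalings $y\mapsto t^p y$ with $t\in\R^\times$; these are \emph{positive} because $p$ is even, so the bundle is orientable, hence trivial, giving $V_\R\sm\{\v_p\}\cong\Rp2\times\R$.

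Item (3) is then formal: the set of $\Pin^-$-structures on a manifold, when non-empty, is an affine space over $H^1(\,\cdot\,;\Z/2)$, and $H^1(\Rp2\times\R;\Z/2)\cong\Z/2$, so there are at most two, differing by the nonzero class, which is $w_1(V_\R\sm\{\v_p\})=w_1(\Rp2)$. Non-emptiness holds because the standard existence condition $w_2+w_1^2=0$ is satisfied: with $a$ the generator of $H^1(\Rp2;\Z/2)$ one has $w_2+w_1^2=a^2+a^2=0$. I expect the only genuinely delicate step to be item (2): one must verify local triviality of $\pi$ over the orbifold point $\v_q$ of the base and track the real identifications there, since the naive real weighted quotient over-counts the fibre. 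Working in the odd-$q$ chart $w\neq0$, in which each real point has a unique real representative, is what makes both the local triviality of the fibration and the positivity of the transition functions (forced by $p$ even) transparent.
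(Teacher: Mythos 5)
Your proof is correct, but it takes a genuinely different route from the paper's. The paper argues globally: it presents $V=\PP(1,1,p,q)$ as the quotient of $\PP^3$ by $G=\Z/p\times\Z/q$ acting by roots of unity on the last two coordinates, notes that for $p,q$ both odd the real locus $\PP^3_\R$ maps bijectively onto $V_\R$ (settling (1) in that case), and that for $p$ even, $q$ odd the locus $V_\R$ is the union of the image of $\PP^3_\R$ with the image of a ``twisted'' real $3$-space $(1,1,c,1)\cdot\PP^3_\R$, $c$ a primitive $2p$-th root of unity; these are two affine cones with common vertex $\v_p$ and common boundary $\PP^2_\R$, so deleting $\v_p$ gives $\Rp2\times\R$, proving (1) and (2) at once. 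You instead argue locally and fibrewise: (1) via the orbifold chart $\C^3/\mu_q$ at $\v_q$ and the unique-real-representative argument (valid uniformly in $p$, using invertibility of $2$ mod odd $q$; note that the freeness of the action off the origin, coming from $\gcd(p,q)=1$, is needed for the \emph{existence} of real representatives, not only their uniqueness), and (2) by exhibiting the forgetful map $V_\R\sm\{\v_p\}\to\PP(1,1,q)_\R\cong\Rp2$ as a real line bundle whose transition functions $y\mapsto t^p y$ are positive because $p$ is even, hence trivial; item (3) is proved identically in both (vanishing of $w_1^2+w_2$ and $H^1=\Z/2$). Your version is more elementary and isolates exactly where each parity hypothesis enters (odd $q$ for topological smoothness at $\v_q$, even $p$ for orientability of the normal line); the paper's version buys the explicit global picture of $V_\R$ as two cones glued along $\{y=0\}\cong\Rp2$ (and $V_\R\cong\Rp3$ when $p,q$ are both odd), which it reuses later, e.g.\ in Lemma \ref{Klein-case} and Remark \ref{normalization}, where the $\Rp2$ factor is identified with the coordinate plane $\{y=0\}$ --- precisely the base of your line bundle, so the two descriptions agree. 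One step you should make fully explicit is that every point of $V_\R\sm\{\v_p\}$ admits real weighted homogeneous coordinates (a priori the fixed locus of conjugation could be larger); your chart-by-chart check --- trivial isotropy in the charts $x_0\ne0$, $x_1\ne0$, and the odd-$q$ argument in the chart $w\ne0$ --- does supply this, and it is exactly the point where the paper instead invokes the twisted real form.
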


\begin{proof}
Recall that $V=\PP(1,1,p,q)$ is the quotient of $\C^4\sm\{0\}$ by the action of $\C^*$ sending
 $(x_0:x_1:x_2:x_3)$ to $(\lambda x_0:\lambda x_1:\lambda^p x_2: \lambda^q x_3)$ which (for coprime $p$ and $q$) is free at all points
 of $\C^4\sm\{0\}$ except the points of two coordinate lines, $x_0=x_1=x_2=0$ and $x_0=x_1=x_3=0$,
that therefore represent
the only two singular points of the quotient, $\v_p$ and $\v_q$.

On the other hand, $\PP(1,1,p,q)$ can be viewed as the quotient of
$\PP^3=\PP(1,1,1,1)$ by the action of $G=\Z/p\times \Z/q$ sending
$[x_0:x_1:x_2:x_3]$ to $[x_0:x_1:a x_2:b x_3]$, where $a,b$ are roots of unity, $a^p=b^q=1$.
Thus, if $p$ and $q$ are odd, then the real locus of $\PP^3$ projects bijectively onto the real locus
 $V_\R$ of $V=\PP^3/G$, and thus the both points, $\v_p$ and $\v_q$, are topologically non-singular in $V_\R$.
 If $p$ is even and $q$ is odd, then the quotient $\PP^3_\R/G$ gives only a half of the real locus $V_\R$.
The other half is the quotient of a "twisted" real 3-space
$$(1,1,c,1)\cdot \PP^3_\R=\{[x_0:x_1:cx_2:x_3]\,|\,x_0,x_1,x_2,x_3\in\R\}\subset \PP^3,$$
where $c$ is a primitive root of $1$ of degree $2p$. This twisted real 3-space
is indeed the fixed-point set of the twisted complex conjugation
involution
$(x_0,x_1,x_2,x_3)\mapsto (\bar x_0, \bar x_1, c^2\bar x_2, \bar x_3$). These two halves of $V_\R$
are affine cones with a common boundary $\PP^2_\R$ and a common vertex at $\v_p$. This proves
(1) and (2)
(uniqueness of differential-topological smoothings in dimension $3$ is a well known general phenomenon).

Claim (3) follows from vanishing of the class $w_1^2+w_2=0$ for
$\Rp2\times\R$
and $H^1(\Rp2\times\R;\Z/2)=\Z/2$.
\end{proof}

\section{Proof of main theorems}
\subsection{Spanning root eigenlattices by vanishing cycles and geometric roots}

\begin{lemma}\label{real-vanishing-generation}
If  $X$ is a real del Pezzo surface of degree 1 with $X_\R= \Rp2\#(4-a)\T^2$, then it admits a system
of real geometric vanishing cycles with Coxeter-Dynkin diagram $E_8$ for $a=0$, $E_7$ for $a=1$, $D_6$ for $a=2$, and $D_4$ for $a=3$.

In the case of $X_\R=(\Rp2\#\T^2)\dsum \S^2$,
it admits a system
of real geometric vanishing cycles of  type $D_4$.
\end{lemma}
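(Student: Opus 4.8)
The plan is to realize the required systems by degenerating, within the given deformation class and over $\R$, to a del Pezzo surface carrying a single simple (ADE) singular point of the prescribed type, and then spreading that point out by a \emph{real morsification}. Since the real root lattice $H_2^-(X)\cap K_X^\perp$, and with it the real root system $R_\R(X)$, is a deformation invariant (Table~\ref{eigenlattices}), and since the set of real geometric vanishing cycles is carried along, up to the real monodromy action, by real deformations, it suffices to produce one representative surface in each class together with one such degeneration: a system of type $T$ for one member transports to a system of type $T$ for every member of the class. The target types are dictated by the root parts of the eigenlattices: $E_8,E_7,D_6$ for $a=0,1,2$ (where these are the whole lattice), and $D_4$ for $a=3$ and for $(\Rp2\#\T^2)\dsum\S^2$.

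First I would pass to the weighted model of Subsection~\ref{weighted-model}, writing $X$ as $w^2=y^3+p_2y^2+p_4y+p_6$ in $\PP(1,1,2,3)$, and degenerate the binary forms $p_{2k}$ so as to create the singular point at $[0:1:0:0]$. Taking $p_2=p_4=0$ and $p_6=x_0^5x_1$ gives the local equation $w^2=y^3+x_0^5$, an $E_8$ point; taking instead $p_4=x_0^3x_1$ gives $w^2=y^3+x_0^3y=y(y^2+x_0^3)$, an $E_7$ point; and finely tuned choices of $(p_4,p_6)$ (a Weierstrass $I_2^*$, resp.\ an ordinary triple, pattern) produce a $D_6$, resp.\ a $D_4$ point, \eg $w^2=y^3+x_0^3$ for $D_4$. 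In each case the singular surface is real and is still a double cover of $Q$ branched along a real sextic that has acquired the corresponding real plane-curve singularity ($y^3+x_0^5$, $y(y^2+x_0^3)$, and so on).

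Next I would use the classical correspondence between a smoothing of a simple surface singularity of type $T$ and its root lattice: the vanishing cycles of the smoothing form a root basis with Coxeter--Dynkin diagram $T$, and every $\conj$-invariant vanishing cycle is, by definition, a real geometric vanishing cycle lying in $R_\R(X)$. To make all $\mu(T)=\rk T$ of them simultaneously $\conj$-invariant, I would invoke the existence of a real morsification of the relevant real plane-curve singularity (A'Campo--Gusein-Zade), whose real vanishing cycles realize the full diagram $T$, and transfer it to $X$ through the double covering $w^2=f$: a real node of the morsified branch sextic lifts to a real node of $X$, so the $\rk T$ real curve-vanishing cycles yield $\rk T$ real geometric vanishing cycles of $X$ with the same intersection diagram. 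This produces the asserted systems of types $E_8,E_7,D_6,D_4$.

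The hard part will be to control two features of the chosen real smoothing at once: that it be a \emph{maximal} real morsification, so that all $\rk T$ vanishing cycles are $\conj$-invariant, and that it nevertheless land in the prescribed deformation class, i.e.\ that the perturbed real sextic have exactly the oval arrangement of the matching row of Table~\ref{sextics&DelPezzo} (hence $X_\R=\Rp2\#(4-a)\T^2$, resp.\ $(\Rp2\#\T^2)\dsum\S^2$). For $a=0$ the two maximality demands reinforce each other and force the full $E_8$; the delicate cases are $a=1,2,3$ and the type~$I$ class $(M-2)Ib$, where one must exhibit a real morsification realizing the smaller diagram \emph{inside} the correct real deformation class. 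I would settle these by the explicit deformations of the $p_{2k}$ above, tracking the real branch sextic exactly as in the proof of Lemma~\ref{contraction} to read off its topological type and compare with Table~\ref{sextics&DelPezzo}. The discrepancy in the case $a=3$, where the eigenlattice is $D_4+A_1$ but only $D_4$ is claimed, is expected: the extra orthogonal root should be a real root that is not a real geometric vanishing cycle (see Remark~\ref{not-all-vanishing}), so no morsification in this class can realize it.
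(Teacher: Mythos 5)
Your strategy---degenerate the branch sextic to a curve with a single ADE point and spread it out by a real morsification---is in the same spirit as the paper's proof, which instead degenerates the sextic to a union of three conics (so that only nodes occur) and perturbs, reading off real geometric vanishing cycles from the resulting bridges and contractible ovals (Figure \ref{M-e8} together with Lemma \ref{contraction}). But your transfer step fails as stated, in two concrete ways. First, the morsified curve does \emph{not} have $\rk T$ real nodes: a plane curve singularity of Milnor number $\mu$ with $r$ real branches morsifies into a curve with $\delta=(\mu+r-1)/2$ nodes, and the remaining $\mu-\delta$ vanishing cycles correspond to real extrema sitting in the bounded regions of the divide (for $E_8$: four nodes and four regions, not eight nodes). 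So ``a real node of the branch sextic lifts to a real node of $X$'' accounts for only half of the cycles; to turn the region cycles into real geometric vanishing cycles of the double cover you need precisely a contraction argument of the type of Lemma \ref{contraction}, which you never supply. Second, and more seriously, even granting that all $\mu$ cycles are realized, the intersection diagram of the distinguished basis attached to a morsification of $w^2=f$ is \emph{not} the Coxeter--Dynkin tree $T$: whenever a maximum region and a minimum region of the divide share a crossing, the corresponding cycles intersect, producing saddle--max--min triangles. This already happens for the $E_7$ and $E_8$ divides (only $D_4$, whose divide has a single region, is immune). The tree form is recovered only after Gabrielov (braid) moves, which replace basis elements by classes that are no longer known to be represented by \emph{real} nodal degenerations; so your system does not have the diagram asserted in the lemma and cannot be fed into Lemma \ref{distinguished bases}, which needs an honest Coxeter basis of real geometric vanishing cycles. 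The paper sidesteps exactly this by smoothing the nodal curve completely first: the cycles then become bridges and \emph{disjoint} ovals, the incidence diagram is bipartite (no triangles), and the tree is extracted by dropping vertices.

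The remaining gap you acknowledge but do not close: verifying that the degeneration-plus-morsification can be arranged \emph{inside} each prescribed deformation class, i.e.\ that the smoothed sextic has the arrangement $\la 4|0\ra$, $\la 3|0\ra$, $\la 2|0\ra$, $\la 1|0\ra$, or $\la 1|1\ra$ demanded by Table \ref{sextics&DelPezzo}. This is where the actual content of the lemma lies, and ``tracking the real branch sextic as in Lemma \ref{contraction}'' is a promise, not an argument; note for instance that in the $(M-2)Ib$ case your degenerate sextic with one $D_4$ point must already carry an extra oval on the other side of the embracing component, which your explicit equations (some of which are also not homogeneous of the right weighted degree, and one of which acquires an unintended extra node at $[1:0]$) do not exhibit. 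Your final remark about $a=3$ is consistent with the paper---the $A_1$ summand of $D_4+A_1$ is indeed handled there not by vanishing cycles but by placing it in $(1-\conj_*)H_2(X)$ (Lemma \ref{distinguished bases}, Remark \ref{not-all-vanishing})---but it, too, is left unproved.
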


\begin{figure}[h!]
\vskip-2mm
\caption{Construction of real sextics by perturbation from a union of 3 conics: (b)--(f) from (a), and (h) from (g).}\label{M-e8}
\hbox{\includegraphics[width=1.2\textwidth]{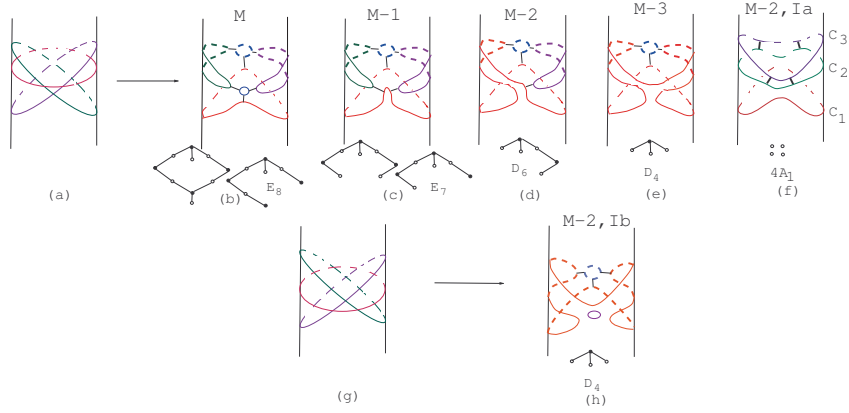}}
\end{figure}

\begin{proof} It is sufficient to check the statement for one surface in each of the
five deformation classes involved.

We start with a singular sextic $C_0\subset Q$ on the cone $Q$ splitting into
three conic sections as on Figure \ref{M-e8}(a) (where $Q_\R$ is sketched as a cylinder).
A double covering over $Q$ branched along $C_0$ is a singular (nodal) surface $X_0$.
A perturbation of $C_0$ that smooths the nodes of $C_0$
like on Figure \ref{M-e8}(b) leads to an M-sextic $C$ for which
the double covering $X\to Q$ branched along $C$ is a del Pezzo surface of degree 1
with $X_\R=\Rp2\#4\T^2$.
Each of the six nodes of $C_0$ gives a
{\it bridge} in $C$ (a purely imaginary vanishing 1-cycle) intersecting
a pair of real components of $C_\R$.
Each bridge bounds in $Q$ a disc on which the complex conjugation acts as a reflection in its
diameter shown on Figure \ref{M-e8} as an interval joining the ovals, and
it is the inverse image of such a disc that represents a real geometric vanishing cycle in $X$.
On the other hand, according to Lemma \ref{contraction}, all the four components of $C_\R$ that bound discs in $Q_\R$ can be contracted simultaneously to 4 separate points, which
shows that the inverse images of these 4 discs represent each
a real geometric vanishing cycle in $X$.
The Coxeter-Dynkin graph of the described 10 real geometric vanishing cycles is as
shown at the bottom of Figure \ref{M-e8}(b), where black vertices correspond to the
ovals and white to the bridges.
After dropping two of these cycles as it is shown there, we obtain the $E_8$-diagram.

To treat the other 4 cases, we use the perturbations shown in Figures \ref{M-e8}(c,d,e,h)
and apply literally the same arguments as above.
\end{proof}

\begin{lemma}\label{distinguished bases}
For every degree 1 real del Pezzo surface $(X,\conj)$ with $X_\R\ne \Rp2\dsum \K$,
 the lattice $H_2^-(X)\cap K_X^\perp$
admits a Coxeter basis of roots, $\mathcal B$, which has the following properties:
\begin{enumerate}\item
its Coxeter-Dynkin diagram is of the type shown in Table \ref{eigenlattices},
\item
each
element of $\mathcal B$
is either
a real geometric vanishing cycle, or
belongs to $(1-\conj_*)H_2(X)$;
\item
the linear span of $\bhv(\mathcal B)$
is the kernel of $w_1(X_\R):H_1(X_\R;\Z/2)\to\Z/2$.
\end{enumerate}
\end{lemma}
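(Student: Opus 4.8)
The plan is to dispose of condition (3) once and for all by a lattice computation valid for \emph{any} root basis of $E:=H_2^-(X)\cap K_X^\perp$, and then to produce, class by class, a root basis realizing conditions (1) and (2); condition (3) will then come for free. First I would record that, because $K_X^2=1$, the rank-one lattice $\Z K_X$ splits off orthogonally, so $H_2^-(X)=\Z K_X\oplus E$. Feeding this splitting into relation \eqref{2-kernel} gives $(1-\conj_*)H_2(X)=2\Z K_X\oplus 2E^*$; in particular $\ker(\bhv|_E)=(1-\conj_*)H_2(X)\cap E=2E^*$. Since $E^*/E$ is $2$-torsion for every lattice in Table \ref{eigenlattices}, we have $2E^*\subseteq E$ and $2E^*/2E\cong E^*/E$, so $\bhv$ induces an injection $E/2E^*\hookrightarrow H_1(X_\R;\Z/2)$ with $\dim_{\Z/2}\bhv(E)=\rk E-\dim_{\Z/2}(E^*/E)$.

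Next I would match this with $\dim\ker w_1$. By Proposition \ref{Smith} and the splitting above, $H_1(X_\R;\Z/2)\cong H_2^-(X)/(1-\conj_*)H_2(X)\cong \Z/2\oplus E/2E^*$, so $\dim\ker w_1(X_\R)=\dim H_1(X_\R;\Z/2)-1=\rk E-\dim_{\Z/2}(E^*/E)$ (each surface here is nonorientable, hence $w_1\ne0$). On the other hand, for any root $r\in E$, relation \eqref{respecting} gives $\bhv r\cdot\bhv r=r^2\equiv0\pmod2$, while the Wu formula on the closed surface $X_\R$ says $x\cdot x=\langle w_1(X_\R),x\rangle$; thus $\bhv r\in\ker w_1$, so $\bhv(E)\subseteq\ker w_1$. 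Comparing dimensions forces $\bhv(E)=\ker w_1$. Consequently (3) holds for \emph{any} basis $\mathcal B$ of $E$ by roots, and the whole problem reduces to arranging (1) and (2).

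For (1) and (2) I would argue case by case through Table \ref{eigenlattices}. For the classes $M,(M-1),(M-2)$ (where $X_\R=\Rp2\#(4-a)\T^2$, $a=0,1,2$) and for $(M-2)Ib$ (where $X_\R=(\Rp2\#\T^2)\dsum\S^2$), Lemma \ref{real-vanishing-generation} already supplies a system of real geometric vanishing cycles whose diagram is $E_8,E_7,D_6,D_4$ respectively; such a simple system $\Z$-spans the corresponding root lattice, hence equals $E$, and condition (2) is immediate because every element is a vanishing cycle. For $(M-3)$ (where $E=D_4+A_1$) Lemma \ref{real-vanishing-generation} provides the $D_4$-part by vanishing cycles, and I would complete it by a root $r$ spanning the orthogonal $A_1$-summand: since $r\cdot D_4=0$ and $r^2=-2$, one has $r\cdot E\subseteq 2\Z$, i.e. $r\in 2E^*\subseteq(1-\conj_*)H_2(X)$, so $r$ satisfies (2) by its second alternative. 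Finally, for the remaining classes, whose eigenlattice is $kA_1$ — namely $\Rp2$ (the self-dual $(M-4)$) and the Bertini-dual series $\Rp2\dsum a\S^2$ with $E=0,A_1,2A_1,3A_1$ — I would take the standard orthogonal basis of $kA_1$; each of its roots $r$ again satisfies $r\cdot E\subseteq 2\Z$, hence lies in $(1-\conj_*)H_2(X)$, so (2) holds with no appeal to vanishing cycles (and, consistently with the first two paragraphs, $\bhv$ annihilates $\mathcal B$ while $\ker w_1=0$).

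The hard part is the geometric input — the existence of enough real geometric vanishing cycles — which is precisely why $\Rp2\dsum\K$ must be excluded: its eigenlattice is again $D_4$, but any Coxeter basis of $D_4$ contains roots with odd mutual product, which therefore cannot lie in $(1-\conj_*)H_2(X)$ and would have to be realized as real geometric vanishing cycles, something Lemma \ref{real-vanishing-generation} does not furnish for $\Rp2\dsum\K$. The remaining point to secure, in each non-$kA_1$ case, is that the vanishing cycles of Lemma \ref{real-vanishing-generation} assemble into a simple system generating $E$ with index one (not merely a finite-index root sublattice of the same type) and carrying exactly the diagram of Table \ref{eigenlattices}; for $(M-3)$ one must additionally check that adjoining the even $A_1$-root produces an index-one basis of $D_4+A_1$. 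Both are routine lattice-embedding checks — all $D_4$-type roots of $D_4+A_1$ already lie in the $D_4$-summand, so a $D_4$ simple system generates that summand and, with the $A_1$-root, the whole of $E$.
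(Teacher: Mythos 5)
Your proposal is correct. For parts (1) and (2) it coincides with the paper's proof: the non-$A_1$ components are realized by the real geometric vanishing cycles of Lemma \ref{real-vanishing-generation}, the $A_1$-summands are placed in $(1-\conj_*)H_2(X)$ via (\ref{2-kernel}), and the index-one point you flag at the end is indeed settled by the discriminant comparison you sketch. The genuine difference is in part (3). The paper proves (3) only for its specific basis $\mathcal B$: it handles $X_\R=\Rp2\dsum a\S^2$ by a geometric equivariant-smoothing argument (classes in $(1-\conj_*)H_2(X)$ have null-homologous real loci), and in the remaining cases it deduces $\bhv(\mathcal B)\subset\ker w_1(X_\R)$ from the pairing $w_1(X_\R)\circ \bhv v=K_X\circ v \mod 2$ for vanishing cycles, closing the dimension count by comparing the ranks $8,6,4,2$ of the nondegenerate parts of $E_8\otimes\Z/2$, $E_7\otimes\Z/2$, $D_6\otimes\Z/2$, $D_4\otimes\Z/2$ against $\dim\ker w_1(X_\R)$ read off the topological types in Table \ref{eigenlattices}. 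You instead prove (3) once and for all, for an \emph{arbitrary} root basis: the splitting $H_2^-(X)=\Z K_X\oplus E$ together with (\ref{2-kernel}) gives $\ker(\bhv\vert_E)=2E^*$, the Smith isomorphism of Proposition \ref{Smith} converts $\dim\ker w_1(X_\R)$ into the same lattice invariant $\rk E-\dim_{\Z/2}(E^*/E)$, and Wu's formula replaces the paper's implicit identification of the dual of $w_1(X_\R)$ with $\bhv(K_X)$, needing only that $E$ is even. Your route buys uniformity (no case splitting, no appeal to the classification beyond non-orientability of $X_\R$) and the stronger, reusable statement that (3) is automatic for every Coxeter basis of roots, so that all the geometric content sits in (1)--(2); the paper's route stays closer to geometry and avoids dual-lattice bookkeeping. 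Your closing diagnosis of why $\Rp2\dsum\K$ must be excluded (every Coxeter basis of $D_4$ consists of roots adjacent to some other basis root, and such roots can never lie in $(1-\conj_*)H_2(X)$, while Lemma \ref{real-vanishing-generation} supplies no vanishing-cycle basis there) is also correct, and matches why the paper treats that case separately via Lemma \ref{Klein-count-lemma}.
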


\begin{proof}
As it follows from Lemma \ref{real-vanishing-generation} and Table \ref{eigenlattices}, if a component
of the Coxeter-Dynkin diagram of $H_2^-(X)\cap K_X^\perp$
is different from $A_1$, then the corresponding summand of $H_2^-(X)\cap K_X^\perp$ is
realized by real geometric vanishing cycles.
On the other hand, the $A_1$-summands are contained in $(1-\conj_*)H_2(X)$, due to (\ref{2-kernel}).
This shows (1) and (2).

To prove (3) note that each element of $(1-\conj_*)H_2(X)$ can be
 represented as a union of transversely intersecting oriented surfaces $S$ and $-\conj(S)$.
An equivariant smoothing of the intersection points produces
an embedded closed oriented surface $F\subset X$ which is
invariant under $\conj$ and has as the real locus $F\cap X_\R$ a collection of small circles around the real intersections of $S$ with $\conj(S)$.
Hence, $F\cap X_\R$
is null-homologous, which implies (3) if $X_\R$ is just a union of $\Rp2$ and spheres.

For proving (3) in the remaining cases, note that
for each real geometric vanishing cycle
$v$ we have $w_1(X_\R)\circ \bhv v= K_X\circ v \mod 2=0$, due to (\ref{respecting}).
For each pair $v_i,v_j$ of real geometric vanishing cycles we have similarly $\bhv v_i\circ \bhv v_j= v_i\circ v_j\mod 2$.
Therefore, $\bhv(\mathcal B)$ is contained in
the kernel of $w_1(X_\R):H_1(X_\R;\Z/2)\to\Z/2$ and generate there a subspace whose dimension is
equal to the rank of the non-degenerate part of the $\Z/2$-lattice $(H_2^-(X)\cap K_X^\perp)\otimes \Z/2$.
To check that the rank of the latter is equal to the rank of
the former kernel,
it remains to look through Table \ref{eigenlattices} and to
 notice that the rank of the non-degenerate part of the $\Z/2$-lattices $E_8\otimes \Z/2$, $E_7\otimes \Z/2$, $D_6\otimes \Z/2$, and $D_4\otimes \Z/2$
is equal to $8, 6, 4$, and $2$, respectively.
\end{proof}

\subsection{Choice of $\Pin^-$-structure on $X_\R$}\label{choice}
Consider now an arbitrary real
del Pezzo surface $X$ of degree 1 and its real graded anticanonical embedding $X\subset V=\PP(1,1,2,3)$
as a hypersurface given by equation (\ref{weighted-eq}).
Note that $X$ in $V$ does not contain the two singular points $\v_2,\v_3\in V$ and the normal bundle to
 $X_\R$ in $V_\R$ is trivial, since, in terms of equation (\ref{weighted-eq}),  $X_\R$ bounds in $V_\R$  a domain $W$ defined by inequality
 $w^2\le y^3+p_2(x_0,x_1)y^2+p_4(x_0,x_1)y+p_6(x_0,x_1)$. So, applying the descent map to the two $\Pin^-$-structures on
$V_\R\sm \{\v_2\}$ as in Lemma \ref{weighted},
with a coorientation of $X_\R$ given by the normal vector field of $X_\R$ directed outside of $W$, we get two natural induced
 $\Pin^-$-structures on $X_\R$.

Among these two $\Pin^-$-structures on $X_\R$ we choose and call {\it monic} the one
whose quadratic function $q_X:H_1(X_\R;\Z/2)\to\Z/4$ takes value 1
on the class $w_1^*$ dual to $w_1(X_\R)$. For the other $\Pin^-$-structure,
due to Lemma \ref{alternating-coorientation}
the quadratic function on $w_1^*$ will take the opposite value.
This implies that the chosen $\Pin^-$-structure on $X_\R$
does not depend on the graded anticanonical embedding considered.

\subsection{Proof of Theorem \ref{main-1}}
The monic $\Pin^-$-structure on $X_\R$
satisfies property (\ref{invar}),
since any real automorphism of $X$ is induced by a real automorphism of $V$
and any real automorphism or monodromy of $X$ must preserve $w_1(X_\R)$.
Our choice of the $\Pin^-$-structure gives $q_X(w_1^*)=1$, which together with
Lemma \ref{vanishing-on-vanishing} gives property (\ref{vanish}).

Property (\ref{sym}) in the case of $X_\R\ne \Rp2\dsum \K$ follows from (\ref{invar}) and (\ref{vanish}), since
the only non-oval component of the sextic $C_\R$ represents in $H_1(X_\R;\Z/2)$ an element dual to $w_1(X_\R)$, while all the other
components represent
the image by $\bhv$ of real geometric vanishing cycles (see Lemma \ref{contraction}).
In the case $X_\R=\Rp2\dsum \K$, we justify claim (\ref{sym}) in Lemma \ref{Klein-case} below.

As for the uniqueness of a $\Pin^-$-structure with the properties (\ref{invar})--(\ref{sym}),
the case of $X$ with $X_\R=\Rp2\dsum k\S^2, k\ge 0,$ is trivial, since $X_\R$ carries only two $\Pin^-$-structures and only for one of them
 $q_X(w_1^*)=1$.
If $X_\R= \Rp2\#\T^2\dsum \S^2$ or $X_\R=\Rp2\#(4-a)\T^2$ with $a\le 3$, then the uniqueness follows from
 Lemma \ref{distinguished bases}
 and vanishing of $\bhv$ on $(1-\conj_*)H_2(X)$.

In the remaining case of $X_\R=\Rp2\dsum \K$, we note first that the real locus $C_\R$ of
the sextic $C\subset Q$ consists of 3 connected components forming a nest surrounding the vertex of $Q_\R$.
Let us index these connected components as
$\Cal C_1, \Cal C_2, \Cal C_3$, so that $\Cal C_1, \Cal C_2$ bound an annulus in $Q_\R^+$ while $\Cal C_2, \Cal C_3$ bound
an annulus in $Q_\R^-$. Note that $\Cal C_1+\Cal C_2+\Cal C_3$ represents both in $H_1(X^+_\R;\Z/2)$ and $H_1(X^-_\R;\Z/2)$
an element dual to $w_1(X^\pm_\R)$, while
$\Cal C_1+\Cal C_2$ is homologous to a real geometric vanishing cycle (a bridge between $\Cal C_1, \Cal C_2$) in $H_1(X^+_\R;\Z/2)$ and $\Cal C_2+\Cal C_3$ to a real geometric vanishing cycle (a bridge between $\Cal C_2, \Cal C_3$) in $H_1(X^-_\R;\Z/2)$. Therefore, the properties (\ref{invar})--(\ref{sym}) leave only one possibility $q^\pm(\Cal C_1)=1, q^\pm(\Cal C_2)=-1, q^\pm(\Cal C_3)=1$.
This gives the uniqueness of the $\Pin^-$-structure, since the classes $[\Cal C_i]$, $i=1,2,3$, generate $H_1(X_\R;\Z/2)$.

\begin{lemma}\label{Klein-case}
If $X_\R=\Rp2\dsum \K$, then the property (\ref{sym}) of Theorem \ref{main-1} is satisfied for $q_X$.
\end{lemma}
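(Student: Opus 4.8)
The plan is to reduce the assertion, by means of properties (\ref{invar}) and (\ref{vanish}) together with the central symmetry of $Q$, to the determination of a single sign, and then to fix that sign by an explicit computation of the monic $\Pin^-$-structure inside the ambient weighted projective space. As explained just before the statement, $C_\R=\mathcal C_1\cup\mathcal C_2\cup\mathcal C_3$ is a nest around the vertex, and in each of $X^+_\R,X^-_\R\cong\Rp2\dsum\K$ the summand $\Rp2$ is the double cover of the cap of $Q^\pm_\R$ containing $\v$, while $\K$ double covers the annulus bounded by the two remaining circles. Write $\widetilde{\mathcal C_i}$ for the ramification circle over $\mathcal C_i$. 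Property (\ref{vanish}) gives $q_{X^+}(\mathcal C_1+\mathcal C_2)=0$ and $q_{X^-}(\mathcal C_2+\mathcal C_3)=0$ (these sums are the $\bhv$-images of the bridges), while the monic normalization $q_{X^\pm}(w_1^*)=1$ together with $\mathcal C_1+\mathcal C_2+\mathcal C_3=w_1^*$ forces $q_{X^+}(\mathcal C_3)=1$ and $q_{X^-}(\mathcal C_1)=1$. Since $\widetilde{\mathcal C_1}$ and $\widetilde{\mathcal C_2}$ lie over disjoint circles they are disjoint in $X^+_\R$, so $q_{X^+}(\mathcal C_1)+q_{X^+}(\mathcal C_2)=q_{X^+}(\mathcal C_1+\mathcal C_2)=0$, and likewise $q_{X^-}(\mathcal C_2)+q_{X^-}(\mathcal C_3)=0$. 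Setting $s:=q_{X^+}(\mathcal C_1)$, this records $q_{X^+}(\mathcal C_2)=-s$; the corresponding values on $X^-$ are handled next.

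Because the arrangement $\langle|||\rangle$ is symmetric (Theorem \ref{deform-sextic}), $Q$ carries a central symmetry $\rho$ that interchanges $\mathcal C_1\leftrightarrow\mathcal C_3$, fixes $\mathcal C_2$, and interchanges the two half-domains $Q^+_\R\leftrightarrow Q^-_\R$. Lifting $\rho$ to the double cover $X\to Q$ yields an isomorphism of real surfaces $(X,\conj^+)\to(X,\conj^-)$ (it swaps the two real structures precisely because it swaps the half-domains), and by the naturality of the construction of the monic $\Pin^-$-structure this lift carries $q_{X^+}$ to $q_{X^-}$. Consequently $q_{X^-}(\mathcal C_2)=q_{X^+}(\mathcal C_2)=-s$ and $q_{X^-}(\mathcal C_3)=q_{X^+}(\mathcal C_1)=s$. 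Collecting everything, $q_{X^+}(\mathcal C_3)=q_{X^-}(\mathcal C_1)=1$, $q_{X^+}(\mathcal C_2)=q_{X^-}(\mathcal C_2)=-s$, and $q_{X^+}(\mathcal C_1)=q_{X^-}(\mathcal C_3)=s$, so the assertion (\ref{sym}), namely $q_{X^+}(\mathcal C_i)=q_{X^-}(\mathcal C_i)$ for $i=1,2,3$, is equivalent to the single equality $s=1$.

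It remains to compute $s$, and this is the only genuinely nonformal step. Here I would use the ambient model supplied by Lemma \ref{weighted}: near $X^+_\R$ the real locus $V_\R\sm\{\v\}$ of $V=\PP(1,1,2,3)$ is diffeomorphic to $\Rp2\times\R$ and carries exactly two $\Pin^-$-structures, one of which descends, along the coorientation of $X^+_\R=\partial W$, to the monic structure. Both $\widetilde{\mathcal C_1}$ (lying in the Klein bottle) and $\widetilde{\mathcal C_3}$ (lying in $\Rp2$) represent the nontrivial class of $H_1(\Rp2\times\R;\Z/2)$, and the plan is to compare $q_{X^+}(\mathcal C_1)$ with the known value $q_{X^+}(\mathcal C_3)=1$ by tracking the descent of this one ambient $\Pin^-$-structure to the two components of $X^+_\R$ along these circles. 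The hard part is exactly this comparison of normal framings: one must check that the coorientation of $X^+_\R$ inside $\Rp2\times\R$ twists by an even number of half-turns between the two vertex-embracing circles—the subtlety being that the $\Rp2$-component closes up over the branch point $\widehat\v$ lying above $\v$ while the Klein bottle does not—so that the descent assigns the two circles equal values and hence $s=q_{X^+}(\mathcal C_1)=q_{X^+}(\mathcal C_3)=1$. Granting this framing computation, (\ref{sym}) follows, which finishes the proof.
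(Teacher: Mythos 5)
Your formal bookkeeping is fine, but the proposal fails at two points, one of which is fatal. The non-fatal one first: the central symmetry $\rho$ you invoke does not exist. A central symmetry of $Q=\PP(1,1,2)$ has the form $y\mapsto 2r_2(x_0,x_1)-y$ for some real binary quadratic form $r_2$, and if it preserves the sextic $C=\{y^3+p_2y^2+p_4y+p_6=0\}$ (which is irreducible, being nonsingular and connected), then comparing coefficients of $y^3$ shows that the defining polynomial must be \emph{anti}-invariant; after the substitution $y'=y-r_2$ this says the polynomial is odd in $y'$, i.e.\ of the form $y'(y'^2+a_4)$, so $C$ would be reducible and hence singular. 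Theorem \ref{deform-sextic} classifies sextics up to deformation \emph{and} central symmetry; it does not produce an automorphism of any individual pair $(Q,C)$. This flaw is repairable because the symmetry is not needed: applying your bridge-plus-normalization argument to $X^-$ as well (using the bridge between $\Cal C_2,\Cal C_3$) gives $q_{X^-}(\Cal C_1)=1$ and $q_{X^-}(\Cal C_3)=-q_{X^-}(\Cal C_2)$; setting $s':=q_{X^-}(\Cal C_3)$, property (\ref{sym}) becomes equivalent to the two mirror statements $s=1$ and $s'=1$, each asserting that the value on a core circle of the Klein bottle equals the value on the core of the $\Rp2$-summand.

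The fatal point is that this last equality, which you defer with ``granting this framing computation,'' is the entire content of the lemma; everything before it is formal. It is exactly what the paper's proof supplies: in the model $V_\R\sm\{\v_2\}\cong\Rp2\times\R$, the domain $W$ meets $Q_\R$ in the annulus between $\Cal C_1,\Cal C_2$ together with the cap bounded by $\Cal C_3$, so the outward normal $\nu$ to $X_\R=\partial W$ points in the negative $\R$-direction along $\Cal C_1$ and $\Cal C_3$ and in the positive one along $\Cal C_2$; consequently the cooriented M\"obius bands $(B_1,\nu)$ and $(B_3,\nu)$ are both isotopic in $V_\R\sm\{\v_2\}$ to $(B_2,-\nu)$, and Lemma \ref{alternating-coorientation} yields the alternation $q_X(\Cal C_1)=q_X(\Cal C_3)=-q_X(\Cal C_2)$, which the normalization $q_X(w_1^*)=1$, $w_1^*=\Cal C_1+\Cal C_2+\Cal C_3$, pins down to $(1,-1,1)$ for both members of the Bertini pair. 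Note also that your description of the needed check is not coherent as stated: $\Cal C_1$ and $\Cal C_3$ lie on \emph{different connected components} of $X^+_\R$ (the Klein bottle and the $\Rp2$), so there is no path in the surface along which a coorientation can ``twist between the two circles''; the comparison must be made by an ambient isotopy of the cooriented band neighborhoods (or against a fixed reference band, say one around $\Rp1\times\{t\}\subset\Rp2\times\R$), and that is precisely the step you would still have to carry out. As it stands, the proposal proves only that (\ref{sym}) is equivalent to $s=s'=1$, not that it holds.
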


\begin{proof}
Recall that in our topological model $V_\R\sm\{\v_2\}\cong\Rp2\times\R$, the factor $\Rp2$ is the real locus of the coordinate plane $\PP(1,1,3)\subset \PP(1,1,2,3)$
defined by $x_2=0$. The real locus $\PP^1_\R$ of the coordinate line $\PP^1=\{x_2=x_3=0\}$ lies in $\Rp2$ as a topological line (that is one-sided) and does not contain $\v_3$.
Another coordinate plane,
$Q=\PP(1,1,2)$, has punctured real locus $Q_\R\sm\{\v_2\}\cong \PP^1_\R\times\R$.
The intersection of $Q_\R$ with the domain
$W=\{w^2\le y^3+p_2(x_0,x_1)y^2+p_4(x_0,x_1)y+p_6(x_0,x_1)\}\subset V_\R$
splits into the union of an annulus (between $\Cal C_1$ and $\Cal C_2$) and a
region bounded by $\Cal C_3$ and the vertex $\v_2\in Q_\R$.
In particular, the vector field $\nu$ on $X_\R$  considered in our construction of $\Pin^-$-structures, which is normal to $X_\R$ and directed outward $W$, is positive on $\Cal C_2$ and negative on $\Cal C_1$ and $\Cal C_3$ compared with the choice of a positive vector field along the factor $\R$ in $\Rp2\times\R$.
Therefore, the M\"obius bands $B_1, B_3$ lying on $X_\R$ and having $\Cal C_1, \Cal C_3$ as core and cooriented by $\nu$ are isotopic to the M\"obius bands $B_2\subset X_\R$ having $\Cal C_2$ as a core and cooriented by $-\nu$.
Hence, by Lemma \ref{alternating-coorientation} the values of $q_X$ on $\Cal C_1$, $\Cal C_2$, $\Cal C_3$ are alternating.
This implies the statement, since $w_1^*(X^\pm_\R)=\Cal C_1+\Cal C_2+\Cal C_3$.
\end{proof}

\begin{remark}\label{normalization}
For a graded anticanonical embedding of $X_\R$ into $V=\PP_\R(1,1,2,3)$
as a hypersurface $w^2=y^3+p_2(x_0,x_1)y^2+p_4(x_0,x_1)y+p_6(x_0,x_1)$,
the monic $\Pin^-$ structure on $X_\R$ is obtained by the descent map from the $\Pin^-$-structure on
$V_\R\sm \{\v_2\}\cong \PP_\R(1,1,3)\times \R$, $\PP_\R(1,1,3)\cong\Rp2$,
which can be described ``independently of $X$'' as the one whose descent
to $\PP_\R(1,1,3)=\{y=0\}\cap \PP_\R(1,1,2,3)$,
equipped with coorientation directed outward the domain $y\ge0$ in $V_\R$,
gives value $q([\PP_\R(1,1)])=1$
on the coordinate line $\PP_\R(1,1)=\{w=0\}\cap \PP_\R(1,1,3)$
(dual to $w_1(\PP_\R(1,1,3)$).
It is then straightforward to check (using Lemma \ref{Klein-case} if
$X_\R= \Rp2\dsum \K$ and trivial in the other cases)
that the descent of this {\it primary}
$\Pin^-$-structure on  $V_\R\sm \{\v_2\}$
to $X_\R$, equipped with the coorientation directed outward
$W=\{w^2\le y^3+p_2(x_0,x_1)y^2+p_4(x_0,x_1)y+p_6(x_0,x_1)\}$, satisfies the relation $q_X(w_1^*)=1$.
\end{remark}

\subsection{Signed count of real lines}
We let $\q(x)=q_X(\bhv(x))\in\Z/4$ for all $x\in H^-_2(X)$ and set
$s: I_\R(X)\to\{+1,-1\}$,
$s(l_e)=i^{\q (e)}$
(which is well-defined due to Corollary \ref{bijection-R}). Note, that $s(l_e)=\Im(i^{\q(l_e)})$ since $l_e=-K_X-e$ and $ \q(K_X)=1$.
\begin{lemma}\label{Klein-count-lemma}
Any real del Pezzo surface $X$ of degree 1 with $X_\R\cong \Rp2\+\K$
admits real roots $e_i\in R_\R(X)$, $i=1,2,3,4$ that form
in the $D_4$-lattice
$H_2^-(X)\cap K_X^\perp$
a Coxeter basis of roots
such that $\q(e_i)=0$ for all $i$.
\end{lemma}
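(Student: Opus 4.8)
The plan is to transport the statement into the explicit quadratic form on $H_1(X_\R;\Z/2)$ that is already available in the Klein case, and then to settle a small combinatorial problem inside the root system $D_4$. From the proof of Theorem \ref{main-1} in the case $X_\R=\Rp2\dsum\K$ I take the three components $\Cal C_1,\Cal C_2,\Cal C_3$ of $C_\R$: they form a $\Z/2$-basis of $H_1(X_\R;\Z/2)$, are pairwise disjoint, and carry the values $q_X(\Cal C_1)=1$, $q_X(\Cal C_2)=-1$, $q_X(\Cal C_3)=1$. Since every quadratic function satisfies $q_X(x)\equiv x\cdot x\pmod2$, the $\Cal C_i$ are $\Z/2$-orthonormal and $q_X$ is thereby known on all of $H_1(X_\R;\Z/2)$. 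In particular $w_1(X_\R)(\Cal C_i)=1$ for every $i$, so $\ker\bigl(w_1(X_\R)\bigr)=\{0,\ \Cal C_1+\Cal C_2,\ \Cal C_2+\Cal C_3,\ \Cal C_1+\Cal C_3\}$, and a direct evaluation shows that $q_X$ vanishes on this kernel except on the single non-zero class $\Cal C_1+\Cal C_3$, where it equals $2$. (The two classes on which $q_X=0$ are the ones carried by the bridges joining consecutive components of $C_\R$.)

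I would then pin down the restriction of $\bhv$ to $\Lambda:=H_2^-(X)\cap K_X^\perp\cong D_4$. All $24$ roots of $\Lambda$ lie in $H_2^-(X)$ and are therefore real roots, and by Proposition \ref{Smith} together with the relation $w_1(X_\R)(\bhv v)=K_X\cdot v\bmod2$ (used in the proof of Lemma \ref{distinguished bases}) the whole of $\Lambda$ is sent into $\ker w_1(X_\R)$. Since $\bhv$ respects the mod-$2$ intersection forms, $\bhv v=0$ forces the class of $v$ into the radical of $D_4\otimes\Z/2$, and by \eqref{2-kernel} the kernel of $\bhv|_\Lambda$ is $\Lambda\cap(1-\conj_*)H_2(X)$; as $\dim\ker w_1(X_\R)=2$ while that radical is also $2$-dimensional, a dimension count forces $\bhv$ to descend to a form-preserving isomorphism $(D_4\otimes\Z/2)/\mathrm{rad}\xrightarrow{\ \sim\ }\ker w_1(X_\R)$. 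In the standard model $D_4=\{x\in\Z^4:\textstyle\sum_i x_i\ \text{even}\}$ this radical is spanned by the classes of $2e_1$ and $e_1+e_2+e_3+e_4$, neither of which is a root; hence no root is annihilated, the $24$ roots split into three classes of eight according to the three partitions of $\{1,2,3,4\}$ into two pairs, and these three classes are carried bijectively onto the three non-zero elements of $\ker w_1(X_\R)$. Consequently $\q(e)=q_X(\bhv e)$ equals $0$ on the sixteen roots of two partition-classes and $2$ on the eight roots of the remaining, ``elliptic'', class.

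It then remains to choose a Coxeter basis missing the elliptic class. The standard basis $e_1-e_2,\ e_2-e_3,\ e_3-e_4,\ e_3+e_4$ of $D_4$ uses only the partitions $\{12,34\}$ and $\{14,23\}$, so it avoids the partition $\{13,24\}$. The coordinate permutations are automorphisms of $\Lambda\cong D_4$ and act on the three partitions as the full symmetric group $S_3$; applying one that sends $\{13,24\}$ to the elliptic partition turns the standard basis into a Coxeter basis $e_1,\dots,e_4$ of $\Lambda$ of type $D_4$ whose four roots all lie in non-elliptic classes, that is, $\q(e_i)=0$ for every $i$. By Corollary \ref{bijection-R} these $e_i$ are genuine real roots, which is exactly the assertion.

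The main obstacle is the middle step, namely making the Smith-theoretic dictionary between the roots of $D_4$ and $H_1(X_\R;\Z/2)$ precise: one must verify through \eqref{2-kernel} that $\ker\bhv|_\Lambda$ is exactly the radical, that no root is annihilated, and that the roots therefore distribute evenly ($8/8/8$) over the three non-zero classes of $\ker w_1(X_\R)$. Once this dictionary is established the concluding triality argument is elementary, precisely because exactly one of the three classes is elliptic.
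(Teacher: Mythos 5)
Your proof is correct, but it takes a genuinely different route from the paper's. The paper's own argument is geometric and very short: Figure \ref{M-e8}(f) exhibits four \emph{disjoint} real geometric vanishing cycles $V^1,\dots,V^4$ whose real loci are meridians of $\K$, so $\q([V^i])=0$ by Theorem \ref{main-1}(2); since any four pairwise orthogonal roots in $D_4$ have sum divisible by $2$, the element $e_0=-\tfrac12([V^1]+\dots+[V^4])$ is a root completing $[V^1],[V^2],[V^3]$ to a Coxeter basis, and if $\q(e_0)=2$ one replaces $e_0$ by $e_0+[V^4]$, which shifts $\q$ by $2$; one of the two bases then works. Your argument needs no new geometric input beyond Theorem \ref{main-1} itself: you reconstruct $q_X$ on all of $H_1(X_\R;\Z/2)$ from the forced values $q_X(\Cal C_1)=1$, $q_X(\Cal C_2)=-1$, $q_X(\Cal C_3)=1$, pin down $\bhv$ on the eigenlattice $\Lambda=H_2^-(X)\cap K_X^\perp$ via Proposition \ref{Smith}, and finish with triality combinatorics in $D_4$. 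What your route buys is strictly more information: the full distribution of $\q$ over $R_\R(X)$ (the $24$ real roots falling $8/8/8$ onto the three nonzero classes of $\ker w_1(X_\R)$, exactly one class being elliptic), which in particular recovers the counts $h=16$, $e=8$ of Table \ref{table-lines} for this deformation class; what the paper's route buys is brevity and independence from this lattice-theoretic bookkeeping.

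Two small points to tighten. First, to conclude that no root is annihilated it is not enough to observe that your two generators $2e_1$ and $e_1+e_2+e_3+e_4$ of the radical are not roots: you must check that no root is congruent modulo $2D_4$ to any nonzero radical class, i.e.\ that no root lies in $2D_4^*=2\Z^4+\Z(1,1,1,1)$. This is immediate---a root $\pm e_i\pm e_j$ has exactly two odd coordinates, while every element of $2D_4^*$ has all coordinates of equal parity---but it should be said. Second, the identification of $\ker(\bhv|_\Lambda)$ uses \eqref{Viro-kernel} (together with \eqref{2-kernel} and the orthogonal splitting $H_2^-(X)=\Z K_X\oplus\Lambda$, valid since $K_X^2=1$), not \eqref{2-kernel} alone; alternatively, your own dimension count (kernel of the induced map lands in the radical by \eqref{respecting}, image lands in $\ker w_1(X_\R)$, and $4-2=2$) already suffices and makes the appeal to \eqref{Viro-kernel} unnecessary.
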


\begin{proof}
Figure \ref{M-e8}(f) shows four disjoint real
vanishing cycles $V^1,\dots,V^4\subset X$ whose real loci $V^i_\R$
are meridians of $\K$. Let $e_i=[V_i]\in H_2^-(X)$.
By Theorem \ref{main-1}, $\q([V^i])=0$.
For any quadruple of pairwise non-intersecting roots in a $D_4$-lattice, their sum is divisible by 2, and so
$e_0=-\frac12(e_1+\dots+e_4)$ belongs to the lattice and therefore, together with
a triple $e_1$, $e_2$, $e_3$, forms a Coxeter basis.
Another basis is formed by the same triple and $e_0'=e_0+e_4$.
Then $\q(e_0')=\q(e_0)+\q(e_4)+2=\q(e_0)+2$ and, thus, one of these two bases satisfies the conditions of Lemma.
\end{proof}

\begin{proposition}\label{special-basis}
The root system $R_\R(X)$ of any real del Pezzo surface $X$ of degree 1 admits a Coxeter basis
$\mathcal B$ such that $\q(e)=0$ for all $e\in \mathcal B$.
\end{proposition}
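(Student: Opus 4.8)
The plan is to assemble the statement from the two structural lemmas already in hand, treating the single Klein-bottle class on its own. First I would note that $R_\R(X)$ is exactly the set of roots (the norm $-2$ vectors) of the lattice $H_2^-(X)\cap K_X^\perp$, so that a Coxeter basis of roots of that lattice is the same thing as a Coxeter basis of $R_\R(X)$. It therefore suffices to produce, in each deformation class, a Coxeter basis of $H_2^-(X)\cap K_X^\perp$ on which $\q$ vanishes.

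The core observation, which I expect to carry most of the weight, is that $\q$ vanishes automatically on \emph{both} kinds of basis vector furnished by Lemma \ref{distinguished bases}. If $e\in\mathcal B$ is a real geometric vanishing cycle, then $\q(e)=q_X(\bhv e)=0$ by property (\ref{vanish}) of Theorem \ref{main-1}. If instead $e\in(1-\conj_*)H_2(X)$, then $e\in\ker\bhv$ by (\ref{Viro-kernel}), so $\bhv e=0$ and hence $\q(e)=q_X(0)=0$. Consequently, for every deformation class with $X_\R\ne\Rp2\dsum\K$ the Coxeter basis supplied by Lemma \ref{distinguished bases} already satisfies $\q(e)=0$ for all its elements, disposing of ten of the eleven classes at once (in the class $X_\R=\Rp2\dsum4\S^2$ the lattice is $0$, the basis is empty, and there is nothing to check).

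The one remaining class, $X_\R=\Rp2\dsum\K$, is precisely the one excluded from Lemma \ref{distinguished bases}, and here I would invoke Lemma \ref{Klein-count-lemma} directly: it exhibits a Coxeter basis of the $D_4$-lattice $H_2^-(X)\cap K_X^\perp$ with $\q\equiv 0$. I expect this to be the only genuine obstacle, since the four meridian vanishing cycles $V^1,\dots,V^4$ do not themselves assemble into a root basis of $D_4$. The resolution is the lattice arithmetic already recorded in Lemma \ref{Klein-count-lemma}: passing from $e_0=-\frac12(e_1+\dots+e_4)$ to $e_0'=e_0+e_4$ shifts the value of $\q$ by $2$, which forces one of the two candidate bases to have $\q$ identically zero. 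All that then remains is the routine bookkeeping of matching the lattice types listed in Table \ref{eigenlattices} against the two mechanisms above.
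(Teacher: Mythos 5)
Your proposal is correct and follows essentially the same route as the paper's own proof: the Klein-bottle case via Lemma \ref{Klein-count-lemma}, and all other cases via the Coxeter basis of Lemma \ref{distinguished bases}, on which $\q$ vanishes because vanishing cycles are handled by Theorem \ref{main-1}(\ref{vanish}) and elements of $(1-\conj_*)H_2(X)$ lie in $\ker\bhv$ by (\ref{Viro-kernel}). The only difference is that you spell out the identification of $R_\R(X)$ with the roots of $H_2^-(X)\cap K_X^\perp$ and the internal arithmetic of the Klein-bottle lemma, which the paper leaves implicit.
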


\begin{proof}
If follows from Lemma \ref{Klein-count-lemma} if $X_\R\cong \Rp2\+\K$ and from Lemma \ref{distinguished bases} in the other cases,
since $\q(e)=0$
for vanishing cycles by Theorem \ref{main-1}(2),
and for $e\in (1-\conj_*)H_2(X)$ from (\ref{Viro-kernel})
in Proposition \ref{Smith}.
\end{proof}

\begin{lemma}\label{cancelation-lemma}
Assume that $X$ is a real del Pezzo surface of degree 1 and
$\mathcal B\subset R_\R(X)$ is
a Coxeter basis such that $\q(e)=0$ for all $e\in\mathcal B$. Then,
for any pair of real roots $f,g\in R_\R(X)$
which are positive with respect to $\mathcal B$
and adjacent in the corresponding Hasse diagram,
the values $\q(f),\q(g)\in\{0,2\}\subset\Z/4$ are distinct, and therefore,
we have $s(l_f)+s(l_g)=0$.
\end{lemma}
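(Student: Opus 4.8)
The plan is to exploit the fact that $\q$ is itself a $\Z/4$-valued quadratic refinement of the mod $2$ intersection form on $H_2^-(X)$, and then to observe that adjacency in the Hasse diagram forces $f$ and $g$ to differ by a single element of the Coxeter basis $\mathcal B$.

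First I would record the quadratic identity for $\q$. Since $\bhv : H_2^-(X) \to H_1(X_\R;\Z/2)$ is a homomorphism satisfying $\bhv x \cdot \bhv y = x\cdot y \bmod 2$ (Proposition \ref{Smith}, relation (\ref{respecting})), and $q_X$ is a quadratic function, composition gives
\[
\q(x+y)=\q(x)+\q(y)+2\,(x\cdot y)\pmod 4
\]
for all $x,y\in H_2^-(X)$. Taking $y=x$ shows $\q(x)\equiv (x\cdot x)\bmod 2$, so any root $e$ (with $e^2=-2\equiv 0\bmod 2$) has $\q(e)\in\{0,2\}$. This already yields $\q(f),\q(g)\in\{0,2\}$, the first half of the claim.

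Next I would invoke the structure of the root poset. Since $f$ and $g$ are positive with respect to $\mathcal B$ and adjacent in the Hasse diagram, they are related by a covering relation; for the simply-laced root systems occurring here (see Table \ref{eigenlattices}) this means, after possibly interchanging $f$ and $g$, that $g=f+e_0$ for some simple root $e_0\in\mathcal B$. Because $f$ and $g$ are both roots, expanding $g^2=(f+e_0)^2=f^2+2(f\cdot e_0)+e_0^2$ and using $f^2=g^2=e_0^2=-2$ forces $f\cdot e_0=1$.

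Finally, feeding this into the quadratic identity and using the hypothesis $\q(e_0)=0$ gives
\[
\q(g)=\q(f)+\q(e_0)+2\,(f\cdot e_0)=\q(f)+2\pmod 4,
\]
so $\q(f)\ne\q(g)$, with one value $0$ and the other $2$. Since $s(l_e)=i^{\q(e)}$, this yields $s(l_g)=i^{\q(f)+2}=-\,i^{\q(f)}=-\,s(l_f)$, that is $s(l_f)+s(l_g)=0$. The only step requiring genuine input from root-system theory — and hence the main point to pin down carefully — is the identification of Hasse-diagram adjacency with differing by a simple root of $\mathcal B$; once that is granted, everything follows from the quadratic property of $\q$ supplied by Smith theory, and the computation is immediate.
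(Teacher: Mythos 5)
Your proof is correct and takes essentially the same route as the paper's: both identify Hasse-diagram adjacency with $g=f+e$ for a simple root $e\in\mathcal B$ satisfying $f\cdot e=1$, and then apply the quadratic identity $\q(f+e)=\q(f)+\q(e)+2(f\cdot e)$ together with $\q(e)=0$ to get $\q(g)=\q(f)+2$, hence $s(l_f)+s(l_g)=0$. Your additional verification that $\q$ is even on roots (so the values lie in $\{0,2\}$) and the derivation of $f\cdot e=1$ from the root lengths are details the paper leaves implicit, but they do not change the argument.
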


\begin{proof}
We may suppose that $f<g$ in the Hasse diagram, then $g=f+e$ with $e\in\mathcal B$, $f\cdot e=1$.
It gives
$\q(f)=\q(g+e)=\q(g)+\q(e)+2g\cdot e=\q(g)+2\in\Z/4$.
Since $\q([l_f]=\q(f+K_X)=\q(f)+\q(K_X)=\q(f)+1$ and similarly $\q([l_g])=\q(g)+1$, we conclude that
 $s(l_f)+s(l_g)=0$.
\end{proof}

\begin{lemma}\label{matching-lemma}
Let $L$ be a root
lattice
$E_8, E_7, D_6, D_4, A_1$, or a direct sum of such lattices,
and let $L^+$ be the poset of its positive roots determined by a choice of a Coxeter basis $\mathcal B\subset L$ of roots.
Then $L^+\sm \mathcal B$ can be split into pairs of adjacent roots in the Hasse diagram of $L^+$.
\end{lemma}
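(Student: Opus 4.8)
\textbf{Proof proposal for Lemma \ref{matching-lemma}.}

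The plan is to reduce the statement to the case of a single irreducible root lattice, and then to produce, for each of the types $E_8$, $E_7$, $D_6$, $D_4$, $A_1$, an explicit pairing of the non-simple positive roots with the property that two roots in a pair are adjacent in the Hasse diagram (i.e.\ differ by a single simple root). First I would observe that the problem is additive: if $L=L_1\oplus L_2$ with Coxeter bases $\mathcal B_1,\mathcal B_2$, then a positive root of $L$ lies entirely in one of the summands, the Hasse diagram of $L^+$ is the disjoint union of those of $L_1^+$ and $L_2^+$, and $L^+\sm\mathcal B = (L_1^+\sm\mathcal B_1)\sqcup(L_2^+\sm\mathcal B_2)$. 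Hence it suffices to match each irreducible factor separately, and I only need to handle the five irreducible types listed.

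For each irreducible type the key structural fact is that the Hasse diagram of positive roots is a graded poset, ranked by the \emph{height} $\mathrm{ht}(\alpha)=\sum c_i$ where $\alpha=\sum c_i e_i$ is the expansion in the simple roots $\mathcal B=\{e_i\}$, with a covering relation $\alpha\lessdot\beta$ precisely when $\beta-\alpha$ is a simple root. The simple roots $\mathcal B$ are exactly the elements of height $1$, so $L^+\sm\mathcal B$ consists of all positive roots of height $\ge 2$; I must pair these up so that paired roots are joined by an edge of the Hasse diagram. A clean way to do this is to pick, for each non-simple positive root $\beta$, a canonical simple root $e=e(\beta)$ with $\beta-e$ again a positive root, and then run a matching argument on the resulting graph. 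Concretely, I would try to exhibit an explicit involution-free matching: for $D_4$ one can just write down the $12$ positive roots, discard the $4$ simple ones, and pair the remaining $8$ by hand; for $A_1$ the set $L^+\sm\mathcal B$ is empty, so there is nothing to match; the cases $D_6$, $E_7$, $E_8$ (of sizes $36-6=30$, $63-7=56$, $120-8=112$, all even) are the substantial ones and are conveniently encoded by the Hasse diagrams reproduced in Figures \ref{matching} and \ref{OtherHasse} of the paper.

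The cleanest uniform argument, which I would adopt, is to fix the \emph{highest root} $\tilde\alpha$ and use the chain structure along a fixed simple direction. Choose a simple root $e_0\in\mathcal B$ such that the $e_0$-string through the root lattice organizes $L^+\sm\mathcal B$ into a disjoint union of directed chains in the Hasse diagram, each chain having \emph{even} length, and then pair consecutive entries within each chain. Adjacent entries of such a chain differ by $e_0$, hence are Hasse-adjacent, which is exactly what is required. The task therefore becomes: verify that the height-$\ge 2$ part of each diagram decomposes into even chains under a suitable choice of $e_0$. Since each diagram is finite and fully drawn in the referenced figures, this is a finite inspection rather than a computation of infinite scope.

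The main obstacle I anticipate is \emph{not} the existence of a matching in the abstract—the total count $|L^+\sm\mathcal B|$ is even in every case, and a maximal matching in the Hasse graph is easy to find greedily—but rather guaranteeing a \emph{perfect} matching, i.e.\ that the greedy/chain procedure leaves no unmatched root. Abstractly this is a question of whether the Hasse graph restricted to the non-simple positive roots admits a perfect matching, which one could in principle attack via Hall's condition, but verifying Hall's condition by hand is tedious. The pragmatic route, and the one I would commit to in the write-up, is to bypass Hall entirely by giving the explicit even-chain decomposition for each of $D_4,D_6,E_7,E_8$ read off from Figures \ref{matching} and \ref{OtherHasse}; the subtlety to watch is that a naive choice of $e_0$ can produce an \emph{odd} chain (leaving a single unpaired vertex), so the real content is choosing $e_0$—or adjusting the pairing near the top of the diagram, around the highest root—so that every chain has even length.
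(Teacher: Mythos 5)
Your proposal is correct and takes essentially the same route as the paper's proof: reduce to irreducible summands (the Hasse diagram of a direct sum is the disjoint union of those of the summands), observe that the $A_1$ case is void, and exhibit an explicit perfect matching for each of $D_4$, $D_6$, $E_7$, $E_8$ by finite inspection of the diagrams in Figures \ref{matching} and \ref{OtherHasse}, which is exactly what the paper does. One caveat: the ``uniform'' variant you sketch --- pairing along the string of a single fixed simple root $e_0$ --- cannot succeed for any choice of $e_0$ (already in $D_4$, an outer node leaves the two non-adjacent roots $e_1+e_2$ and the highest root $e_1+2e_2+e_3+e_4$ unmatched, while the central node leaves six roots unmatched), so the explicit case-by-case splitting you fall back on is the actual proof, not merely a convenience.
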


\begin{proof}
For $E_8$,
the Hasse diagram and its splitting are
shown on Fig. \ref{matching}.  For $E_7, D_6,$ and  $D_4$, whose Hasse diagrams are shown on Fig. \ref{OtherHasse}, appropriate splittings are completely similar.
For $A_1$, the statement is void. At last,  the Hasse diagram of a direct sum is a disjoint union of Hasse diagrams of the summands.
\end{proof}

\vskip-2mm
\begin{figure}[h!]
\caption{The Hasse diagram of $E_8$ and its splitting subdiagram}\label{matching}
\vskip2mm
\resizebox{0.8\textwidth}{!}{
\beginpicture
\put{\bf Hasse diagram of $ E_8$.} at -50 160
\put{\bf Root-matching splitting for $ E_8$.} at  240 160

\put{\beginpicture
\setcoordinatesystem units <.5cm,.35cm>
\input E8-Hasse.tex
\endpicture
} at -16 1

\put{\beginpicture
\setcoordinatesystem units <.5cm,.35cm>
\input E8-matching.tex
\endpicture} at 280 1
\endpicture
}
\end{figure}

\begin{figure}[h!]
\caption{Hasse Diagrams for $E_7$, $D_6$ and $D_4$}\label{OtherHasse}
\includegraphics[width=0.3\textwidth]{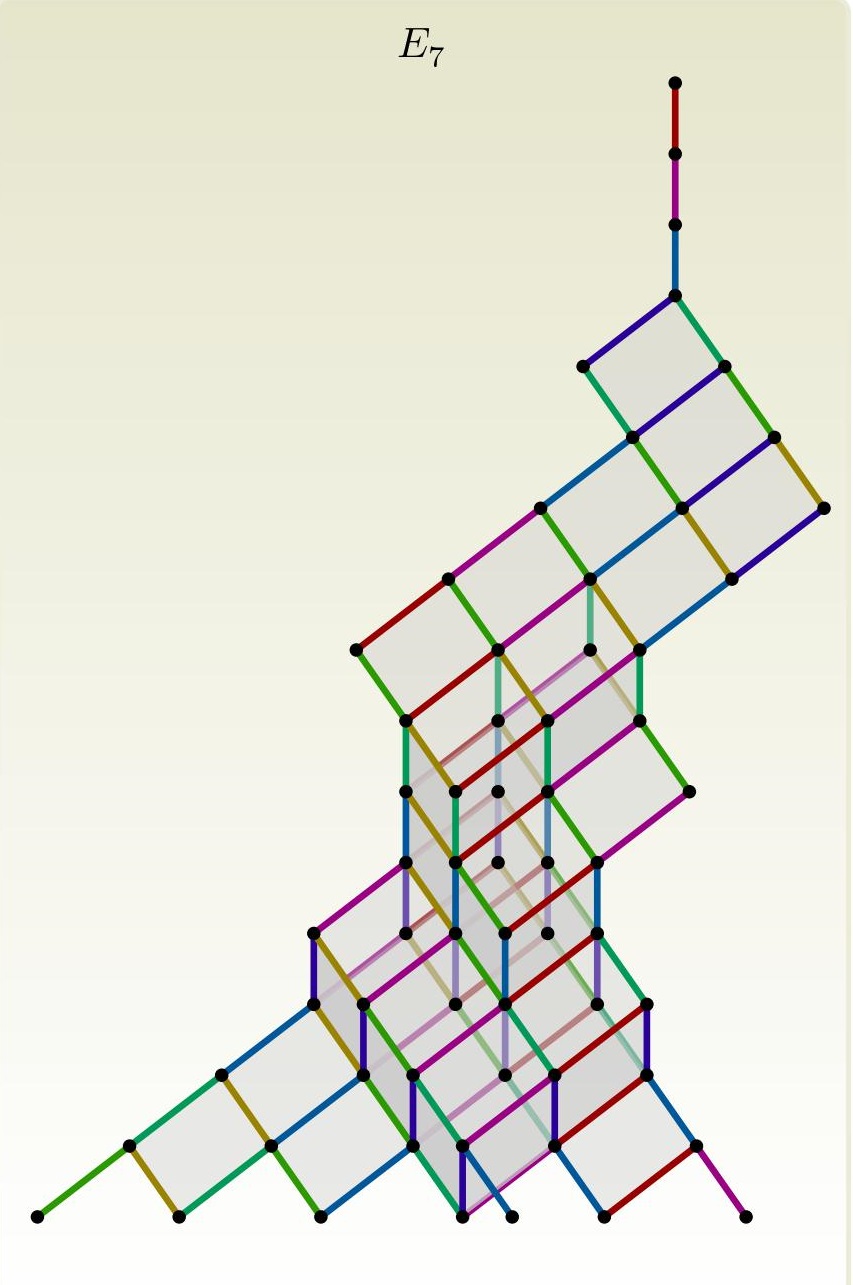}
\hbox{\includegraphics[width=0.25\textwidth]{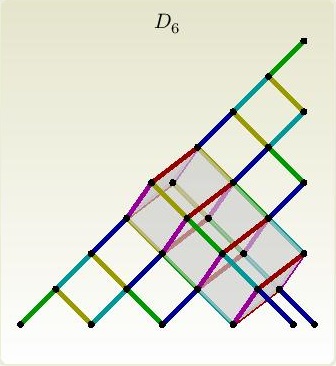}}
\hbox{\includegraphics[width=0.15\textwidth]{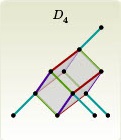}}
\end{figure}

\begin{proposition}\label{general-count}
For any degree 1 real del Pezzo surface $X$ the signed count of its real lines
gives twice the rank of the real root system:
$$
\sum_{l\in L_\R(X)}s(l) = 2\rk(R_\R (X))=2(rk(H_2^-(X))-1).
$$
\end{proposition}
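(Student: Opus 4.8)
The plan is to transport the signed count to the real root system through the bijections of Corollary \ref{bijection-R} and then to combine the special basis of Proposition \ref{special-basis} with the Hasse-diagram matching of Lemma \ref{matching-lemma}. Since $l\mapsto e$ with $[l]=-K_X-e$ sets up a bijection $L_\R(X)\to R_\R(X)$, I would first rewrite
$$\sum_{l\in L_\R(X)}s(l)=\sum_{e\in R_\R(X)}s(l_e)=\sum_{e\in R_\R(X)}i^{\q(e)}.$$

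The conceptual heart is a sign symmetry between a root and its opposite. Because $\bhv$ takes values in $H_1(X_\R;\Z/2)$ we have $\bhv(-e)=\bhv(e)$, hence $\q(-e)=\q(e)$ and therefore $s(l_{-e})=s(l_e)$. Fixing the Coxeter basis $\mathcal B\subset R_\R(X)$ provided by Proposition \ref{special-basis} and splitting $R_\R(X)=R_\R^+\sqcup(-R_\R^+)$ into positive and negative roots relative to $\mathcal B$, this symmetry yields
$$\sum_{e\in R_\R(X)}s(l_e)=2\sum_{e\in R_\R^+}s(l_e).$$

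It remains to evaluate the sum over positive roots. On the basis itself $\q\equiv0$, so $s(l_e)=i^0=1$ for every $e\in\mathcal B$, which contributes $|\mathcal B|=\rk R_\R(X)$. For the remaining positive roots I would invoke Table \ref{eigenlattices}: the lattice $H_2^-(X)\cap K_X^\perp$, whose root system is exactly $R_\R(X)$, is always a direct sum of copies of $E_8,E_7,D_6,D_4,A_1$, so Lemma \ref{matching-lemma} applies and splits $R_\R^+\sm\mathcal B$ into pairs adjacent in the Hasse diagram; by Lemma \ref{cancelation-lemma} each such pair $\{f,g\}$ contributes $s(l_f)+s(l_g)=0$. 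Hence $\sum_{e\in R_\R^+}s(l_e)=\rk R_\R(X)$, and after doubling $\sum_{l\in L_\R(X)}s(l)=2\rk R_\R(X)$.

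Finally, for the rank identity I would use that $\conj_* K_X=-K_X$, so that $K_X\in H_2^-(X)$; as $K_X^2=1$, this class spans a rank-one unimodular summand, and $H_2^-(X)$ decomposes, up to finite index, as $\Z K_X\oplus(H_2^-(X)\cap K_X^\perp)$. Consequently $\rk R_\R(X)=\rk(H_2^-(X)\cap K_X^\perp)=\rk H_2^-(X)-1$, which gives the second equality. The substance of the argument is carried entirely by the three preceding lemmas; the only genuinely new point is the symmetry $s(l_{-e})=s(l_e)$, which is what pairs each positive root with its negative and forces the answer to be the even number $2\rk R_\R(X)$.
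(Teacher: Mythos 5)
Your proof is correct and follows essentially the same route as the paper's: the bijection of Corollary \ref{bijection-R}, the symmetry $s(l_{-e})=s(l_e)$ to halve the sum over $R^+_\R(X)$, the special basis of Proposition \ref{special-basis}, and the pairing via Lemmas \ref{matching-lemma} and \ref{cancelation-lemma}. The only difference is that you spell out details the paper leaves implicit (the appeal to Table \ref{eigenlattices} to apply the matching lemma, and the splitting $H_2^-(X)\supseteq \Z K_X\oplus(H_2^-(X)\cap K_X^\perp)$ for the rank identity), which is a welcome but not substantively different addition.
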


\begin{proof}
Using Corollary \ref{bijection-R} and $s(l_{-e})=s(l_e)$ (since $\q(-e)=\q(e)$)
we rewrite the sum
$$\sum_{l\in L_\R(X)}s(l)=\sum_{e\in R_\R(X)}s(l_e)=2\sum_{e\in R^+_\R(X)}s(l_e)$$
where $R^+_\R(X)$ denotes the set of positive roots with respect to a
Coxeter basis $\mathcal B\subset R_\R(X)$.
Due to Proposition \ref{special-basis} we may choose $\mathcal B$
so that $\q(e)=0$, and, thus,
get $s(l_{e})=1$ for all $e\in \mathcal B$.
By Lemma \ref{matching-lemma} the roots in $R^+_\R(X)\sm\mathcal B$ split into pairs of adjacent ones,
and each pair contributes $0$ into the above sum by Lemma \ref{cancelation-lemma}.
Thus,
$$\sum_{e\in \widehat{R}^+_\R(X)}s(l_e)=\sum_{e\in \mathcal B}s(l_e)=\rk(R_\R(X)).$$
\end{proof}

\subsection{Proof of Theorem \ref{main-2}}
According to Proposition \ref{general-count} we have
$$
h(X^+)-e(X^+)+h(X^-)-e(X^-)=2(\rk(R_\R (X^+))+\rk(R_\R (X^-))).
$$
On the other hand
$$\rk(R_\R (X^+))+\rk(R_\R (X^-))=
\rk(K_X^\perp\cap H_2^-(X))+ \rk(K_X\cap H_2^+(X)) =\rk(K_{X}^\perp)=8.
$$

\begin{remark}\label{not-all-vanishing}
On each real del Pezzo surface of degree 1 with $X_\R$ containing a component of non-positive Euler characteristic, there exist real roots
which are not
real geometric vanishing cycles.
Indeed, as it follows from Proposition \ref{special-basis}, Lemma \ref{cancelation-lemma} and the Hasse diagrams shown on Figures \ref{matching}-\ref{OtherHasse},
on any such surface
there exist real roots
$v$ with $q_X(\bhv(v))=2$.
\end{remark}

\subsection{Separate count of hyperbolic and elliptic lines}\label{separate}
Combining Proposition \ref{general-count} with the list of
eigenlattices in Table \ref{eigenlattices} and the known values for the number of real lines in each deformation class (the third row in Table \ref{table-lines}) we obtain
the values
of $h$ and $e$ separately (two last rows in Table \ref{table-lines}).

\begin{table}[h]
\caption{Count of real lines on $X$}\label{table-lines}
\resizebox{\textwidth}{!}{
\hbox{\boxed{\begin{tabular}{c||ccccccc}
Smith type of $X_\R$&$M$&$(M-1)$&$(M-2)$&$(M-3)$&$(M-4)$&$(M-2)Ia$&$(M-2)Ib$\\
\hline\hline
Topology of $X_\R$&$\Rp2\#4\T^2$&$\Rp2\#3\T^2$&$\Rp2\#2\T^2$&$\Rp2\#\T^2$&$\Rp2$&$\Rp2\dsum \K$& $\Rp2\#\T^2\dsum \S^2$\\
\hline
\# real lines on $X_\R$&240&126&60&26&8&24&24\\
\# hyperbolic lines on $X_\R$&128&70&36&18&8&16&16\\
\# elliptic lines on $X_\R$&112&56&24&8&0&8&8\\
\end{tabular}}}}
\resizebox{0.36\textwidth}{!}{
\hskip-57.5mm\hbox{\boxed{\begin{tabular}{c||cccc}
Smith type of $X_\R$&$M$&$(M-1)$&$(M-2)$&$(M-3)$\\
\hline\hline
Topology of $X_\R$&$\Rp2\dsum 4\S^2$&$\Rp2\dsum 3\S^2$&$\Rp2\dsum 2\S^2$&$\Rp2\dsum \S^2$\\
\hline
\# real lines  on $X_\R$&0&2&4&6\\
\# hyperbolic lines  on $X_\R$&0&2&4&6\\
\# elliptic lines  on $X_\R$&0&0&0&0\\
\end{tabular}}}}
\end{table}

In the case $X_\R=\Rp2\dsum  \K$ the curve $C_\R\subset Q_\R$ has arrangement $\la |||\ra$. Using consecutive numeration of the
components $\Cal C_1,\Cal C_2,\Cal C_3$ of $C_\R$ as in the proof of Lemma \ref{Klein-case} and the alternation
of values of $q_X(\Cal C_i)$ established there, it is easy to deduce that each of the $8$ elliptic lines has odd
number of common points (counting multiplicities)
 with $\Cal C_2$, and even number with $\Cal C_1$ and $\Cal C_3$. Among the 16 hyperbolic lines precisely 8 has odd intersection with $\Cal C_1$ and even with $\Cal C_2$, $\Cal C_3$,
 while the other 8 has odd intersection with $\Cal C_3$ and even with $\Cal C_1$, $\Cal C_2$.
In particular, $8$ hyperbolic lines are contained in the $\Rp2$-component of $X_\R$ and the remaining 16 lines in the $\K$-component.

\section{From lines to tritangents}\label{variations}

\subsection{Signed count of tritangents}\label{counting-tritangents}
We say that a plane $\Pi\subset \PP^3$
transverse to $Q$, along with the plane section $A=\Pi\cap Q $,
are {\it tritangent} to a sextic $C\subset Q$ if
the intersection divisor $\Pi \circ C=A\circ C$ contains each point with even multiplicity.

We denote by $T(Q,C)$ the set of such tritangent sections. It contains,
as is well-known, $120$ elements, and the projection $L(X)\to T(Q,C)$
induced by the double covering $\pi : X\to Q$ is a two-to-one map.

Over $\R$, for each real $A\in T(Q,C)$ its real locus $A_\R$
lies either in $Q^+_\R$ or in $Q^-_\R$ and
so lifts to a pair of real lines on $X^+_\R$ or on $X^-_\R$ respectively.
We denote by $T_\R(Q^\pm,C)$ the corresponding sets of real tritangents and obtain induced two-to-one maps $L_\R(X^\pm)\to T_\R(Q^\pm,C)$.

By Theorem \ref{main-1}(1), for each $A\in T_\R(Q^\pm,C)$, the both real lines that form $\pi^{-1}(A)$ are of the same type.
This allows to split real tritangents in two species also. Namely, we call a real tritangent {\it hyperbolic} and count it with sign plus
(respectively, call {\it elliptic} and count with sign minus), if the tritangent
section lifts to a pair of real hyperbolic lines (respectively, elliptic lines).
Theorem \ref{main-2} implies then the following result.

\begin{theorem}\label{th-tritangents}
Assume that a real sextic curve $C\subset Q$ is a transversal intersection of a
real quadratic cone $Q\subset \PP^3$ (whose base is non-singular and
has non-empty real locus)
with a real cubic surface. Then the difference between
the numbers of
hyperbolic and elliptic tritangent sections in $T_\R(Q^+,C)\cup T_\R(Q^-,C)$ is 8.
\qed\end{theorem}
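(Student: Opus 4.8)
The plan is to reduce the statement directly to Theorem \ref{main-2} by transporting the signed count of lines through the two-to-one maps $L_\R(X^\pm)\to T_\R(Q^\pm,C)$. Write $H^\pm$ and $E^\pm$ for the numbers of hyperbolic and elliptic tritangent sections in $T_\R(Q^\pm,C)$, so that the quantity to be computed is $(H^++H^-)-(E^++E^-)$. Here I use that each real tritangent section has its real locus in exactly one of the two domains $Q^+_\R,Q^-_\R$, so that $T_\R(Q^+,C)$ and $T_\R(Q^-,C)$ are disjoint and their hyperbolic (resp. elliptic) cardinalities simply add.

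First I would record the factor-of-two relation between lines and tritangents. By Theorem \ref{main-1}(1) the two real lines forming $\pi^{-1}(A)$ are always of the same species, so the two-to-one map $L_\R(X^\pm)\to T_\R(Q^\pm,C)$ restricts to a two-to-one map from the hyperbolic real lines onto the hyperbolic tritangents, and likewise from the elliptic lines onto the elliptic tritangents. Consequently $h(X^\pm)=2H^\pm$ and $e(X^\pm)=2E^\pm$. It is precisely this compatibility of the splitting with the covering $\pi$ that makes the species of a tritangent well-defined; without it the two-to-one map could not be separated into a hyperbolic and an elliptic part, and the bookkeeping below would not close.

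Then I would substitute these four equalities into the identity of Theorem \ref{main-2}, namely $h(X^+)-e(X^+)+h(X^-)-e(X^-)=16$, obtaining $2\bigl(H^+-E^++H^--E^-\bigr)=16$, and divide by $2$ to conclude $(H^++H^-)-(E^++E^-)=8$, as claimed.

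The only point requiring genuine care — the real content beyond bookkeeping — is the well-definedness of the species of a real tritangent, which is exactly Theorem \ref{main-1}(1): both branches of $\pi$ over a real tritangent section carry lines of equal type. Everything else is the elementary observation that a two-to-one cover multiplies each individual count by $2$, so that the deformation-invariant value $16$ of Theorem \ref{main-2} halves to the value $8$ for tritangents.
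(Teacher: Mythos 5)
Your proposal is correct and coincides with the paper's own argument: the paper likewise defines the species of a real tritangent via Theorem \ref{main-1}(1) (the two real lines in $\pi^{-1}(A)$ have the same type), notes the two-to-one maps $L_\R(X^\pm)\to T_\R(Q^\pm,C)$, and then halves the invariant count of $16$ from Theorem \ref{main-2} to obtain $8$. The explicit bookkeeping $h(X^\pm)=2H^\pm$, $e(X^\pm)=2E^\pm$ that you spell out is exactly what the paper leaves implicit behind its \qed.
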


A Combination of Theorem \ref{th-tritangents} with Table \ref{table-lines} yields a separate count
of hyperbolic and elliptic tritangents for each real deformation class of
real sextics $C\subset Q$.

\begin{table}[h]
\caption{Count of real tritangent section}\label{tritangents-table}
\resizebox{\textwidth}{!}{
\hbox{\boxed{\begin{tabular}{c||ccccccc}
Arrangement 
of $C_\R$ in $Q_\R$&
$\langle 4\vert 0\rangle$&$\langle 3\vert 0\rangle$&$\langle 2\vert 0\rangle$&$\langle 1\vert 0\rangle$&$\langle 0\vert 0\rangle$&$\langle \vert\vert \vert\rangle$
&$\langle 1\vert 1\rangle$\\
\hline
total number
&120&64&32&16&8&24&24\\
hyperbolic
&64&36&20&12&8&16&16\\
elliptic
&56&28&12&4&0&8&8\\
\end{tabular}}}}
\end{table}

\begin{remark}\label{not-for-generic}
The division of tritangent sections into hyperbolic and elliptic does not extend from sextics on
a quadratic cone $Q$ to those on
non-singular real quadric surfaces.
To see it, we can take a sextic $C_\R\subset Q_\R$ of type $\langle 2\vert 0\rangle$
(see Subsection \ref{def-class})
and pick real tritangent planes $\Pi_1, \Pi_2$ touching each of the 3 components of $C_\R$
as shown
on Figure \ref{sign-change}, so that one of them is hyperbolic and another is elliptic.
A small real perturbation of $Q$ into a real ellipsoid $Q'$ can be accompanied
by a real deformation of $C_\R$  and $\Pi_1, \Pi_2$ into
a real sextic $C_\R'\subset Q'_\R$ and its two real tritangents $\Pi'_1,\Pi'_2 $
that meet each of the 3 ovals of $C'_\R$.
Finally, it is not difficult to notice that
these tritangents can be transformed to each other by a real deformation of $C'_\R$ in $Q'_\R$.
\end{remark}

\begin{figure}[h!]
\caption{
Two tritangent sections of opposite sign}
\label{sign-change}
\includegraphics[width=0.5\textwidth]{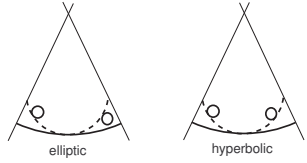}
\end{figure}
\vskip-5mm

\subsection{Identification of the sign via coorientation}\label{coorientation}
A coorientation of a sextic curve
$C_\R\subset Q_\R$ is determined by
choosing one of the two regions, $Q_\R^+$ or $Q_\R^-$, bounding $C_\R$.
Similarly, to coorient
 $A_\R$ in $Q_\R$ for $A\in T_\R(Q^\pm,C)$, we choose one of the two nappes of the cone $Q_\R$ bounded by $A_\R$.
In both cases, the direction of coorientation is outward the bounding region.

 We say that chosen in such a way coorientations of $C_\R$ and $A_\R$ are {\it coherent}, if the corresponding region $Q_\R^\pm$ approaches the vertex of $Q_\R$ along the
nappe chosen to coorient $A_\R$.
A tangency point of $A_\R$ with $C_\R$ is called {\it positive} if coherent coorientations at this point coincide and {\it negative}
otherwise.

\begin{remark}
If $C_\R$ has arrangement different from $\la |||\ra$, then any tangency point with the component
$\Cal C_0\subset C_\R$, embracing the vertex in $Q_\R$, is positive. A tangency with an oval, $\Cal C_i\subset C_\R$ is negative if $A_\R$ separates $\Cal C_i$ and $\Cal C_0$ inside $Q_\R^+$ and positive otherwise.
\end{remark}

\begin{theorem}\label{coorientation rule}
A section $A\in T_\R(Q^+,C)\cup T_\R(Q^-,C)$ is hyperbolic (respectively, elliptic) if the number of their
positive tangency points is odd (respectively, even).
Accordingly, a real line
$l\in L_\R(X^\pm)$ is hyperbolic (respectively, elliptic) if its projection
$A=\pi(l)\in T_\R(Q^\pm,C)$ is hyperbolic (respectively, elliptic).
\end{theorem}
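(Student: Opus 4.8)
The plan is to reduce the theorem to a local, purely topological parity count that matches the value of the quadratic function $q_X$ on the class of the line. Recall from Subsection \ref{choice} that the monic $\Pin^-$-structure on $X_\R$ is obtained by descent from a $\Pin^-$-structure on $V_\R\sm\{\v_2\}\cong\Rp2\times\R$, with $X_\R$ cooriented outward the domain $W=\{w^2\le y^3+\dots\}$. Since the double covering $\pi:X\to Q$ is branched along $C$, the preimage $\pi^{-1}(A)$ of a real tritangent section splits $X^\pm_\R$, and the real line $l$ maps isomorphically (away from the branch locus) onto $A_\R$. The first step is therefore to express $q_X([l_\R])$ in terms of data visible downstairs on $(Q_\R,C_\R)$, namely the way the coorienting normal field of $X_\R$ compares across $A_\R$ with the descent data near each tangency point.

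The key observation I would carry out is that each tangency point of $A_\R$ with $C_\R$ is a point where the line $l_\R\subset X_\R$ crosses the branch curve, and near such a point the local picture is exactly the standard one analysed in Lemma \ref{Klein-case}: the normal field $\nu$ directed outward $W$ reverses its sense relative to the ambient $\R$-factor of $\Rp2\times\R$ precisely according to whether the region $Q_\R^\pm$ approaches the vertex along the chosen nappe, i.e. according to whether the tangency is positive or negative in the sense defined just above. Thus each positive tangency contributes a half-twist (hence a summand $1$ modulo the relevant reduction) to the value of $q_X$ on $[l_\R]$, exactly as the coorientation switch in Lemma \ref{alternating-coorientation} and the M\"obius-band computation in Lemma \ref{Klein-case} predict. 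Summing these local contributions, the parity of the number of positive tangency points determines $q_X([l_\R])\bmod 2$, and since $l$ is hyperbolic iff $q_X(l_\R)=1$ (odd) and elliptic iff $q_X(l_\R)=-1$ (also odd but of opposite sign), the parity count pins down the species. The second, concluding assertion about $l\in L_\R(X^\pm)$ is then immediate, since by Theorem \ref{main-1}(1) both lines in $\pi^{-1}(A)$ share the same value of $q_X$, so the species of $l$ is read off from $A$.

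The main obstacle I anticipate is making the local contribution computation fully rigorous and coherent: one must verify that the descent of the $\Pin^-$-structure, restricted to the $S^1$ core of the strip of $X_\R$ lying over $A_\R$, accumulates exactly the number of half-twists equal to the number of positive tangencies, with no stray global contribution. Concretely, I would trivialise the normal bundle of $l_\R$ in $X_\R$ away from the tangency points, track the framing induced by the outward $\nu$ against the primary $\Pin^-$-structure on $V_\R\sm\{\v_2\}$ described in Remark \ref{normalization}, and check that the only monodromy of this framing arises at the tangency points, each positive one flipping it. The coherence normalisation (which nappe, which region) introduced before the theorem is precisely what guarantees the local flips are counted with the correct sign, so the book-keeping must be tied carefully to that convention rather than to an arbitrary choice. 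Once this local-to-global framing count is established, the theorem follows by comparing its parity with the defining condition $q_X(l_\R)\in\{1,-1\}$ for hyperbolicity versus ellipticity.
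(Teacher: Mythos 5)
Your proposal follows essentially the paper's own route: realize the monic $\Pin^-$-structure by descent from $V_\R\sm\{\v_2\}\cong\Rp2\times\R$, compare the outward normal $\nu_X$ of $X_\R=\partial W$ along the line with a reference coorientation coming from the product structure, and convert tangency points into a twist count. But two steps that carry the actual content are missing or garbled, and as written the argument cannot reach the stated correspondence. The first gap is that your count is never anchored. A parity of ``flips'' can only show that lines whose tritangents have different parities of positive tangencies are of different species; to decide \emph{which} parity is hyperbolic you must compute the value of the quadratic function in the untwisted reference situation. The paper does exactly this: both lines over $A_\R$ lie in $\PP_\R(1,1,3)=\{y=0\}\cong\Rp2$ (they are $\{y=0,\ w=\pm q_3\}$ with $q_3^2=p_6$) and are isotopic there to $A_\R$, and by Remark \ref{normalization} the primary structure descends to this $\Rp2$, cooriented outward $\{y\ge0\}$ by the field $\nu$, to a quadratic function with $q([A_\R])=1$; hence $q([l^k_\R])=1$ is the base value, and the monic value is $q_X(l^k_\R)=\pm q([l^k_\R])$ according to the parity of the relative twisting of $\nu_X$ against $\nu$. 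You cite Remark \ref{normalization} only as a source of a framing, never extract the base value, and never use the isotopy of $l_\R$ with $A_\R$ inside $\Rp2$; without that anchor the parity count determines the species only up to an overall sign.

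The second gap is the local bookkeeping. Since every real line is one-sided in $X_\R$, the value $q_X(l_\R)\in\{1,3\}\subset\Z/4$ is always odd, so your claim that the parity of positive tangencies ``determines $q_X([l_\R])\bmod 2$'' is vacuous: a flip changes $q_X(l_\R)$ by $2\bmod 4$ (a sign change, as in Lemma \ref{alternating-coorientation}), not by a ``summand $1$''. More seriously, \emph{where} the flips occur depends on the chosen reference, and this is precisely the delicate point: along $l^k_\R$ the fields $\nu_X$ and $\nu$ are collinear exactly at the tangency points (where $X_\R$ is tangent to the plane $\{y=0\}$), co-directed at positive tangencies and oppositely directed at negative ones --- a local computation governed by the sign of $p_4(c_j)$, as in Proposition \ref{sign_rule} --- and, because the normal plane bundle of $l_\R$ in $V_\R$ is non-orientable, the twisting parity equals the parity of the \emph{oppositely directed} collinearities. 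Thus with the paper's reference $\nu$ the flips sit at the negative tangencies (and the base value is $+1$); your rule ``each positive tangency flips'' is consistent only with the opposite reference $-\nu$, whose base value is $-1$. Either convention yields the theorem, but only after fixing one of them and verifying both the base value and the flip rule inside it, which your proposal does not do. Finally, Lemma \ref{Klein-case} is not ``the standard local picture'' of a tangency: it is a global comparison of cooriented M\"obius neighborhoods of whole components of $C_\R$ in the $\Rp2\dsum\K$ case, and it does not supply the local tangency analysis needed here.
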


\begin{proof} As before, let $C$ be defined in $Q=\PP(1,1,2)$ by a real equation $y^3+p_2y^2+p_4y+p_6=0$. Allowing $y$-coordinate change,
we may assume that $A$ coincides with the coordinate line $\PP(1,1)=\{y=0\}$ and
that $A_\R$ lies in the region
$\{y^3+p_2y^2+p_4y+p_6\ge 0\}\subset Q_\R$.
Then, for the covering real del Pezzo surface $X$ we may take as a model the hypersurface $\{w^2=y^3+p_2y^2+p_4y+p_6\}\subset V=\PP(1,1,2,3)$.

Under such a coordinate choice, the coorientation of $A_\R$ via the outward direction of
$\{y\ge0\}\subset Q_\R$ is coherent with the coorientation of $C_\R$ via the outward direction of
$\{y^3+p_2y^2+p_4y+p_6\ge 0\}\subset Q_\R$.
Also, the both lines $l^1_\R, l^2_\R\subset X_\R$ covering $A_\R$ lie in $\{y=0\}\cap \PP_\R(1,1,2,3)=\PP_\R(1,1,3)\cong \Rp2$.

 Let $\nu$ be a vector field along the smooth locus $\PP_\R(1,1,3)\sm\{\v_3\}\subset\Rp2$ directed outward the domain $\{y\ge0\}\subset \PP_\R(1,1,2,3)$.
 In accordance with Remark \ref{normalization}, the primary
 $\Pin^-$ structure in $V_\R$ descends via field $\nu$ to a $\Pin^-$-structure on $\Rp2$ whose quadratic function $q$ takes value $q([A_\R])=1$.
Then, $q([l^{k}_\R])=1$, since $l^{k}_\R$ are isotopic to $A_\R$ in $\Rp2$.

On the other hand, by definition, the monic $\Pin^-$ structure on $X_\R$ is obtained by descend of the primary structure via
vector field $\nu_X$ normal to  $X_\R=\partial W\subset V_\R$ and outward-directed
from $W$.
The fields $\nu$ and $\nu_X$ differ by some number, $n_k$, of twists as we go along
$l^k_\R$ and we have $q_X(l^k_\R)=q(l^k_\R)$ if $n_k$ is even and $q_X(l^k_\R)=-q(l^k_\R)$ otherwise.

To find $n_k$, note that $\nu_X$ is collinear to $\nu$ along $C_\R$,
so, collinearity on $l^k_\R$ happen precisely
at the tangency points of $A_\R$ with $C_\R$. Furthermore, $\nu_X$ is co-directed with $\nu$ at the positive tangency points and oppositely directed for negative.
Now, to complete the proof it remains to notice that the parity of $n_k$ is the parity of the number of oppositely directed collinearities.
\end{proof}

\subsection{The sign rule via resultants and real quadratic forms}\label{hermitian}
We can reformulate the coorientation rule of Theorem \ref{coorientation rule} in a more algebraic fashion, in terms of
polynomials $p_i$ from the equation
$
y^3+p_2(x_1,x_2)y^2+p_4(x_1,x_2)y+ p_6(x_1,x_2)=0
$
of a real non-singular sextic $C\subset Q=\PP(1,1,2)$, if the coordinates are chosen so that a given
real tritangent section $A\subset Q$ has equation $y=0$ (the convention from the proof of Theorem
\ref{coorientation rule}). Like before, we may assume without loss of generality that
$A_\R\subset Q_\R^+$, or equivalently
$p_6\ge0$ (otherwise, substitute $y\to-y$). This means that $p_6(x_0,x_1)=q_3^2(x_0,x_1)$ for some real cubic polynomial $q_3$.
Let $c_1,c_2,c_3\in \PP^1$ be the 3 roots of $q_3$ (they are either all real, or one is real and two are imaginary complex conjugate).
Note that $p_4(c_k)\ne 0$ for $k=1,2,3$, since otherwise the point $c_k\in \PP^1=\PP(1,1)\subset Q=\PP(1,1,2)$ would be singular for $C$.

\begin{proposition}\label{sign_rule}
A real tritangent $A$ is hyperbolic if
$p_4$ is positive at an odd number of real roots $\{c_1,c_2,c_3\}\cap \PP^1_\R$
(roots are counted with multiplicities)
and elliptic otherwise.
\end{proposition}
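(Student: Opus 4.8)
The plan is to reduce Proposition~\ref{sign_rule} to the coorientation rule established in Theorem~\ref{coorientation rule}, by translating the geometric count of positive tangency points into the algebraic condition on the sign of $p_4$ at the roots of $q_3$. First I would fix the coordinate normalization carried over from the proof of Theorem~\ref{coorientation rule}: the section $A$ is $\{y=0\}$, the sextic $C$ is $\{y^3+p_2y^2+p_4y+p_6=0\}$, and $A_\R\subset Q_\R^+$, which forces $p_6=q_3^2\ge 0$. Under this normalization, Theorem~\ref{coorientation rule} tells us that $A$ is hyperbolic precisely when the number of \emph{positive} tangency points is odd, so the entire task is to identify these positive tangency points with the real roots $c_k$ at which $p_4(c_k)>0$.

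The key computation is local analysis of $C_\R$ near a tangency point. Along $A_\R=\{y=0\}$, the branching sextic meets $A$ where $y=0$ solves the cubic, i.e.\ where $p_6(x_0,x_1)=0$, which happens exactly at the roots $c_k$ of $q_3$ (each a double root of $p_6$). So the tangency points of $A_\R$ with $C_\R$ are precisely the real $c_k$. Next I would examine, near such a $c_k$, on which side of $A_\R$ the curve $C_\R$ locally lies. Solving the cubic $y^3+p_2y^2+p_4y+p_6=0$ for small $y$ with $p_6$ vanishing to second order and $p_4(c_k)\ne 0$, the relevant small branch behaves like $y\approx -p_6/p_4$ to leading order. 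Since $p_6\ge 0$ everywhere and vanishes to order two at $c_k$, the sign of the nearby $y$-value (hence whether $C_\R$ dips into $\{y>0\}=Q_\R^+$ or into $\{y<0\}=Q_\R^-$) is governed by the sign of $-1/p_4(c_k)$, that is, by the sign of $p_4(c_k)$.

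I would then match this local side-of-$A_\R$ information to the definition of a positive tangency point given in Subsection~\ref{coorientation}: a tangency is positive when the coherent coorientations of $A_\R$ and $C_\R$ coincide, which by the convention there means $Q_\R^+$ approaches the vertex along the chosen nappe precisely when $C_\R$ locally stays on the appropriate side. Concretely, the parity of the number of $c_k$ with $p_4(c_k)>0$ must equal the parity of the number of positive tangency points. Summing over the real roots $\{c_1,c_2,c_3\}\cap\PP^1_\R$ and invoking Theorem~\ref{coorientation rule} then gives the claimed equivalence: $A$ is hyperbolic iff $p_4>0$ at an odd number of real $c_k$.

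The main obstacle I anticipate is the bookkeeping of signs in the local picture: one must be careful to verify that ``$C_\R$ locally lies in $Q_\R^+$'' translates correctly into ``positive tangency'' under the coherence convention, rather than its opposite, and that the case of a single real $c_k$ with two imaginary conjugate roots (where only one tangency point occurs) is handled consistently with the all-real case. I would pin this down by testing the correspondence on an explicit model—e.g.\ a configuration where one can compute both the sign of $p_4(c_k)$ and the separation/non-separation of the touched oval from the embracing component directly—thereby fixing the global sign once and for all and confirming that no overall parity flip has been introduced.
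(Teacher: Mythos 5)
Your proposal is correct and takes essentially the same route as the paper's own proof: reduce to Theorem~\ref{coorientation rule}, identify the tangency points of $A_\R$ with $C_\R$ as the real roots of $q_3$, and read off from the sign of $p_4(c_j)$ the side of $A_\R$ on which the local branch of $C_\R$ lies --- your expansion $y\approx -p_6/p_4$ is exactly the paper's local model $p_4(c_j)y+x^2=0$. Two small points to tidy up: the nappe $\{y>0\}$ is not $Q_\R^+$ (the latter is the region $\{y^3+p_2y^2+p_4y+p_6\ge 0\}$ bounded by $C_\R$ and containing $A_\R$), and the final sign identification you defer to an explicit model is done directly in the paper --- for $p_4(c_j)>0$ the branch lies in $\{y\le 0\}$, so the outward coorientation of $C_\R$ (from $Q_\R^+$) and the coherent outward coorientation of $A_\R$ (from the nappe $\{y\ge 0\}$) both point in the $-y$ direction at $c_j$, making the tangency positive rather than negative.
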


\begin{proof} If $c_j$ is real, then, in appropriate local coordinates centered at $c_j$, the curve $C$ is defined by equation $p_4(c_j)y+x^2=0$ (here we use the positivity
of $p_6$ on $A_\R$). Thus, for $p_4(c_j)>0$ (resp. $p_4(c_j)<0$) the local branch of $C_\R$ is contained in $y\le 0$ (resp. $y\ge 0$).
Therefore, in the first case the coorientations introduced in Section \ref{coorientation} are coherent at the point $c_j$, and they are opposite in the second case.
It remains to combine this with Theorem \ref{coorientation rule}.
\end{proof}

\begin{cor} If the resultant $Res(p_4,q_3)$, where $q_3^2=p_6$, is positive (resp. negative) then
the tritangent $A=\{y=0\}$ and the lines $l_1,l_2$
covering this tritangent are hyperbolic (resp. elliptic).
\qed \end{cor}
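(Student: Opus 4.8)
The plan is to reduce the corollary to Proposition \ref{sign_rule}, which is already proved, by showing that the parity of the number of real roots $c_j$ of $q_3$ at which $p_4$ is positive is governed by the sign of $\mathrm{Res}(p_4,q_3)$. First I would recall the standard factorization of the resultant: if $q_3(x_0,x_1)=\kappa\prod_{j=1}^{3}(x_1-c_j x_0)$ (working in an affine chart where $x_0\neq 0$ and writing $c_j$ for the affine roots), then up to a positive factor coming from leading coefficients, $\mathrm{Res}(p_4,q_3)$ is proportional to $\prod_{j=1}^{3} p_4(c_j)$. The key point is that this product is a \emph{real} number even though the $c_j$ need not all be real, so I would examine how the possibly-imaginary roots contribute.

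Next I would split into the two cases allowed by the excerpt's parenthetical remark after the definition of the $c_j$: either all three roots are real, or one is real and the remaining two are complex conjugate. In the first case, $\prod_{j} p_4(c_j)$ is a product of three real numbers, and its sign is negative exactly when an odd number of the factors $p_4(c_j)$ are negative, i.e.\ positive exactly when an odd number are positive (since there are three factors in total, odd-many-positive is equivalent to even-many-negative). In the second case, writing the conjugate pair as $c,\bar c$, the factor $p_4(c)p_4(\bar c)=|p_4(c)|^2>0$ is strictly positive because $p_4$ has real coefficients and $p_4(c)\neq 0$; hence the sign of the whole product is just the sign of $p_4$ at the single real root. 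I should note along the way that the nonvanishing $p_4(c_j)\neq 0$ guaranteed in the setup ensures $\mathrm{Res}(p_4,q_3)\neq 0$, so the strict positive/negative dichotomy is genuinely exhaustive.

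Combining the two cases, I would observe that in every case the sign of $\mathrm{Res}(p_4,q_3)$ (equivalently, the sign of $\prod_j p_4(c_j)$) is positive precisely when $p_4$ is positive at an odd number of the \emph{real} roots among $\{c_1,c_2,c_3\}$: the complex-conjugate pair always contributes a positive factor and so never affects the parity, while each real root contributes according to the sign of $p_4$ there. This is exactly the hyperbolicity criterion of Proposition \ref{sign_rule}. Finally, since $q_3^2=p_6$, the hypothesis $p_6\ge 0$ on $A_\R$ used throughout is automatically met, and by Theorem \ref{coorientation rule} the two lines $l_1,l_2$ covering the tritangent $A=\{y=0\}$ share its type, yielding the stated conclusion.

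I expect the only genuine subtlety to be the bookkeeping of signs and leading coefficients in the resultant formula: one must verify that the positive multiplicative constants relating $\mathrm{Res}(p_4,q_3)$ to $\prod_j p_4(c_j)$ do not flip the sign, and that multiplicities of repeated real roots are counted consistently with Proposition \ref{sign_rule}. This is a routine but careful computation rather than a conceptual obstacle, and once the resultant is correctly normalized the parity argument above closes the proof immediately.
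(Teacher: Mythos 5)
Your proposal is correct and follows exactly the route the paper intends: the corollary is stated with no separate proof precisely because it reduces to Proposition \ref{sign_rule} via the standard identity $\mathrm{Res}(p_4,q_3)=(\mathrm{lead}\,q_3)^4\prod_j p_4(c_j)$, with conjugate root pairs contributing positive factors so that the sign is governed by the real roots alone. Your case analysis and the check that the leading-coefficient factor is positive are just the details the paper leaves to the reader.
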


The sign rule of Proposition \ref{sign_rule} can be also reformulated in terms of
a real symmetric $\conj$-equivariant bilinear form $\beta$ on a 3-dimensional real vector space
$$
V=\{f:\{c_1,c_2,c_3\}\to \C \,\, \text{such that}\,\, f\circ \conj = \bar f\},$$
$$\beta (f,g)=\sum_{j=1}^3 p_4(c_j)f(c_j) g(c_j).
$$

\begin{cor}
If a Gram determinant of $\beta$ is positive
(resp. negative), then the tritangent $A$ and the lines $l_1,l_2$ covering $A$ are hyperbolic (resp. elliptic).
\end{cor}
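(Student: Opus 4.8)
The plan is to derive the statement from the preceding corollary (the sign rule via the resultant), so that it suffices to compare the sign of a Gram determinant of $\beta$ with the sign of $\operatorname{Res}(p_4,q_3)$. Since $\operatorname{Res}(p_4,q_3)$ equals $\prod_{j=1}^{3}p_4(c_j)$ up to the positive factor given by a power of the leading coefficient of $q_3$, the whole task reduces to identifying the sign of a Gram determinant of $\beta$ with the sign of $\prod_{j=1}^{3}p_4(c_j)$. Once this is done, the preceding corollary gives hyperbolicity exactly when this common sign is $+$ and ellipticity when it is $-$, and the corresponding statement for the two covering lines $l_1,l_2$ follows from Theorem \ref{coorientation rule}.

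First I would make $\beta$ explicit. Evaluation $f\mapsto(f(c_1),f(c_2),f(c_3))$ identifies $V\otimes_\R\C$ with $\C^3$ and carries $\beta$ to the diagonal form with entries $p_4(c_1),p_4(c_2),p_4(c_3)$, so that its determinant in these evaluation coordinates is $\prod_{j=1}^{3}p_4(c_j)$, a nonzero real number (recall $p_4(c_j)\ne0$) whose sign is that of $\operatorname{Res}(p_4,q_3)$. Equivalently, evaluation identifies $V$ with the étale $\R$-algebra $E=\R[t]/(q_3)$ and presents $\beta$ as the twisted trace form $\beta(f,g)=\operatorname{Tr}_{E/\R}(\pi\,fg)$, where $\pi\in E$ is the class of $p_4$; in this language $\prod_{j=1}^{3}p_4(c_j)=N_{E/\R}(\pi)$. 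When all three $c_j$ are real the evaluation coordinates are already a real basis of $V$, the Gram matrix is the real diagonal matrix with entries $p_4(c_j)$, and a Gram determinant of $\beta$ is literally $\prod_{j=1}^{3}p_4(c_j)$; this case is therefore immediate.

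The step that requires care, and which I expect to be the main obstacle, is the passage from the complex evaluation coordinates to a genuine real basis of $V$ in the presence of a conjugate pair of tangency points, say $c_1\in\R$ and $c_3=\overline{c_2}$. Here a real basis is supplied by $f\mapsto(f(c_1),\Re f(c_2),\Im f(c_2))$, and in it the Gram matrix acquires, besides the entry $p_4(c_1)$, a $2\times2$ block built from $\Re p_4(c_2)$ and $\Im p_4(c_2)$; the Gram determinant then differs from $\prod_{j=1}^{3}p_4(c_j)$ by the square of the (purely imaginary) Jacobian of this substitution, a factor governed by $\operatorname{disc}(q_3)$, which is negative exactly when a conjugate pair occurs. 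The heart of the proof is therefore to compute this $2\times2$ block, determine its contribution to the sign, and confirm that the resulting real Gram determinant still tracks hyperbolicity/ellipticity as predicted by Proposition \ref{sign_rule}; in effect one must check that ``a Gram determinant of $\beta$'' is to be read through the evaluation coordinates, i.e. as $N_{E/\R}(p_4)$, so that its sign coincides uniformly with that of $\operatorname{Res}(p_4,q_3)$ in both the totally real and the conjugate-pair cases. With this sign reconciliation in hand, the corollary follows by combining it with the preceding resultant rule.
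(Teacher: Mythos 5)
Your reduction to the resultant corollary is the natural plan, and it is in effect what the paper does: the paper offers no argument for this corollary at all, treating it as an immediate reformulation of the preceding one via the identification of the sign of the determinant with the sign of $\operatorname{Res}(p_4,q_3)$. Your diagonalization in evaluation coordinates and your treatment of the totally real case are correct. The problem is precisely the step you defer, namely the ``confirmation'' that in the conjugate-pair case the Gram determinant taken in a genuine real basis of $V$ still has the sign of $\operatorname{Res}(p_4,q_3)$: if you carry out the $2\times2$ computation you postpone, it shows the opposite. With $c_1\in\PP^1_\R$, $c_3=\bar c_2$, $p_4(c_2)=a+bi$, and the real basis $e_1=\delta_1$, $e_2=\delta_2+\delta_3$, $e_3=i(\delta_2-\delta_3)$ of $V$ (where $\delta_j$ is the function equal to $1$ at $c_j$ and $0$ at the other two roots), the Gram matrix of $\beta$ is
\begin{equation*}
\begin{pmatrix} p_4(c_1)&0&0\\ 0&2a&-2b\\ 0&-2b&-2a\end{pmatrix},
\qquad
\det=-4\bigl(a^2+b^2\bigr)p_4(c_1)=-4\,\lvert p_4(c_2)\rvert^2\,p_4(c_1),
\end{equation*}
in agreement with your own remark: the base change from $(\delta_1,\delta_2,\delta_3)$ to $(e_1,e_2,e_3)$ has determinant $-2i$ and so contributes $(-2i)^2=-4<0$. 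Since $\operatorname{Res}(p_4,q_3)$ has the sign of $p_4(c_1)\lvert p_4(c_2)\rvert^2$, the real-basis Gram determinant has the \emph{opposite} sign to the resultant whenever $q_3$ has imaginary roots. So the statement you propose to confirm is false: a tritangent with one real tangency point and $p_4(c_1)<0$ is elliptic by Proposition~\ref{sign_rule}, yet its real-basis Gram determinant is positive.

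Moreover, no additional care can repair this within your stated goal, because hyperbolicity is not an invariant of the real quadratic form $(V,\beta)$ at all: the signature-$(1,2)$ form, which has positive determinant, arises both from a totally real triple of tangencies with signs $(+,-,-)$, which gives a hyperbolic tritangent, and from a conjugate pair with $p_4(c_1)<0$, which gives an elliptic one. Consequently the corollary is correct only under the reading you float in your final sentence: ``a Gram determinant of $\beta$'' must mean the determinant of the Gram matrix in the evaluation basis $\delta_1,\delta_2,\delta_3$ of $V\otimes\C$, that is $\prod_{j}p_4(c_j)=N_{E/\R}(p_4)$; equivalently, the real-basis determinant must be corrected by the factor $\operatorname{sign}(\operatorname{disc}q_3)$. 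Under that reading your first paragraph already constitutes the entire proof, the corollary being then word for word the resultant corollary. Your write-up should therefore assert this interpretation as its conclusion, and flag the sign discrepancy for genuine real bases, rather than leave it as a pending verification which, performed honestly, refutes rather than confirms the statement as literally phrased.
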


\section{A few other directions}\label{directions}

\subsection{Signed count of 6-tangent conics to a symmetric plane sextic}\label{6-tangents}
Consider a non-singular plane sextic $S\subset \PP^2$ which is invariant under the symmetry $s:\PP^2\to \PP^2$, $[x_0:x_1:x_2]\mapsto[x_0:x_1:-x_2]$,
and transversal to the line $x_2=0$.
Then it is easy to check that:
\begin{itemize}\item
$S$ does not pass through the center-point $[0:0:1]$,
\item
the quotient of $\PP^2$ by $s$ is identified with $Q=\PP(1,1,2)$ (making $y=x_2^2$ a coordinate for $Q$ of weight 2),
\item
the image of $S$ by the quotient map $\psi:\PP^2\to Q$ is a non-singular sextic $C\subset Q$,
which does not pass through the vertex $\v_2\in Q$
and is transversal to the conic $\{y=0\}\cap Q$.
\end{itemize}
Furthermore, every $s$-symmetric conic $B$ which is 6-tangent to $S$ is non-singular and its image $A=\psi(B)$ is
a conic section tritangent to $C$. This correspondence gives a
{\it bijection between the set of tritangents to $C$ and the set of $s$-symmetric conics 6-tangent to $S$.}
All these properties hold over $\R$ as well, which, in particular, gives a bijection between the set of real tritangents to $C$ and the set of real $s$-symmetric
6-tangent conics to $S$.
This bijection yields a splitting
of the latter conics into hyperbolic and elliptic.

\begin{theorem}\label{th-6tangents}
If $S\subset \PP^2$ is a generic real $s$-symmetric non-singular sextic,
then
\begin{enumerate}\item
Every conic 6-tangent to $S$ is $s$-symmetric.
\item
The number of hyperbolic 6-tangent conics is greater by 8 than the number of elliptic ones: $h-e=8$.
\end{enumerate}
\end{theorem}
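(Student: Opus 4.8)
The plan is to reduce Theorem \ref{th-6tangents} entirely to the tritangent count already established in Theorem \ref{th-tritangents} by exploiting the bijection described just before the statement. The setup identifies the quotient $\psi:\PP^2\to Q=\PP(1,1,2)$ by the symmetry $s$, under which an $s$-symmetric sextic $S$ descends to a non-singular sextic $C\subset Q$ avoiding the vertex $\v_2$, and under which $s$-symmetric $6$-tangent conics to $S$ correspond bijectively to tritangent sections of $C$. The $s$-symmetric sextic $S$ is precisely the branching curve for the double covering $\PP^2\to Q$, so this is the same double-cover picture as in Subsection \ref{del_Pezzo}, only with $\PP^2$ in place of the del Pezzo surface $X$. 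Thus the two parts of the theorem should follow from, respectively, a transversality/genericity argument and a direct application of Theorem \ref{th-tritangents}.

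For part (1), the first thing I would do is argue that for a generic real $s$-symmetric sextic $S$, \emph{every} real conic $6$-tangent to $S$ is automatically $s$-symmetric. The idea is a dimension count combined with the observation that the symmetry $s$ acts on the finite set of $6$-tangent conics (since $S$ is $s$-invariant, $s$ carries $6$-tangent conics to $6$-tangent conics). A conic $B$ and its image $s(B)$ are either equal or distinct; I would show that the locus of symmetric sextics admitting a non-symmetric $6$-tangent conic is of positive codimension in the space of all $s$-symmetric sextics, so that a generic $S$ has only symmetric ones. Equivalently, since the number of $6$-tangent conics is constant (finite and deformation-invariant for generic $S$) and all of them come in $s$-orbits, a direct enumeration at one conveniently chosen generic symmetric sextic showing that the count matches the number of symmetric ones would also suffice.

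For part (2), once part (1) is in hand the bijection is exhaustive: the real $6$-tangent conics to $S$ are all $s$-symmetric and hence correspond one-to-one with the real tritangent sections of $C$ in $T_\R(Q^+,C)\cup T_\R(Q^-,C)$, preserving the hyperbolic/elliptic splitting by construction. Theorem \ref{th-tritangents} then gives immediately that the difference between the numbers of hyperbolic and elliptic conics equals the difference between the numbers of hyperbolic and elliptic tritangents, which is $8$. So the heart of the argument is transporting the invariant $h-e=8$ across the bijection.

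I expect the main obstacle to be part (1), namely the genericity statement that no \emph{non}-symmetric $6$-tangent conic survives. Verifying that the $s$-action on the $6$-tangent conics has no fixed points \emph{other than} the symmetric ones, and that the non-symmetric orbits genuinely disappear for generic $S$ (rather than persisting as pairs), requires care: one must rule out that a generic symmetric $S$ could still carry conics $B\ne s(B)$ that are $6$-tangent. I would handle this by an incidence-variety dimension count, showing the non-symmetric $6$-tangency condition cuts out a proper subvariety in the parameter space of $s$-symmetric sextics, so that its real points fail to fill an open set; the total count of $6$-tangent conics to a generic plane sextic being a fixed finite number, matching the number of symmetric ones produced by the bijection, closes the argument.
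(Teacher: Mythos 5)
Your part (2) is exactly the paper's: once (1) is known, the $s$-equivariant and $\conj$-equivariant bijection between symmetric $6$-tangent conics and tritangent sections of $C\subset Q$ transports the count of Theorem \ref{th-tritangents} (equivalently, Theorem \ref{main-2}) to the conics, the hyperbolic/elliptic labels agreeing by definition. Where you genuinely diverge is part (1). The paper does \emph{not} argue by incidence dimensions: it passes to the $K3$ double cover $Y\to\PP^2$ branched along $S$, lifts $s$ to an involution $\til s$ acting by $-1$ on $H^{2,0}(Y)$, observes that a non-symmetric $6$-tangent conic lifts to $(-2)$-classes that cannot be $\til s_*$-invariant (if $\til s_*[\til B]=[\til B]$ then $-2=[\til B]^2=[\til B]\cdot[\til s(\til B)]\ge 0$, a contradiction), and then invokes surjectivity of the period map to arrange that, for generic symmetric $S$, only $\til s_*$-invariant classes are algebraic; so non-symmetric conics disappear. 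Your incidence-variety count is a more elementary alternative, and it can in fact be pushed through because here the auxiliary curves form a \emph{bounded} family: symmetric sextics form an affine space of dimension $16$; for a fixed smooth non-symmetric conic $B\cong\PP^1$, restriction maps this space onto a hyperplane $W$ of the $13$-dimensional space $H^0(\PP^1,\Cal O(12))$ (the kernel, sextics divisible by $B\cdot s(B)$, has dimension $4$); $6$-tangency means landing in the $7$-dimensional cone $\Sigma$ of squares; since $\Sigma$ is irreducible and spans the whole space (polarization: $2fg=(f+g)^2-f^2-g^2$), it is not contained in $W$, so $\Sigma\cap W$ has pure dimension $6$, each fiber of the incidence variety has dimension $10$, and the incidence variety has dimension at most $10+5=15<16$; its image is therefore a proper subvariety, which the real points of the space of real symmetric sextics cannot fill. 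Some such computation is indispensable: as written, your part (1) defers precisely the step that constitutes the proof, so the proposal is a plan rather than an argument.

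Your fallback route for (1) must moreover be discarded. The premise that ``the total count of $6$-tangent conics to a generic plane sextic is a fixed finite number'' is false: for a generic (non-symmetric) plane sextic the double cover is a $K3$ surface of Picard number $1$, and there are \emph{no} $6$-tangent conics at all; such conics exist only along Noether--Lefschetz-type loci, of which the family of $s$-symmetric sextics is one. Even within the symmetric family, finiteness and constancy of the number of \emph{all} (not a priori symmetric) $6$-tangent conics is exactly what part (1) asserts, so assuming it is circular. This jumping behavior is precisely what the paper's period-map argument --- or your dimension count, once actually carried out --- is designed to control.
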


\begin{proof} Part (1) is a consequence of the surjectivity of the period map for $K3$-surfaces
applied to the double covering $Y$ of the plane ramified in the sextic under consideration.
Namely,
$s$ lifts to a pair of involutions
on $Y$ commuting with the deck transformation of the covering $Y\to \PP^2$.  One of these involutions, denoted $\til s$,
fixes pointwise the pull-back of the infinity line $x_2=0$ (fixed by $s$) and
acts as multiplication by $-1$ on $H^{2,0}(Y)$.
Each 6-tangent conic $B$ lifts to a pair of $(-2)$-curves $\til B_1, \til B_2$.
If $B$ is $s$-invariant, then each of $\til B_i$ is invariant under $\til s$.
But if $B$ is not $s$-invariant, then the classes
$[\til B_i]\in H_2(Y)$ are not $\til s_*$-invariant,
because $\til s_*([\til B_i])=[\til B_i]$ would imply a contradiction:
$$-2= [\til B_i]^2=[\til B_i]\cdot \til s_*[\til B_i]= [\til B_i]\cdot [\til s_*(\til B_i)]
\ge 0.$$
Thus, if $B$ is not $s$-invariant
it has to disappear under a generic small $s$-invariant perturbation of $S$
due to  surjectivity of the period map.

Claim (2) follows immediately from claim (1) and Theorem \ref{main-2}.
\end{proof}

Using the same bijection in combination with
a separate count of hyperbolic and elliptic tritangents, one can also
perform a separate count for 6-tangent conics. For example,
it is not difficult to check that if $S\subset P^2$ as in Theorem \ref{th-6tangents} is an $M$- or $(M-1)$-curve, then the corresponding $C\subset Q$ is an M-curve and, thus, in all these cases, in accordance with Table \ref{tritangents-table},
the numbers of hyperbolic and elliptic 6-tangent to $S$ conics are 64 and 56 respectively.

For a list of deformation classes of real non-singular plane curves of degree 6 containing symmetric curves, we address an interested reader to \cite{II}. For information available on arrangements of symmetric curves in higher degrees, one may look
at \cite{Br} and \cite{Tr}.

\subsection{Signed count of real lines on nodal surfaces and wall-crossing}\label{wall-crossing}
The methods of this paper can be extended,
almost literally, to {\it nodal weak del Pezzo surfaces of degree 1},
that is to nodal surfaces $X$ defined in
$\PP(1,1,2,3)$ by equation of the same form (\ref{weighted-eq}) as in non-singular case.
Then, following, for example, the definition given in Remark \ref{normalization}
we may introduce a natural $\Pin^-$-structure on the smooth part of $X_\R$ with the properties like in Theorem \ref{main-1}
and get as a direct analog of Theorem \ref{main-2} a signed count of real lines not passing through the nodes.
If all the nodes of $X$ are real
and $L^n_\R(X^\pm)$ refers to the set of real lines not passing through the nodes, we conclude that
$$
\sum_{l\in L^n_\R(X^-)\cup L^n_\R(X^+)}s(l)=2\rk (H_2(X)\cap (K_X, e_1,\dots, e_k)^\perp)= 16-2k
$$
where $e_1,\dots, e_k$ stands for the roots represented in $H_2(X)$  by the $(-2)$-curves
corresponding to the nodes.
Respectively,
\begin{itemize}\item
{\it for a  real sextic $C\subset Q$ having $k$ real nodes and no other singular points, one obtains $8-k$ as the
signed count of real sections tritangent to $C$ and passing neither through its nodes nor through the vertex of $Q$.}
\end{itemize}
The latter observation, combined with the idea in Subsection \ref{6-tangents}, shows that

\begin{itemize}\item
{\it for a generic real affine $k$-nodal cubic curve $E\subset\C^2$,
a signed count of non-singular real conics tritangent to $E$ and symmetric with respect to the origin
gives $4-k$.}
\end{itemize}

In a similar vein,
let us consider a generic one parametric family  $X^t$, $\vert t\vert<\epsilon$, of real del Pezzo surfaces of degree 1 degenerating
to a uninodal real del Pezzo surface $X^0$. Then,
as $t$ is approaching $0$ from the side where the Euler characteristic of $X^t_{\R}$
is smaller, a certain number of real lines in $X_t$
merge pairwise to form {\it double lines} in $X^0_\R$ and
turn into pairs of imaginary lines as $t$ crosses $0$ ({\it cf.} \cite{IKS}).
Among these double lines all except one project on $Q$ to conics not passing through the vertex of $Q$ (but passing through the node of $C_0$).
They are resulted from merging of lines of opposite sign, hyperbolic and elliptic. The exceptional double line projects on $Q$
to a real line passing through the node of $C$ and the vertex of the cone.
This double line results from merging of two hyperbolic lines (see Figure \ref{wall-cross}).
It is the only one which, after $t$ crosses $0$, splits into a pair of lines which are conjugate imaginary in $X_t$, but real with respect to the Bertini dual real structure.
\begin{figure}[h!]
\vskip-2mm
\caption{Degeneration of a tritangent section into a double generating line of the cone}\label{wall-cross}
\includegraphics[width=0.7\textwidth]{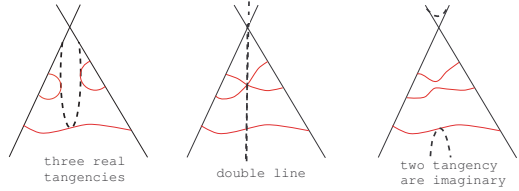}
\end{figure}

This peculiar wall-crossing behavior of real lines under nodal degenerations gives an alternative explanation of
the rule of combined conservation of the signed count of real lines in Bertini pairs as it is presented by Theorem \ref{main-2}.

\end{document}